\documentclass[twoside, a4paper, 12pt]{amsart}
\usepackage{amsaddr}
\usepackage{fullpage}
\usepackage{amsfonts}
\usepackage{amssymb}
\usepackage{enumitem}
\setlength{\parskip}{0em}
\setlength{\parindent}{1em}
\pagestyle{plain}

\theoremstyle{plain}
\newtheorem*{claim*}{Claim}
\newtheorem{thm}{Theorem}[section]
\newtheorem{cor}[thm]{Corollary}
\newtheorem{lemma}[thm]{Lemma}
\newtheorem{prop}[thm]{Proposition}
\theoremstyle{definition}

\newtheorem{ex}[thm]{Example}

\newtheorem{rem}[thm]{Remark}
\newtheorem{con}[thm]{Construction}

\newcommand{\ar}{\mathcal{R}}
\newcommand{\el}{\mathcal{L}}
\newcommand{\h}{\mathcal{H}}
\newcommand{\dee}{\mathcal{D}}
\newcommand{\jay}{\mathcal{J}}
\newcommand{\K}{\mathcal{K}}

\begin{document}
\title{\large{Ascending chain conditions on right ideals of semigroups}}
\author{Craig Miller}
\address{Department of Mathematics, University of York, YO10 5DD, UK\\
craig.miller@york.ac.uk}
\maketitle
\vspace{-1em}
\begin{abstract}
We call a semigroup $S$ {\em right noetherian} if it satisfies the ascending chain condition on right ideals, and we say that $S$ {\em satisfies ACCPR} if it satisfies the ascending chain condition on {\em principal} right ideals.  We investigate the behavior of these two conditions with respect to ideals and ideal extensions, with a particular focus on minimal and 0-minimal one-sided ideals.  In particular, we show that the property of satisfying ACCPR is inherited by right and left ideals.  On the other hand, we exhibit an example of a right noetherian semigroup with a minimal ideal that is not right noetherian.
\end{abstract}
~\\
\textit{Keywords}: Semigroup, one-sided ideal, ascending chain condition, semigroup act.\\
\textit{Mathematics Subject Classification 2010}: 20M10, 20M12.
\vspace{1em}

\section{Introduction\nopunct}
\label{sec:intro}

A {\em finiteness condition} for a class of universal algebras is a property that is satisfied by at least all finite members of that class.  Ascending chain conditions are classic examples.  
A poset $P$ satisfies the {\em ascending chain condition} if every ascending chain $$a_1\leq a_2\leq\cdots$$ eventually stabilises.  
Ascending chain conditions on ideals of rings, introduced by Noether in the landmark paper \cite{Noether:1921}, have played a crucial role in the development of ring theory, appearing in major results such as Hilbert's basis theorem, Krull's height theorem and the Noether-Lasker theorem.  Analagous conditions naturally arise in semigroup theory.  A {\em right ideal} of a semigroup $S$ is a subset $I\subseteq S$ such that $IS\subseteq I.$  We call $S$ {\em right noetherian} if its poset of right ideals (under containment) satisfies the ascending chain condition, and we say that $S$ {\em satisfies ACCPR} if its poset of principal right ideals satisfies the ascending chain condition.  Right noetherian semigroups have received a significant amount of attention; see for instance \cite{Aubert:1964, Davvaz:2020, Jespers:1999, Miller:2021, Sat:1972}.  Semigroups satisfying ACCPR have been considered in \cite{Liu:2004, Mazurek:2009, Stopar:2012}.

A related semigroup finiteness condition arises from the notion of a {\em right congruence}, that is, an equivalence relation $\rho$ on a semigroup $S$ such that $(a, b)\in\rho$ implies $(ac, bc)\in\rho$ for all $a, b, c\in S$.  We call a semigroup {\em strongly right noetherian} if its poset of right congruences satisfies the ascending chain condition\footnote{Strongly right noetherian semigroups are also known in the literature as `right noetherian', and the term `weakly right noetherian' has been used to denote semigroups that satisfy the ascending chain condition on right ideals.}.  The study of such semigroups was initiated by Hotzel in \cite{Hotzel:1975}, and further developed in \cite{Kozhukhov:1980, Kozhukhov:2003, Miller:2020}.  As the name suggests, strongly right noetherian semigroups are right noetherian \cite[Lemma 2.7]{Miller:2020}.  The converse, however, does not hold.  Indeed, unlike the situation for rings, the lattice of right ideals of a semigroup is not in general isomorphic to the lattice of right congruences.  For example, the lattice of right ideals of a group is trivial, whereas its lattice of right congruences is isomorphic to its lattice of subgroups.  Consequently, every group is trivially right noetherian, but a group is strongly right noetherian if and only if it satisfies the ascending chain condition on subgroups.

For any finiteness condition, it is natural to investigate the behavior of the condition with respect to substructures, quotients and extensions.  In particular, for a semigroup finiteness condition $\mathcal{P},$ the following questions arise.  For a semigroup $S$ and an ideal $I$ of $S$:
\begin{enumerate}
\item does $I$ satisfy $\mathcal{P}$ if $S$ satisfies $\mathcal{P}$?
\item does the Rees quotient $S/I$ satisfy $\mathcal{P}$ if $S$ satisfies $\mathcal{P}$?
\item does $S$ satisfy $\mathcal{P}$ if both $I$ and $S/I$ satisfy $\mathcal{P}$?
\end{enumerate}
The purpose of this article is to study the finiteness conditions of satisfying ACCPR and of being right noetherian, with (1)-(3) as our guiding questions.

The paper is organised as follows.  In Section \ref{sec:prelim} we provide the necessary preliminary material.  In particular, we collect some known results regarding the properties of satisfying ACCPR and being right noetherian, including some equivalent formulations of these conditions.  The main results of the paper are contained in Sections \ref{sec:ACCPR} and \ref{sec:rn}.  In Section \ref{sec:ACCPR} we consider the property of satisfying ACCPR, while Section \ref{sec:rn} is concerned with the property of being right noetherian.  These two sections follow the same format: they split into two subsections, the first concerning ideals in general and the second focusing on (0-)minimal ideals.

\section{Preliminaries\nopunct}
\label{sec:prelim}

\subsection{Ideals and related concepts}

Let $S$ be a semigroup.  We denote by $S^1$ the monoid obtained from $S$ by adjoining an identity if necessary (if $S$ is already a monoid, then $S^1=S$).  Similarly, we denote by $S^0$ the monoid obtained from $S$ by adjoining a zero if necessary.

Recall that a right ideal of $S$ is a subset $I\subseteq S$ such that $IS\subseteq I,$ and the principal right ideals of $S$ are those of the form $aS^1,$ $a\in S.$  Dually, we have the notion of ({\em principal}) {\em left ideals}.  An {\em ideal} of $S$ is a set that is both a right ideal and left ideal of $S,$ and the {\em principal ideals} of $S$ are the sets $S^1aS^1,$ $a\in S$.
Principal (one-sided) ideals determine the five Green's relations on a semigroup.  Green's relation $\ar$ on $S$ is given by
$$a\,\ar\,b\Leftrightarrow aS^1=bS^1.$$
Green's relations $\el$ and $\jay$ are defined similarly, in terms of principal left ideals and principal ideals, respectively.  Green's relation $\h$ is defined as $\h=\ar\cap\el,$ and finally we have $\dee=\ar\circ\el\,(=\el\circ\ar=\el\vee\ar)$.
It is clear from the definitions that Green's relations are equivalence relations on $S.$  Moreover, $\ar$ is a right congruence on $S$ and $\el$ is a left congruence on $S.$ 

It is easy to see that the following inclusions between Green's relations hold: 
$$\h\subseteq\el,\, \h\subseteq\ar,\, \el\subseteq\dee,\, \ar\subseteq\dee,\, \dee\subseteq\jay.$$
It can be easily shown that every right (resp.\ left, two-sided) ideal is a union of $\ar$-classes (resp.\ $\el$-classes, $\jay$-classes).
A semigroup with no proper right (resp.\ left) ideals is called {\em right} (resp.\ {\em left}) {\em simple}.  A semigroup is {\em simple} if it has no proper ideals.  Clearly any right/left simple semigroup is simple.

A right (resp.\ left, two-sided) ideal $I$ of $S$ is said to be {\em minimal} if there is no right (resp.\ left, two-sided) ideal of $S$ properly contained in $I.$ 
It turns out that, considered as semigroups, minimal right (resp.\ left) ideals are right (resp.\ left) simple \cite[Theorem 2.4]{Clifford:1948}, and minimal ideals are simple \cite[Theorem 1.1]{Clifford:1948}.  A semigroup contains at most one minimal ideal but may possess multiple minimal right/left ideals.  The minimal ideal of a semigroup $S$ is also known as the {\em kernel}, and we denote it by $\K(S).$  If $S$ has a minimal right (resp.\ left) ideal, then $\K(S)$ exists and is equal to the union of all the minimal right (resp.\ left) ideals \cite[Theorem 2.1]{Clifford:1948}.
A {\em completely simple} semigroup is a simple semigroup that possesses both minimal right ideals and minimal left ideals.  A semigroup has both minimal right ideals and minimal left ideals if and only if it has a completely simple kernel \cite[Theorem 3.2]{Clifford:1948}.

For semigroups with zero, the theory of minimal ideals becomes trivial, so we require the notion of 0-minimality.  Suppose that $S$ has a zero element 0.  For convenience, we will usually just write the set $\{0\}$ as 0.  We say that $S$ is {\em right} (resp.\ {\em left}) {\em 0-simple} if $S^2\neq 0$ and $S$ contains no proper right (resp.\ left) ideals except 0, and $S$ is called {\em 0-simple} if $S^2\neq 0$ and 0 is its only proper ideal. 
A right (resp.\ left, two-sided) ideal $I$ of $S$ is said to be {\em 0-minimal} if 0 is the only proper right (resp.\ left, two-sided) ideal of $S$ contained in $I.$  A 0-minimal ideal $I$ of $S$ is either null or 0-simple \cite[Theorem 2.29]{Clifford:1961} (a semigroup $T$ is {\em null} if $T^2=0$).  If $I$ is a 0-minimal ideal of $S$ containing a 0-minimal right ideal of $S,$ then $I$ is the union of all the 0-minimal right ideals of $S$ contained in $I$ \cite[Theorem 2.33]{Clifford:1961}.  A {\em completely 0-simple} semigroup is a 0-simple semigroup that possesses both 0-minimal right ideals and 0-minimal left ideals.

For any 0-minimal right ideal $R$ of $S,$ since $R^2$ is a right ideal of $S$ contained in $R,$ it follows by 0-minimality that either $R$ is null or $R^2=R.$  Similarly, for any $a\in R$ we have either $aR=0$ or $aR=R.$  If $R$ is a 0-minimal right ideal such that $R^2=R,$ we say that $R$ is {\em globally idempotent}.
In contrast to the situation for 0-minimal two-sided ideals, globally idempotent 0-minimal right ideals need not be right 0-simple; see the remark immediately after Lemma 2.31 in \cite{Clifford:1961}.  

Let $R$ be a globally idempotent 0-minimal right ideal of $S.$  For any $s\in S,$ the set $sR$ is either 0 or a 0-minimal right ideal of $S$ \cite[Lemma 2.32]{Clifford:1961}.  Thus, the set $SR,$ the (two-sided) ideal of $S$ generated by $R,$ is a union of 0-minimal right ideals of $S.$  Let $A^R$ denote the union of $\{0\}$ and all the null 0-minimal right ideals of $S$ contained in $SR,$ and let $B^R$ denote the union of all the globally idempotent 0-minimal right ideals of $S$ contained in $SR.$  We call $A^R$ the {\em null part} of $SR,$ and $B^R$ the {\em globally idempotent part} of $SR.$  We note that $A^R$ may equal 0.  We provide a structure theorem describing $SR$ in terms of $A^R$ and $B^R$; in order to do so, we first recall a couple of definitions.\par
Let $S$ be a semigroup with 0 that is the union of subsemigroups $S_i$ $(i\in I).$  If $S_i\cap S_j=0$ for all $i, j\in I,$ $i\neq j,$ we say that $S$ is the {\em 0-disjoint union} of $S_i$ ($i\in I$).  If, additionally, $S_iS_j=0$ for all $i, j\in I,$ $i\neq j,$ we say that $S$ is the {\em 0-direct union} of $S_i$ ($i\in I$).

\begin{thm}\cite[Theorem 6.19]{Clifford:1967}
\label{thm:SR}
Let $S$ be a semigroup with a globally idempotent 0-minimal right ideal $R.$  Then:
\begin{enumerate}
\item $SR$ is a 0-disjoint union of $A^R$ and $B^R$;
\item $A^R$ is a null semigroup and an ideal of $S$;
\item $B^R$ is a 0-simple semigroup and a right ideal of $S$;
\item a subset of $B^R$ is a (0-minimal) right ideal of $B^R$ if and only if it is a (0-minimal) right ideal of $S.$
\end{enumerate}
\end{thm}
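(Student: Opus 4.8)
The plan hinges on pinning down the 0-minimal right ideals of $S$ that lie inside $SR$. First I would record that these are exactly the nonzero sets $sR$, $s\in S$: that each $sR$ is either $0$ or a 0-minimal right ideal is \cite[Lemma 2.32]{Clifford:1961}, and conversely, if $L\subseteq SR$ is a 0-minimal right ideal and $0\neq x\in L$, then $x\in s_0R$ for some $s_0\in S$, so 0-minimality forces $L=xS^1=s_0R$. Next I would prove the dichotomy: since $Rs\subseteq R$ we have $RsR\subseteq R^2\subseteq R$, hence $RsR\in\{0,R\}$ by 0-minimality of $R$; and because $(sR)(sR)=s(RsR)$, a nonzero $sR$ is globally idempotent precisely when $RsR=R$ and null precisely when $RsR=0$. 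Part~(1) then drops out: every 0-minimal right ideal in $SR$ is null or globally idempotent, distinct 0-minimal right ideals meet in $0$, none is both null and globally idempotent, and $A^R,B^R$ are unions of right ideals of $S$, hence right ideals of $S$ (so subsemigroups); thus $SR=A^R\cup B^R$ is a 0-disjoint union.

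Part~(2) is a short computation with the dichotomy. If $a,b\in A^R$ are nonzero, say $a\in uR$, $b\in vR$ with $RuR=RvR=0$, then $ab\in u(RvR)=0$, so $A^R$ is null; and if $a\in uR$ with $RuR=0$ and $s\in S$, then $sa\in(su)R$ and $R(su)R=(Rs)uR\subseteq RuR=0$, so $(su)R$ is null or $0$ and $sa\in A^R$, whence $A^R$ is a two-sided ideal of $S$. For part~(3): $R=R^2\subseteq SR$ is a globally idempotent 0-minimal right ideal, so $R\subseteq B^R$ and $(B^R)^2\supseteq R^2=R\neq 0$; and $B^R$, being a union of right ideals of $S$, is itself a right ideal of $S$ and a subsemigroup. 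The 0-simplicity of $B^R$ I would deduce from part~(4).

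Part~(4) is where the real work is, and the crux is the claim that, for every nonzero $x\in B^R$, $xB^R=L$, where $L$ is the globally idempotent 0-minimal right ideal of $S$ containing $x$. To get this, pick $0\neq x'\in L=L^2$ and write $x'=ab$ with $a,b\in L$; from $0\neq ab\in aL\subseteq aS\subseteq L$ and 0-minimality, $aL=L$. Since $a\in L=xS^1$, either $a=x$, so that $xL=L\neq 0$, or $a=xs$ for some $s\in S$, so that $x(sL)=L\neq 0$. In the latter case $sL=(st)R$ (writing $L=tR$) is a nonzero 0-minimal right ideal of $S$, and it must be globally idempotent: if it were null then it would be contained in $A^R$, whence $(sL)\,x\,(sL)\subseteq A^RB^RA^R\subseteq A^RA^R=0$ (using that $A^R$ is a null ideal of $S$ by part~(2)), forcing $L^2=x\bigl((sL)\,x\,(sL)\bigr)=0$, a contradiction. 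Hence in both cases $xY=L\neq 0$ for some $Y\subseteq B^R$, so $xB^R\neq 0$; and as $xB^R$ is a right ideal of $S$ with $xB^R\subseteq xS\subseteq L$, 0-minimality gives $xB^R=L$. Consequently the right ideal of $B^R$ generated by a nonzero $x$ equals $xS^1=L$, the right ideal of $S$ it generates. It follows that every right ideal of $B^R$ is a union of 0-minimal right ideals of $S$, hence a right ideal of $S$; conversely every right ideal of $S$ contained in $B^R$ is trivially a right ideal of $B^R$; and the 0-minimal right ideals correspond on the two sides because a right ideal is 0-minimal exactly when it is generated by each of its nonzero elements. For the remaining assertion in~(3): a nonzero ideal $I$ of $B^R$ contains, by~(4), a 0-minimal right ideal $L=tR$ of $S$ with $RtR=R$, so $R=RtR=RL\subseteq B^RI\subseteq I$; then for every globally idempotent 0-minimal right ideal $L'=uR$ of $S$ inside $SR$ we get $L'=uR^2=L'R\subseteq B^RI\subseteq I$, whence $B^R\subseteq I$ and $I=B^R$. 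Thus $B^R$ is 0-simple.

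The hard part is establishing $xB^R\neq 0$. One cannot simply replace $B^R$ by $R$ there, since a globally idempotent 0-minimal right ideal need not be right 0-simple, and the case $a=xs$ genuinely requires the structural facts from~(1)--(2)---that $A^R$ is a null two-sided ideal---to exclude the degenerate alternative. This dictates the order of the argument: prove~(1), then~(2), then the ``right ideal'' half of~(3), then~(4), and finally the ``0-simple'' half of~(3).
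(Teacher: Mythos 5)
The paper does not actually prove this statement: it is quoted verbatim, with a citation, from Clifford and Preston (Theorem 6.19 of Volume 2), so there is no in-paper argument to compare yours against. Judged on its own, your proof is correct and self-contained, and it reconstructs the classical line of argument. The two pillars --- that the 0-minimal right ideals of $S$ contained in $SR$ are exactly the nonzero sets $sR$, and the dichotomy $RsR\in\{0,R\}$ governing whether a nonzero $sR$ is null or globally idempotent --- are the right reductions, and they make (1) and (2) routine. You are also right that the only delicate point is showing $xB^R\neq 0$ for $0\neq x\in B^R$ in part (4); your exclusion of the degenerate alternative (if $sL$ were null it would lie in $A^R$, so $(sL)x(sL)\subseteq A^RB^RA^R\subseteq (A^R)^2=0$ and hence $L=L^2=x\bigl((sL)x(sL)\bigr)=0$, a contradiction) is valid and correctly exploits that $A^R$ is a null two-sided ideal, which justifies the order in which you prove the parts. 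The identification $x(B^R)^1=xS^1=L$ then gives (4) cleanly, and the final deduction of 0-simplicity of $B^R$ --- from $R=RtR\subseteq I$ for any nonzero ideal $I$ of $B^R$, followed by $L'=L'R\subseteq B^RI\subseteq I$ for every globally idempotent $L'=uR$ --- also checks out. I see no gaps.
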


Let $S$ be a semigroup with 0.  The {\em right socle} of $S$ is the union of 0 and all the 0-minimal right ideals of $S.$  We denote the right socle by $\Sigma^r(S),$ or just $\Sigma^r$ when there is no ambiguity.  It turns out that $\Sigma^r$ is a (two-sided) ideal of $S$ \cite[Theorem 6.22]{Clifford:1967}.

Let $A^r=A^r(S)$ denote the union of 0 and all the null 0-minimal right ideals of $S,$ and let $B^r=B^r(S)$ denote the union of 0 and all the globally idempotent 0-minimal right ideals of $S.$  We call $A^r$ the {\em null part} of $\Sigma^r,$ and $B^r$ the {\em globally idempotent part} of $\Sigma^r.$  Of course, if $S$ has no 0-minimal right ideals then $\Sigma^r=A^r=B^r=0.$ 

\begin{thm}\cite[Theorem 6.23]{Clifford:1967}
\label{thm:rightsocle}
Let $S$ be a semigroup with 0.  Then:
\begin{enumerate}
\item $\Sigma^r$ is a 0-disjoint union of $A^r$ and $B^r$;
\item $A^r$ is a null semigroup and an ideal of $S$;
\item $B^r$ is a right ideal of $S$;
\item either $B^r=0$ or there exists a set $\{R_i : i\in I\}$ of globally idempotent 0-minimal right ideals of $S$ such that $B^r$ is the 0-direct union of the 0-simple semigroups $B^{R_i}$ $(i\in I).$
\end{enumerate}
\end{thm}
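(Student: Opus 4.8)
The plan is to derive all four parts from the structure theorem for the two-sided ideal $SR$ generated by a globally idempotent $0$-minimal right ideal $R$ (Theorem~\ref{thm:SR}), treating them in the order (3), (2), (1), (4). Part (3), and the fact that $A^r$ is a right ideal of $S$, are immediate, since each of $A^r$ and $B^r$ is a union of right ideals of $S$ together with $0$.

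To complete (2) I would show that $A^r$ is also a left ideal and that it is null. For the first, fix $s\in S$ and a null $0$-minimal right ideal $R$; given a nonzero right ideal $I$ of $S$ with $I\subseteq sR$, the set $\{r\in R:sr\in I\}$ is a nonzero right ideal of $S$ contained in $R$, hence all of $R$ by $0$-minimality, which forces $I=sR$, so $sR$ is $0$ or a $0$-minimal right ideal; and when it is a $0$-minimal right ideal it is null, since $Rs\subseteq R$ gives $(sR)^2=s(RsR)\subseteq sR^2=0$. Thus $sR\subseteq A^r$, and so $SA^r\subseteq A^r$. For nullity, let $x\in R$ and $y\in R'$ be nonzero with $R,R'$ null $0$-minimal right ideals; then $xy\in xR'$, which is a right ideal of $S$ contained in $R$, so $xR'$ is $0$ or $R$; but $xR'=R$ would force $x=xr'$ for some nonzero $r'\in R'$ and hence $x=x(r')^2=0$ (as $(r')^2\in(R')^2=0$), a contradiction, so $xy=0$ and $(A^r)^2=0$.

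Part (1) then follows: every $0$-minimal right ideal is null or globally idempotent, so $\Sigma^r=A^r\cup B^r$; $A^r$ and $B^r$ are subsemigroups by (2) and (3); and $A^r\cap B^r=0$, since a nonzero common element would lie in a null and in a globally idempotent $0$-minimal right ideal, forcing these to coincide. For (4) we assume $B^r\neq 0$, so that Theorem~\ref{thm:SR} applies to each globally idempotent $0$-minimal right ideal. Since $R\subseteq B^R\subseteq B^r$ for every such $R$, we have $B^r=\bigcup_R B^R$, and the key step is to show that $B^R\cap B^{R'}\neq 0$ implies $B^R=B^{R'}$.

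For the key step, take a nonzero $x\in B^R\cap B^{R'}$; it lies in a globally idempotent $0$-minimal right ideal $Q\subseteq SR$ and in one $Q'\subseteq SR'$, and $Q=Q'$ since distinct $0$-minimal right ideals meet in $0$. As $Q$ is a nonzero subset of the $0$-simple semigroup $B^R$ (Theorem~\ref{thm:SR}(3)), the ideal of $B^R$ generated by $Q$ is all of $B^R$; hence $R\subseteq B^R$ lies in the ideal of $S$ generated by $Q$, and since also $Q\subseteq SR$ that ideal equals $SR$ — symmetrically it equals $SR'$, so $SR=SR'$ and $B^R=B^{R'}$. Now choose one $R_i$ from each class of the equivalence relation $B^R=B^{R'}$ on globally idempotent $0$-minimal right ideals; then $B^r=\bigcup_i B^{R_i}$ with $B^{R_i}\cap B^{R_j}=0$ for $i\neq j$, each $B^{R_i}$ being $0$-simple by Theorem~\ref{thm:SR}(3). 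Finally, for $i\neq j$, since $B^{R_i}$ is a right ideal of $S$ and $SR_j$ an ideal of $S$ containing $B^{R_j}$, the product $B^{R_i}B^{R_j}$ lies in $B^{R_i}\cap SR_j$; and this is $0$, because $SR_j=A^{R_j}\cup B^{R_j}$ by Theorem~\ref{thm:SR}(1), $B^{R_i}\cap A^{R_j}\subseteq B^r\cap A^r=0$ by part (1), and $B^{R_i}\cap B^{R_j}=0$. Hence $B^r$ is the $0$-direct union of the $0$-simple semigroups $B^{R_i}$. I expect the main obstacle to be this key step — using the $0$-simplicity of $B^R$, and possibly the right-ideal correspondence of Theorem~\ref{thm:SR}(4), to show that $Q$ and $R$ generate the same ideal of $S$ — together with the bookkeeping of repeatedly passing between a $0$-minimal right ideal, the ideal it generates, and the globally idempotent part of that ideal.
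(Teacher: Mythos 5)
Your proof is correct; since the paper states this result only as a citation of Clifford and Preston (Theorem 6.23 of Volume 2), there is no internal proof to compare against, and your derivation of all four parts from Theorem \ref{thm:SR} is essentially the argument of that source. In particular, the key step — showing that two globally idempotent parts $B^{R}$ and $B^{R'}$ either coincide or meet in $0$ by identifying the ideal of $S$ generated by a common $0$-minimal right ideal $Q$ with both $SR$ and $SR'$ — is exactly the mechanism used there, and your verifications that $A^r$ is a null ideal and that $B^{R_i}B^{R_j}=0$ for $i\neq j$ are complete.
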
 

The above definitions and results regarding 0-minimal right ideals have obvious duals for 0-minimal left ideals, and we use analogous notation ($A^L,$ $B^L,$ $\Sigma^l(S),$ etc.).

Given an ideal $I$ of $S,$ the {\em Rees quotient} of $S$ by $I,$ denoted by $S/I,$  is the set $(S\!\setminus\!I)\cup\{0\}$ with multiplication given by
$$a\cdot b=\begin{cases}
ab&\text{ if }a, b, ab\in S\!\setminus\!I,\\
0&\text{ otherwise.}
\end{cases}$$
Let $J$ be a $\jay$-class of $S.$  The {\em principal factor} of $J$ is defined as follows.  If $J=\K(S)$ then its principal factor is itself.  Otherwise, the principal factor of $J$ is the Rees quotient of the principal ideal $S^1xS^1,$ where $x$ is any element of $J,$ by the ideal $(S^1xS^1)\!\setminus\!J.$
The {\em principal factors} of $S$ are the principal factors of its $\jay$-classes.  As mentioned above, if $\K(S)$ exists then it is simple; all other principal factors are either $0$-simple or null.

\subsection{Acts}

Semigroup acts play the analagous role in semigroup theory as that of modules in the theory of rings.  We provide some basic definitions about acts; one should consult \cite{Kilp:2000} for more information.\par
A ({\em right}) {\em $S$-act} is a non-empty set $A$ together with a map 
$$A\times S\to A, (a, s) \mapsto as$$
such that $a(st)=(as)t$ for all $a\in A$ and $s, t\in S.$  A subset $B$ of an $S$-act $A$ is a {\em subact} of $A$ if $bs\in B$ for all $b\in B$ and $s\in S.$
Note that $S$ itself is an $S$-act via right multiplication, and its subacts are precisely its right ideals.  For clarity, a right ideal $I$ of $S$ will be written as $I_S$ when we are viewing it as a subact (including the case $I=S$).\par
Given an $S$-act $A$ and a subact $B$ of $A,$ the {\em Rees quotient} of $A$ by $B,$ denoted by $A/B,$ is the $S$-act with universe $(A\!\setminus\!B)\cup\{0\}$ and action given by: for all $a\in A/B$ and $s\in S,$
$$a\cdot s=
\begin{cases}
as &\text{if }a, as\in A\!\setminus\!B,\\
0 &\text{otherwise.}
\end{cases}$$
A subset $X$ of an $S$-act $A$ is a {\em generating set} for $A$ if $A=XS^1,$ and $A$ is said to be {\em finitely generated} (resp.\ {\em principal}) if it has a finite (resp.\ one-element) generating set.  Thus, the principal right ideals of $S$ are precisely the principal subacts of $S_S$.

Note that when we speak of a right ideal $I$ of a semigroup $S$ being generated by a set $X,$ we mean that $X$ generates $I$ as an $S$-act, i.e.\ $I=XS^1.$  

We call an $S$-act $A$ is {\em noetherian} if the poset of subacts of $A$ (under containment) satisfies the ascending chain condition, and we say that $A$ {\em satisfies ACCP} if the poset of principal subacts satisfies the ascending chain condition.  In particular, the $S$-act $S_S$ is noetherian (resp.\ satisfies ACCP) if and only if $S$ is right noetherian (resp.\ satisfies ACCPR).

Given an $S$-act $A,$ we define an equivalence relation $\ar_S$ on $A$ by
$$a\,\ar_S\,b\Leftrightarrow aS^1=bS^1.$$
Notice that $\ar_S$ on the $S$-act $S_S$ coincides with Green's relation $\ar$ on $S.$  We denote the $\ar_S$-class of an element $a\in A$ by $R_a$.  There is a natural partial order $\leq$ on the set of $\ar_S$-classes of $A$ given by 
$$R_a\leq R_b\Leftrightarrow aS^1\subseteq bS^1.$$  
It is easy to see that the poset of $\ar_S$-classes is isomorphic to the poset of principal subacts of $A$ via the isomorphism $R_a\to aS^1.$

We call an $S$-act $A$ {\em simple} if it contains no proper subact.  If an $S$-act $A$ has a {\em zero} 0 (that is, $0s=0$ for all $s\in S$), we say that $A$ is {\em 0-simple} if $\{0\}$ is its only proper subact.  Notice that the simple subacts of $S_S$ are precisely the minimal right ideals of $S,$ and, if $S$ has a zero 0, the 0-simple subacts of $S_S$ are precisely the 0-minimal right ideals of $S.$

\subsection{Foundational results}

In this subsection we establish some foundational results, many of which will be required later in the paper.  Some of these results are folklore but we provide proofs for completeness.  We begin by presenting some equivalent characterisations of the property of satisfying ACCP.

\begin{prop}
\label{prop:maxcondition}
Let $S$ be a semigroup and let $A$ be an $S$-act.  Then the following are equivalent:
\begin{enumerate}
\item $A$ satisfies ACCP;
\item the poset of $\ar_S$-classes of $A$ satisfies the ascending chain condition;
\item every non-empty set of principal subacts of $A$ contains a maximal element.
\end{enumerate}
\end{prop}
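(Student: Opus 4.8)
The plan is to obtain the equivalence of (1) and (2) immediately from the poset isomorphism recorded just before the statement, and then to establish the cycle (1) $\Rightarrow$ (3) $\Rightarrow$ (1) by the standard ``ascending chain condition versus maximal condition'' argument.

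For (1) $\Leftrightarrow$ (2): as noted above, the assignment $R_a\mapsto aS^1$ is an isomorphism between the poset of $\ar_S$-classes of $A$ and the poset of principal subacts of $A.$ An order isomorphism carries ascending chains to ascending chains in both directions, so one of these posets satisfies the ascending chain condition if and only if the other does; and by definition $A$ satisfies ACCP precisely when the poset of principal subacts satisfies the ascending chain condition.

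For (3) $\Rightarrow$ (1): given an ascending chain $A_1\subseteq A_2\subseteq\cdots$ of principal subacts of $A,$ apply (3) to the set $\{A_n : n\in\mathbb{N}\}$ to obtain a maximal member $A_N$; since $A_N\subseteq A_m$ for every $m\geq N,$ maximality of $A_N$ forces $A_m=A_N$ for all such $m,$ so the chain terminates. For (1) $\Rightarrow$ (3), I would argue contrapositively: if $\Sigma$ is a non-empty set of principal subacts of $A$ with no maximal element, pick any $A_1\in\Sigma,$ and then, since $A_1$ is not maximal in $\Sigma,$ choose $A_2\in\Sigma$ with $A_1\subsetneq A_2$; iterating this choice recursively yields a strictly ascending chain $A_1\subsetneq A_2\subsetneq\cdots$ in $\Sigma,$ contradicting ACCP.

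I do not expect a genuine obstacle here: everything is a direct unwinding of the definitions, with the only point meriting a moment's care being that the recursive construction in the final step invokes the axiom of dependent choice.
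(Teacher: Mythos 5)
Your proposal is correct and follows essentially the same route as the paper: (1)$\Leftrightarrow$(2) via the poset isomorphism $R_a\mapsto aS^1$, (3)$\Rightarrow$(1) by extracting a maximal element from the chain, and (1)$\Rightarrow$(3) by recursively building a strictly ascending chain from a set with no maximal element. Your remark about dependent choice is a fair observation that the paper leaves implicit, but it does not change the substance of the argument.
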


\begin{proof}
(1)$\Leftrightarrow$(2) follows from the fact, established above, that the poset of $\ar_S$-classes of $A$ is isomorphic to the poset of principal subacts of $A.$

(1)$\Rightarrow$(3).  Suppose for a contradiction that there exists a non-empty set $\mathcal{F}$ of principal subacts of $A$ with no maximal element.  Pick any $a_1S^1\in\mathcal{F}.$  Since $a_1S^1$ is not maximal, there exists $a_2S^1\in\mathcal{F}$ such that $a_1S^1\subsetneq a_2S^1.$  Continuing in this way, we obtain an infinite ascending chain
$$a_1S^1\subsetneq a_2S^1\subsetneq\cdots$$
of principal subacts of $A,$ contradicting the fact that $A$ satisfies ACCP.

(3)$\Rightarrow$(1).  Consider an ascending chain 
$$a_1S^1\subseteq a_2S^1\subseteq\cdots$$
where $a_i\in A.$  By assumption, the set $\{a_iS^1 : i\in\mathbb{N}\}$ contains a maximal element, say $a_mS^1.$  Then we must have that $a_nS^1=a_mS^1$ for all $n\geq m.$  Thus $A$ satisfies ACCP.
\end{proof}

\begin{cor}
\label{cor:maxcondition}
The following are equivalent for a semigroup $S$:
\begin{enumerate}
\item $S$ satisfies ACCPR;
\item every non-empty set of principal right ideals of $S$ contains a maximal element;
\item the poset of $\ar$-classes of $S$ satisfies the ascending chain condition.
\end{enumerate}
\end{cor}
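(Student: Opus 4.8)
The plan is to deduce this corollary directly from Proposition \ref{prop:maxcondition} by specialising to the $S$-act $A=S_S$. Three translations, all of which have been recorded earlier in the excerpt, will do the work: first, $S_S$ satisfies ACCP if and only if $S$ satisfies ACCPR; second, the principal subacts of $S_S$ are precisely the principal right ideals of $S$; and third, the relation $\ar_S$ on $S_S$ coincides with Green's relation $\ar$ on $S$, so the poset of $\ar_S$-classes of $S_S$ is exactly the poset of $\ar$-classes of $S$.

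Concretely, I would take $A=S_S$ in Proposition \ref{prop:maxcondition}. Condition (1) of that proposition, ``$A$ satisfies ACCP'', becomes condition (1) of the corollary by the first translation. Condition (3) of the proposition, ``every non-empty set of principal subacts of $A$ contains a maximal element'', becomes condition (2) of the corollary by the second translation. Condition (2) of the proposition, ``the poset of $\ar_S$-classes of $A$ satisfies the ascending chain condition'', becomes condition (3) of the corollary by the third translation. Since Proposition \ref{prop:maxcondition} asserts the equivalence of its three conditions, the three conditions of the corollary are equivalent, completing the proof.

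There is essentially no obstacle here: the corollary is a routine specialisation of the proposition, and the only thing to be careful about is citing the three identifications between act-theoretic and semigroup-theoretic notions correctly. If one preferred a self-contained argument, one could instead just rerun the proof of Proposition \ref{prop:maxcondition} verbatim with $A$ replaced by $S_S$ throughout, but appealing to the proposition is cleaner.
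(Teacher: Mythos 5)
Your proposal is correct and is exactly the intended derivation: the paper states this as an immediate corollary of Proposition \ref{prop:maxcondition}, relying on precisely the three identifications you cite (principal subacts of $S_S$ are the principal right ideals, $\ar_S$ on $S_S$ is Green's $\ar$, and ACCP for $S_S$ is ACCPR for $S$). Nothing further is needed.
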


We now provide several equivalent formulations of the property of being noetherian for acts.  For this result, recall that an {\em antichain} of a poset is a subset consisting of pairwise incomparable elements.

\begin{thm}
\label{thm:principal}
Let $S$ be a semigroup and let $A$ be an $S$-act.  Then the following are equivalent:
\begin{enumerate}
\item $A$ is noetherian;
\item every subact of $A$ is finitely generated;
\item every non-empty set of subacts of $A$ contains a maximal element;
\item $A$ satisfies ACCP and contains no infinite antichain of principal subacts;
\item the poset of $\ar_S$-classes of $A$ satisfies the ascending chain condition and contains no infinite antichain.
\end{enumerate}
\end{thm}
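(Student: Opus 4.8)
The plan is to prove the cycle $(1)\Rightarrow(3)\Rightarrow(2)\Rightarrow(1)$ — these are the familiar Noetherian equivalences, transported essentially verbatim to the setting of acts — and then to absorb $(4)$ via $(1)\Rightarrow(4)\Rightarrow(2)$, obtaining $(4)\Leftrightarrow(5)$ almost for free at the end. For $(1)\Rightarrow(3)$: if a non-empty family of subacts had no maximal element, one could start from any member and repeatedly pass to a strictly larger one, building an infinite strictly ascending chain. For $(3)\Rightarrow(2)$: given a non-empty subact $B$, apply $(3)$ to the set of finitely generated subacts of $B$ (non-empty, since it contains every $bS^1$) to obtain a maximal one $C$; if $C\subsetneq B$, then for $b\in B\setminus C$ the subact $C\cup bS^1$ is finitely generated and strictly larger, a contradiction, so $B=C$. (The empty subact, if one allows it, is generated by $\emptyset$.) For $(2)\Rightarrow(1)$: the union of an ascending chain of subacts is a subact, hence finitely generated, and finitely many generators already lie in a single member of the chain, which therefore stabilises.

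For $(1)\Rightarrow(4)$, ACCP is immediate since principal subacts are subacts. If $\{c_iS^1:i\in\mathbb{N}\}$ were an infinite antichain of principal subacts, set $D_n=c_1S^1\cup\cdots\cup c_nS^1$; this is an ascending chain of subacts, and if $D_n=D_{n+1}$ then $c_{n+1}\in c_jS^1$ for some $j\le n$, forcing $c_{n+1}S^1\subseteq c_jS^1$ and contradicting incomparability — so the chain is strictly ascending, contradicting $(1)$. The equivalence $(4)\Leftrightarrow(5)$ is then immediate: the map $R_a\mapsto aS^1$ is an order isomorphism between the poset of $\ar_S$-classes of $A$ and the poset of principal subacts of $A$, so it matches ascending chains with ascending chains and antichains with antichains, and Proposition~\ref{prop:maxcondition} already records the ACCP half of the correspondence.

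The substantive step is $(4)\Rightarrow(2)$. Let $B$ be a non-empty subact of $A$. Since $A$ satisfies ACCP, Proposition~\ref{prop:maxcondition} gives that every non-empty set of principal subacts of $A$ has a maximal element; in particular, for each $b\in B$ the set $\{cS^1:c\in B,\ bS^1\subseteq cS^1\}$ has a maximal element, which is readily seen to be a maximal element of $\{cS^1:c\in B\}$ containing $bS^1$. Thus every principal subact of $B$ is contained in a maximal principal subact of $B$. Distinct maximal principal subacts of $B$ are pairwise incomparable, hence form an antichain of principal subacts of $A$; by the second hypothesis of $(4)$ there are only finitely many, say $m_1S^1,\dots,m_nS^1$. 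Finally, every $b\in B$ satisfies $bS^1\subseteq m_iS^1$ for some $i$, so $b\in m_iS^1$, giving $B=m_1S^1\cup\cdots\cup m_nS^1=\{m_1,\dots,m_n\}S^1$. I expect the one genuine obstacle to be exactly this argument — namely, extracting from ACCP the fact that every principal subact of $B$ lies beneath a \emph{maximal} one, so that the finite antichain of maximal principal subacts actually generates $B$; the rest is routine.
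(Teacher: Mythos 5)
Your proof is correct, and for the substantive implication it takes a genuinely different route from the paper. The paper cites the standard module-theoretic argument for $(1)\Leftrightarrow(2)\Leftrightarrow(3)$ (which you instead prove in full, harmlessly), proves $(1)\Rightarrow(4)$ exactly as you do, and then closes the loop via $(4)\Rightarrow(1)$ \emph{by contraposition}: from a strictly ascending chain $A_1\subsetneq A_2\subsetneq\cdots$ it picks witnesses $a_k\in A_k\setminus A_{k-1}$ and repeatedly extracts maximal elements from the tails $\{a_iS^1: i\geq k_j+1\}$ (using Proposition~\ref{prop:maxcondition}) to manufacture an infinite antichain of principal subacts. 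You instead prove $(4)\Rightarrow(2)$ directly: ACCP guarantees that every principal subact of a subact $B$ sits below a maximal principal subact of $B$, the maximal ones form an antichain and are therefore finitely many, and $B$ is exactly their union, hence finitely generated. Your argument is constructive and positive where the paper's is an infinite extraction by contradiction; it also yields slightly more information (a canonical finite generating set for every subact, namely representatives of the maximal $\ar_S$-classes it contains), and it avoids the repeated dependent choices of maximal elements. The paper's version, on the other hand, isolates the reusable combinatorial fact that ACC-without-infinite-antichains posets admit no infinite strictly ascending chains of down-sets, which is the shape of argument reused elsewhere (e.g.\ in Corollary~\ref{cor:principal}). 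All the individual steps you flag as delicate — that a maximal element of $\{cS^1: c\in B,\ bS^1\subseteq cS^1\}$ is maximal in $\{cS^1: c\in B\}$, and that distinct maximal principal subacts are incomparable — do check out.
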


\begin{proof}
The proof that (1), (2) and (3) are equivalent is essentially the same as that of the analogue for modules over rings; see \cite[Section 10.1]{Lang}.  $(4)\Leftrightarrow(5)$ follows from the fact that the poset of $\ar_S$-classes of $A$ is isomorphic to the poset of principal subacts of $A.$

$(1)\Rightarrow(4).$  Clearly $A$ satisfies ACCP.  Suppose for a contradiction that there exists an infinite antichain $\{a_iS^1 : i\in\mathbb{N}\}$ of principal subacts of $A.$  For each $n\in\mathbb{N},$ let $A_n$ be the subact $\{a_1, \dots, a_n\}S^1.$  Clearly $A_n\subseteq A_{n+1}$.  We cannot have $A_n=A_{n+1}$, for otherwise we would have $a_{n+1}\in a_iS^1$ for some $i\leq n,$ and hence $a_{n+1}S^1\subseteq a_iS^1,$ contradicting the fact that $a_iS^1$ and $a_nS^1$ are incomparable. 
Thus, we have an infinite strictly ascending chain $$A_1\subsetneq A_2\subsetneq\dots$$ of right ideals of $S,$ contradicting the assumption that $A$ is noetherian.

$(4)\Rightarrow(1).$  Suppose that $A$ is not noetherian but does satisfy ACCP.  We need to construct an infinite antichain of principal subacts of $A.$
Since $A$ is not noetherian, there exists an infinite strictly ascending chain
$$A_1\subsetneq A_2\subsetneq\cdots$$
of subacts of $A.$  Choose elements $a_1\in A_1$ and $a_k\in A_k\!\setminus\!A_{k-1}$ for $k\geq 2.$  Then certainly $a_kS^1$ is not contained in any $a_jS^1, j<k,$ since $a_jS^1\subseteq A_j$ and $a_k\in A_k\!\setminus\!A_j.$

Consider the infinite set $P_1=\{a_iS^1 : i\in\mathbb{N}\}$ of principal subacts of $A.$  Since $A$ satisfies ACCP, $P_1$ contains a maximal element, say $a_{k_1}S^1,$ by Proposition \ref{prop:maxcondition}.
Now consider the infinite set $P_2=\{a_iS^1 : i\geq k_1+1\}.$  Again, $P_2$ contains a maximal element, say $a_{k_2}S^1.$  Then $a_{k_2}S^1$ is not contained in $a_{k_1}S^1$ since $k_1<k_2,$ and $a_{k_1}S^1$ is not contained in $a_{k_2}S^1$ since $a_{k_1}$ is maximal in $P_1$. 
Similarly, the infinite set $P_3=\{a_iS^1 : i\geq k_2+1\}$ contains a maximal element, say $a_{k_3}S^1,$ and $a_{k_1}S^1, a_{k_2}S^1$ and $a_{k_3}S^1$ are pairwise incomparable.
Continuing this process ad infinitum, we obtain an infinite antichain $\{a_{k_i}S^1 : i\in\mathbb{N}\}$ of principal subacts of $A,$ as required.
\end{proof}

From Theorem \ref{thm:principal} we deduce a number of corollaries.

\begin{cor}\cite[Proposition 3.1 and Theorem 3.2]{Miller:2021}
\label{cor:principal}
The following are equivalent for a semigroup $S$:
\begin{enumerate}
\item $S$ is right noetherian;
\item every right ideal of $S$ is finitely generated;
\item every non-empty set of right ideals of $S$ contains a maximal element;
\item $S$ satisfies ACCPR and contains no infinite antichain of principal right ideals;
\item the poset of $\ar$-classes of $S$ satisfies the ascending chain condition and contains no infinite antichain.
\end{enumerate}
\end{cor}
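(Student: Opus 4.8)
The plan is to derive this corollary as the special case of Theorem \ref{thm:principal} obtained by taking the $S$-act to be $S_S$, translating each of the five conditions through the correspondences recorded in Section \ref{sec:prelim}.

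First I would list the relevant identifications for $A = S_S$. The subacts of $S_S$ are exactly the right ideals of $S$; a subact is finitely generated (resp.\ principal) precisely when the corresponding right ideal is finitely generated (resp.\ principal) as a right ideal, since ``generated'' for right ideals is understood in the act-theoretic sense $I = XS^1$; the relation $\ar_S$ on $S_S$ coincides with Green's relation $\ar$ on $S$, so the poset of $\ar_S$-classes of $S_S$ is the poset of $\ar$-classes of $S$; and, as already noted in the excerpt, $S_S$ is noetherian exactly when $S$ is right noetherian, and $S_S$ satisfies ACCP exactly when $S$ satisfies ACCPR.

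With these in hand, conditions (1)--(5) of the corollary are exactly conditions (1)--(5) of Theorem \ref{thm:principal} applied to $A = S_S$: ``$S_S$ is noetherian'', ``every subact of $S_S$ is finitely generated'', ``every non-empty set of subacts of $S_S$ contains a maximal element'', ``$S_S$ satisfies ACCP and contains no infinite antichain of principal subacts'', and ``the poset of $\ar_S$-classes of $S_S$ satisfies the ascending chain condition and contains no infinite antichain''. Hence the equivalence of (1)--(5) follows immediately from Theorem \ref{thm:principal}.

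I do not anticipate any real obstacle; the proof is a single invocation of Theorem \ref{thm:principal} once the dictionary above is in place. The one point meriting a sentence of care is confirming that the act-theoretic notion of ``finitely generated'' matches the intended notion of ``finitely generated right ideal'' — this is precisely the convention fixed in the preliminaries — so that condition (2) of the corollary genuinely corresponds to condition (2) of the theorem.
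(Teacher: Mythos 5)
Your proposal is correct and matches the paper exactly: the paper states this corollary as an immediate consequence of Theorem \ref{thm:principal} applied to the act $S_S$, using precisely the dictionary you describe (subacts of $S_S$ are right ideals, $\ar_S$ on $S_S$ is Green's relation $\ar$, and the noetherian/ACCP conditions for $S_S$ are by definition the right noetherian/ACCPR conditions for $S$). Nothing further is needed.
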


\begin{cor}
\label{cor:R_S-classes}
Let $S$ be a semigroup.  Any $S$-act $A$ with finitely many $\ar_S$-classes is noetherian.
\end{cor}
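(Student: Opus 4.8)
The plan is to reduce immediately to the characterisation of noetherian acts given in Theorem \ref{thm:principal}, specifically the equivalence $(1)\Leftrightarrow(5)$. So the strategy has two short steps: first observe that the hypothesis says precisely that the poset $\mathcal{P}$ of $\ar_S$-classes of $A$ is a \emph{finite} poset; second, verify that a finite poset automatically satisfies both conditions appearing in clause (5), namely the ascending chain condition and the absence of an infinite antichain.

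For the first of these, any ascending chain $R_1\leq R_2\leq\cdots$ in a finite poset can take only finitely many distinct values, so it must eventually be constant; hence $\mathcal{P}$ satisfies the ascending chain condition. For the second, an antichain is in particular a subset of $\mathcal{P}$, and since $\mathcal{P}$ is finite it has no infinite subsets at all, so in particular no infinite antichain. Thus $\mathcal{P}$ satisfies condition (5) of Theorem \ref{thm:principal}, and therefore $A$ is noetherian.

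I do not anticipate any real obstacle here: the result is a direct consequence of Theorem \ref{thm:principal} together with the elementary fact that finiteness of a poset trivially yields both the ascending chain condition and the non-existence of infinite antichains. The only point worth stating explicitly is the translation of the hypothesis ``$A$ has finitely many $\ar_S$-classes'' into ``the poset of $\ar_S$-classes is finite'', which is immediate from the definition of that poset.

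\begin{proof}
Since $A$ has only finitely many $\ar_S$-classes, its poset of $\ar_S$-classes is finite.  A finite poset clearly satisfies the ascending chain condition and contains no infinite antichain.  Hence, by Theorem \ref{thm:principal} (specifically, the equivalence of (1) and (5)), $A$ is noetherian.
\end{proof}
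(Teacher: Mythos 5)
Your proof is correct and matches the paper's intended derivation: the corollary is stated as an immediate consequence of Theorem \ref{thm:principal}, and the route via clause (5) — a finite poset trivially satisfies the ascending chain condition and has no infinite antichain — is exactly the argument the paper leaves implicit.
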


\begin{cor}
\label{cor:R-classes}
Any semigroup with finitely many $\ar$-classes is right noetherian.
\end{cor}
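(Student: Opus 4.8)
The plan is to derive this as an immediate consequence of the results already established; no genuinely new argument is needed. The key observation, recorded earlier, is that the subacts of the $S$-act $S_S$ are precisely the right ideals of $S$, and that $\ar_S$ on $S_S$ coincides with Green's relation $\ar$ on $S$. Hence the hypothesis ``$S$ has finitely many $\ar$-classes'' is literally the statement ``$S_S$ has finitely many $\ar_S$-classes'', and ``$S$ is right noetherian'' is literally the statement ``$S_S$ is noetherian''.

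Given this dictionary, I would simply apply Corollary \ref{cor:R_S-classes} to the act $A=S_S$: since $S_S$ has only finitely many $\ar_S$-classes, it is noetherian, which is exactly the conclusion that $S$ is right noetherian. If one prefers to avoid even this short detour through acts, one can instead invoke Corollary \ref{cor:principal} directly, via the equivalence of conditions (1) and (5): condition (5) requires that the poset of $\ar$-classes of $S$ satisfy the ascending chain condition and contain no infinite antichain, and both of these hold vacuously for a finite poset (a finite chain stabilises, and a finite set has no infinite subset at all). Either route gives the result in a single line.

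There is essentially no obstacle here; all the substantive work lies in Theorem \ref{thm:principal} and its Corollary \ref{cor:principal}, and the only thing to verify is the already-noted correspondence between the subact structure of $S_S$ and the right-ideal structure of $S$. The one point worth stating explicitly is that we are using the characterisation of noetherianity in terms of the poset of $\ar_S$-classes (condition (5) of Corollary \ref{cor:principal}) rather than attempting a direct chain argument on right ideals — the former makes the finiteness hypothesis bite immediately.
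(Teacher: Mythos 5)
Your proof is correct and matches the paper's intended argument exactly: the corollary is stated there as an immediate consequence of Theorem \ref{thm:principal} (via Corollary \ref{cor:R_S-classes} applied to $S_S$, or equivalently condition (5) of Corollary \ref{cor:principal}, since a finite poset trivially satisfies the ascending chain condition and has no infinite antichain). Nothing further is needed.
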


\begin{cor}
\label{cor:simplesubacts}
Let $S$ be a semigroup, and let $A$ be an $S$-act (with 0) that is the union of (0-)simple subacts $A_i, i\in I.$  Then $A$ satisfies ACCP.  Furthermore, $A$ is noetherian if and only if $I$ is finite.
\end{cor}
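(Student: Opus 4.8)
The plan is to compute the poset of $\ar_S$-classes of $A$ explicitly and then read off both assertions from Proposition~\ref{prop:maxcondition} and Theorem~\ref{thm:principal}. The key preliminary observation is that a (0-)simple subact is principal, generated by any one of its (non-zero) elements: if $B$ is a simple subact then for each $b\in B$ the set $bS^1$ is a non-empty subact of $B,$ whence $bS^1=B$; and if $B$ is a $0$-simple subact with zero $0,$ then for $b\in B\setminus\{0\}$ the subact $bS^1$ contains $b\neq 0$ and so cannot equal $\{0\},$ hence equals $B.$

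First suppose $A$ has no zero. Since $A=\bigcup_{i\in I}A_i,$ every $a\in A$ lies in some $A_i$ and hence $aS^1=A_i$; thus the principal subacts of $A$ are precisely the $A_i$ $(i\in I).$ Distinct simple subacts are disjoint --- if $A_i\cap A_j\neq\emptyset$ then this intersection is a subact contained in each of $A_i$ and $A_j,$ forcing $A_i=A_j$ --- and hence pairwise incomparable, so the poset of $\ar_S$-classes of $A$ is an antichain of size $|\{A_i:i\in I\}|.$ Now suppose instead that $A$ has a zero $0$; then each $A_i$ contains $0,$ distinct $A_i$ meet only in $\{0\},$ the $\ar_S$-class of $0$ is $\{0\}$ itself, and the principal subacts of $A$ are $\{0\}$ together with the $A_i.$ In this case the poset of $\ar_S$-classes consists of a least element $R_0$ together with the antichain $\{R_a : a\in A_i\setminus\{0\},\ i\in I\}$ lying above it.

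In either case the poset of $\ar_S$-classes of $A$ contains no chain with more than two elements, so it satisfies the ascending chain condition; hence $A$ satisfies ACCP by Proposition~\ref{prop:maxcondition}. For the second assertion, this poset contains an infinite antichain if and only if $\{A_i:i\in I\}$ is infinite, i.e.\ $I$ is infinite (the $A_i$ being understood pairwise distinct); so by Theorem~\ref{thm:principal} the act $A$ is noetherian if and only if $I$ is finite. When $I$ is finite one may instead simply note that $A$ has only finitely many $\ar_S$-classes and quote Corollary~\ref{cor:R_S-classes} directly.

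I expect no genuine obstacle here; the only point requiring a little care is the bookkeeping around the zero --- checking that each $A_i$ contains the zero of $A,$ that $\{0\}$ forms a single $\ar_S$-class lying strictly below every other one, and that a $0$-simple subact is strictly larger than $\{0\}$ --- after which both conclusions are immediate consequences of the foundational results already established.
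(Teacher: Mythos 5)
Your proof is correct and follows essentially the same route as the paper's: both identify the principal subacts of $A$ as the (0-)simple subacts $A_i$ (together with $\{0\}$ in the zero case), observe that these form an antichain, and then read off ACCP and the noetherian criterion from Proposition~\ref{prop:maxcondition} and Theorem~\ref{thm:principal}. The paper's version is just a two-line sketch of what you spell out in full, including the bookkeeping around the zero.
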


\begin{proof}
It is clear that $A$ satisfies ACCPR.  The (0-)simple subacts of $A$ are clearly principal and form an antichain (under containment), so the second statement follows from Corollary \ref{cor:principal}.
\end{proof}

\begin{cor}
\label{cor:minrightideals}
Let $S$ be a semigroup (with 0) that is the union of (0-)minimal right ideals $R_i, i\in I,$ of $S$.  Then $S$ satisfies ACCPR.  Furthermore, $S$ is right noetherian if and only if $I$ is finite.
\end{cor}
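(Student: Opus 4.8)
The plan is to deduce this immediately from Corollary~\ref{cor:simplesubacts} by taking the $S$-act $A$ to be $S_S$. Recall from the subsection on acts that the subacts of $S_S$ are precisely the right ideals of $S$, and that the (0-)simple subacts of $S_S$ are exactly the (0-)minimal right ideals of $S$. Hence the hypothesis that $S$ is the union of (0-)minimal right ideals $R_i$ $(i\in I)$ says precisely that the $S$-act $S_S$ is the union of the (0-)simple subacts $(R_i)_S$ $(i\in I)$.

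With this translation in place, I would first apply Corollary~\ref{cor:simplesubacts} to conclude that $S_S$ satisfies ACCP; since $S_S$ satisfies ACCP if and only if $S$ satisfies ACCPR, the first assertion follows. The same corollary then gives that $S_S$ is noetherian if and only if $I$ is finite, and since $S_S$ is noetherian if and only if $S$ is right noetherian, this yields the second assertion. In the case where $S$ has a zero, one should observe that every 0-minimal right ideal of $S$ contains $0$, so the subact $\{0\}$ of $S_S$ lies inside the union $\bigcup_{i\in I}(R_i)_S$ automatically, and no separate treatment of the zero is required.

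There is essentially no obstacle here, as the statement is a direct specialisation of Corollary~\ref{cor:simplesubacts} to $A=S_S$. The only point needing a little care — handled exactly as in the proof of Corollary~\ref{cor:simplesubacts} — is the ``only if'' direction of the second assertion: it uses the fact that distinct (0-)simple subacts are pairwise incomparable, so that infinitely many of them constitute an infinite antichain of principal subacts, which by Corollary~\ref{cor:principal}(4) rules out the noetherian property.
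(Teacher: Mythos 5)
Your proposal is correct and is exactly the derivation the paper intends: Corollary~\ref{cor:minrightideals} is stated without proof immediately after Corollary~\ref{cor:simplesubacts} precisely because it is the specialisation to $A=S_S$, using the identifications of subacts of $S_S$ with right ideals and of (0-)simple subacts with (0-)minimal right ideals. No gaps.
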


The next result states, for both the properties of being noetherian and satisfying ACCP, an act has the property if and only if both a subact and the associated Rees quotient do.

\begin{prop}
\label{prop:acts}
Let $S$ be a semigroup, let $A$ be an $S$-act, and let $B$ be a subact of $A.$  Then $A$ is noetherian (resp.\ satisfies ACCP) if and only if both $B$ and $A/B$ are noetherian (resp.\ satisfy ACCP).
\end{prop}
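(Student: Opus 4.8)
The plan is to prove both statements by a direct chain-chasing argument, handling the two cases (noetherian and ACCP) in parallel. The forward direction is the substantive one; the reverse direction is essentially bookkeeping about how subacts of $A$ restrict and project.

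First I would establish the two easy implications. Suppose $A$ is noetherian (resp.\ satisfies ACCP). Any subact $C$ of $B$ is a subact of $A$, and if $C$ is principal in $B$ it is principal in $A$; hence an ascending chain of (principal) subacts of $B$ is an ascending chain of (principal) subacts of $A$, which stabilises, so $B$ is noetherian (resp.\ satisfies ACCP). For $A/B$, I would use the standard correspondence: the subacts of $A/B$ are exactly the sets of the form $C/B$ where $C$ is a subact of $A$ with $B\subseteq C$, together with the zero subact, and this correspondence is inclusion-preserving. I would note that a principal subact $(a\cdot S^1)$ of $A/B$ with $a\neq 0$ lifts to the subact $aS^1\cup B$ of $A$ (which need not be principal, but is finitely generated if $B$ is, which suffices for the noetherian case); for the ACCP case I would instead observe directly that $aS^1$ in $A/B$ corresponds to the principal subact $aS^1$ of $A$ possibly together with whatever part of $B$ it hits, and that comparisons of such principal subacts in $A/B$ are controlled by comparisons of $aS^1$ in $A$. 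So an ascending chain in $A/B$ pulls back to an ascending chain in $A$ and stabilises. This direction requires a little care in the ACCP case, which I flag below as the main obstacle.

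For the converse, suppose both $B$ and $A/B$ are noetherian (resp.\ satisfy ACCP), and let
$$C_1\subseteq C_2\subseteq\cdots$$
be an ascending chain of subacts of $A$ (principal subacts, in the ACCP case — here I would generate the chain $C_i=c_iS^1$ from the given principal chain). Intersecting with $B$ gives an ascending chain $C_1\cap B\subseteq C_2\cap B\subseteq\cdots$ of subacts of $B$, which stabilises by hypothesis, say from index $m_1$ on. Applying the Rees-quotient correspondence, the images $(C_i\cup B)/B$ form an ascending chain of subacts of $A/B$, which stabilises, say from index $m_2$ on. For $i\geq m=\max(m_1,m_2)$ I claim $C_i=C_{i+1}$: given $x\in C_{i+1}$, either $x\in B$, in which case $x\in C_{i+1}\cap B=C_i\cap B\subseteq C_i$; or $x\notin B$, in which case the image of $x$ in $A/B$ lies in $(C_{i+1}\cup B)/B=(C_i\cup B)/B$, forcing $x\in C_i\cup B$, and since $x\notin B$ we get $x\in C_i$. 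Hence the chain in $A$ stabilises, proving $A$ noetherian (resp.\ that $A$ satisfies ACCP, since the $C_i$ were principal).

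The main obstacle, and the only place the argument is not purely formal, is making the ACCP half of the \emph{forward} direction precise: a principal subact of $A/B$ need not lift to a principal subact of $A$, so one cannot blindly transfer the chain. The fix is to work on the level of $\ar_S$-classes using Proposition~\ref{prop:maxcondition}: a chain of principal subacts $a_1S^1\subseteq a_2S^1\subseteq\cdots$ in $A/B$ with all $a_i\neq 0$ corresponds, by definition of the $A/B$-action, to the relation $a_{i}\in a_{i+1}S^1$ holding \emph{in $A$} (since $a_i\notin B$ forces the product that witnesses containment to be computed in $A$), so $a_1S^1\subseteq a_2S^1\subseteq\cdots$ is genuinely an ascending chain of principal subacts of $A$ and stabilises; chains that eventually involve the zero subact are trivially fine. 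In the noetherian case one may alternatively invoke the equivalence with "every subact finitely generated" from Theorem~\ref{thm:principal}, which makes both directions almost immediate and sidesteps the lifting issue entirely — I would likely present the proof that way for brevity, and then note the ACCP statement follows by the same bookkeeping restricted to principal subacts together with Proposition~\ref{prop:maxcondition}.
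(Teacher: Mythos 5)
Your overall strategy is the same as the paper's: restrict chains to $B$ and push them forward to $A/B$ for one direction, and for the converse combine the chains $C_i\cap B$ and $(C_i\cup B)/B$. The noetherian case goes through exactly as you describe. Your treatment of the forward ACCP direction is in fact more careful than the paper's own proof, which simply takes preimages $D_n=C_n\theta^{-1}$ and calls them principal (they are of the form $aS^1\cup B$, so they are not); your observation that a containment $a_i\in a_{i+1}\cdot S^1$ in $A/B$ with $a_i\neq 0$ is witnessed by a product computed in $A$, so the chain of principal subacts already lives in $A$, is the right repair.

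There is, however, a gap in your converse for ACCP. When the $C_i=c_iS^1$ are principal, the intersections $C_i\cap B$ are subacts of $B$ but in general not principal ones (in the free semigroup on $\{x,y\}$ acting on itself, with $B$ the right ideal of all words containing $x$, the subact $yS^1\cap B$ is not even finitely generated), so ``stabilises by hypothesis'' does not follow from $B$ satisfying ACCP; as written the step is circular, since stabilisation of $C_i\cap B$ is only guaranteed once you already know the $C_i$ stabilise. The repair is a case split on the generators rather than on the subacts: if $c_i\in B$ for all large $i$, then $c_iS^1\subseteq B$ and the tail of the chain is a chain of principal subacts of $B$, which stabilises by ACCP for $B$; if instead $c_i\notin B$ for infinitely many $i$, then for those indices the $(C_i\cup B)/B$ form a chain of principal subacts of $A/B$, whose stabilisation gives $C_j\setminus B=C_i\setminus B$ for large $i<j$, whence $c_j\in C_i$ and $C_j=c_jS^1\subseteq C_i\subseteq C_j$, with no appeal to $B$ at all. (The paper's own proof is equally terse at this point, writing ``(principal)'' throughout a converse argument that really only works verbatim for the noetherian case, so the gap is shared; but the step does need the extra argument.)
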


\begin{proof}
Suppose that $A$ is noetherian (resp.\ satisfies ACCP).  Since any ascending chain of (principal) subacts of $B$ is also an ascending chain of (principal) subacts of $A,$ it follows that $B$ is noetherian (resp.\ satisfies ACCP).  Now consider an ascending chain $$C_1\subseteq C_2\subseteq\cdots$$ of (principal) subacts of $A/B.$ Let $\theta : A\to A/B$ be the quotient map, and set $D_n=C_n\theta^{-1}$ for all $n\in\mathbb{N}.$  Then we have an ascending chain $$D_1\subseteq D_2\subseteq\cdots$$ of (principal) subacts of $A.$  Since $A$ is noetherian, there exists $m\in\mathbb{N}$ such that $D_n=D_m$ for all $n\geq m.$  Then $C_n=D_n\theta=D_m\theta=C_m$ for all $n\geq m.$  Hence $A/B$ is noetherian (resp.\ satisfies ACCP).

Conversely, suppose that both $B$ and $A/B$ are noetherian (resp.\ satisfy ACCP).  Consider an ascending chain $$A_1\subseteq A_2\subseteq\cdots$$ of (principal) subacts of $A.$  If $A_n\cap B=\emptyset$ for all $n\in\mathbb{N},$ then each $A_n$ is a subact of $A/B,$ and hence the above chain must eventually stabilise since $A/B$ is noetherian.  Assume then that there exists $i_0\in\mathbb{N}$ such that $A_{i_0}\cap B\neq\emptyset.$  Setting $B_n=A_n\cap B$ and $C_n=(A_n\cup B)/B$ for all $n\geq i_0,$ we obtain ascending chains $$B_{i_0}\subseteq B_{i_0+1}\subseteq\cdots\;\;\text{ and }\;\;C_{i_0}\subseteq C_{i_0+1}\subseteq\cdots$$ of $B$ and $A/B,$ respectively.  Since $B$ and $A/B$ are noetherian, these chains eventually stabilise, and thus there exists $m\geq i_0$ such that $B_n=B_m$ and $C_n=C_m$ for all $n\geq m.$  Then we have that $$A_n=(A_n\!\setminus\!B_n)\cup B_n=(C_n\!\setminus\!\{0\})\cup B_n=(C_m\!\setminus\!\{0\})\cup B_m=(A_m\!\setminus\!B_m)\cup B_m=B_m$$ for all $n\geq m.$  Hence $A$ is noetherian.
\end{proof}






We now focus on the semigroup conditions of being right noetherian and of satisfying ACCPR.  Every free semigroup satisfies ACCPR, but a free semigroup is right noetherian if and only if it is monogenic:

\begin{prop}\cite[Proposition 3.5]{Miller:2021}
\label{prop:free}
Let $X$ be a non-empty set.  The free semigroup $X^{\ast}$ on $X$ satisfies ACCPR, but $X^{\ast}$ is right noetherian if and only if $|X|=1.$
\end{prop}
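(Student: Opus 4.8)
The plan is to prove the two assertions separately. For the first, that $X^{\ast}$ satisfies ACCPR, I would describe the principal right ideals of $X^\ast$ explicitly. For a word $w\in X^\ast$, the principal right ideal $wS^1$ consists of all words having $w$ as a prefix, together with $w$ itself. Hence $wS^1\subseteq vS^1$ if and only if $v$ is a prefix of $w$, so the poset of $\ar$-classes of $X^\ast$ (equivalently, by Corollary~\ref{cor:maxcondition}, the poset of principal right ideals) is anti-isomorphic to the prefix order on $X^\ast$. An ascending chain $w_1S^1\subseteq w_2S^1\subseteq\cdots$ therefore corresponds to a descending chain of words under the prefix order, i.e.\ $w_1$ has $w_2$ as a prefix, $w_2$ has $w_3$ as a prefix, and so on. But the lengths $|w_1|\geq|w_2|\geq\cdots$ form a non-increasing sequence of non-negative integers, so they stabilise, and once the lengths are equal the words are equal. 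Thus the chain terminates, and by Corollary~\ref{cor:maxcondition} $X^\ast$ satisfies ACCPR.

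For the second assertion, if $|X|=1$ then $X^\ast$ is the free monogenic semigroup, which has only countably many right ideals forming a single chain (the right ideal generated by $x^n$ is $\{x^m : m\geq n\}$); in any case it has finitely many $\ar$-classes? No --- more carefully, $X^\ast\cong(\mathbb{N},+)$, whose right ideals are exactly the sets $\{n,n+1,\dots\}$, which form a chain, so $X^\ast$ is trivially right noetherian. Conversely, suppose $|X|\geq 2$, say $a,b\in X$ are distinct. I would exhibit an infinite antichain of principal right ideals and invoke Corollary~\ref{cor:principal}(4): consider the words $w_n=ba^n$ for $n\geq 0$ (or $a^n b$). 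For $m\neq n$, neither of $ba^m$, $ba^n$ is a prefix of the other, so $w_mS^1$ and $w_nS^1$ are incomparable. Since $X^\ast$ satisfies ACCPR but contains this infinite antichain of principal right ideals, Corollary~\ref{cor:principal} shows $X^\ast$ is not right noetherian.

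I expect no serious obstacle here; the only point requiring a little care is the correct description of containment of principal right ideals in terms of the prefix order (note the reversal: a longer word generates a \emph{smaller} right ideal), and making sure the chosen antichain words are pairwise prefix-incomparable --- using words of the form $ba^n$ rather than, say, $a^n$ handles this cleanly since the leading letter $b$ followed by a block of $a$'s of differing length can never be a prefix of another such word. An alternative route for the converse, avoiding the antichain, is to note directly that the right ideal $aX^\ast\cup bX^\ast\cup\cdots$ --- more to the point, the right ideal generated by $\{ba^n : n\geq 0\}$ --- is not finitely generated, since any finite subset of words of the form $ba^n$ has bounded length and hence cannot generate $ba^N$ for large $N$; then Corollary~\ref{cor:principal}(2) applies. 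Either formulation works; I would present the antichain version as it fits the pattern of Theorem~\ref{thm:principal}.
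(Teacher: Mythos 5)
The paper does not actually prove this proposition; it is quoted from \cite{Miller:2021}, so there is no in-paper argument to compare against. Your overall strategy is the natural one: describe containment of principal right ideals of $X^{\ast}$ via the prefix order, get ACCPR from the fact that lengths cannot strictly decrease forever, and for $|X|\geq 2$ exhibit an infinite antichain of principal right ideals and invoke Corollary \ref{cor:principal}. The first half and the $|X|=1$ case are correct as written.

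However, your chosen antichain is wrong. For $m<n$ the word $ba^{m}$ \emph{is} a prefix of $ba^{n}$ (both begin with $b$ followed by $a$'s), so $(ba^{n})S^{1}\subseteq (ba^{m})S^{1}$ and the family $\{(ba^{n})S^{1} : n\geq 0\}$ is a descending chain, not an antichain; your closing remark that ``the leading letter $b$ followed by a block of $a$'s of differing length can never be a prefix of another such word'' asserts exactly the false statement. The same error infects your alternative route: the right ideal generated by $\{ba^{n} : n\geq 0\}$ is just $bS^{1}$, which is principal, so it certainly is finitely generated. The fix is the choice you mention only parenthetically: take $w_{n}=a^{n}b$. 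Then for $m<n$ the $(m{+}1)$-st letter of $a^{m}b$ is $b$ while that of $a^{n}b$ is $a$, so neither word is a prefix of the other, $\{(a^{n}b)S^{1} : n\geq 1\}$ is a genuine infinite antichain, and (for the alternative route) the right ideal $\bigcup_{n\geq 1}(a^{n}b)S^{1}$ is not finitely generated. With that substitution the proof goes through; as submitted, the key witness fails.
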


Since every free semigroup satisfies ACCPR, this property is certainly not closed under quotients.  On the other hand, the property of being right noetherian {\em is} closed under quotients:

\begin{lemma}
\cite[Lemma 4.1]{Miller:2021}
\label{lem:quotient}
Let $S$ be a semigroup and let $\rho$ be a congruence on $S.$  If $S$ is right noetherian, then so is $S/\rho.$
\end{lemma}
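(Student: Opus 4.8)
The plan is to exploit the correspondence theorem for congruences together with the characterisation of right noetherianity in terms of the ascending chain condition on right ideals (Corollary~\ref{cor:principal}). Write $\bar{S} = S/\rho$ and let $\pi : S \to \bar{S}$ be the natural quotient homomorphism. The key observation is that $\pi$ induces a well-behaved map between right ideals: if $\bar{I}$ is a right ideal of $\bar{S}$, then $\bar{I}\pi^{-1}$ is a right ideal of $S$ (since $\rho$ is a congruence, $(\bar{I}\pi^{-1})S \subseteq \bar{I}\pi^{-1}$ follows from $\bar{I}\bar{S} \subseteq \bar{I}$), and moreover $\bar{I} = (\bar{I}\pi^{-1})\pi$ because $\pi$ is surjective. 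Consequently the assignment $\bar{I} \mapsto \bar{I}\pi^{-1}$ is an order-preserving injection from the poset of right ideals of $\bar{S}$ into the poset of right ideals of $S$.

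First I would verify these three facts carefully: (i) the preimage of a right ideal under $\pi$ is a right ideal; (ii) this preimage map is strictly order-preserving, i.e.\ $\bar{I} \subsetneq \bar{J}$ implies $\bar{I}\pi^{-1} \subsetneq \bar{J}\pi^{-1}$, which again uses surjectivity of $\pi$ to recover $\bar{I}$ and $\bar{J}$ from their preimages. Then, given any strictly ascending chain $\bar{I}_1 \subsetneq \bar{I}_2 \subsetneq \cdots$ of right ideals of $\bar{S}$, applying $\pi^{-1}$ yields a strictly ascending chain $\bar{I}_1\pi^{-1} \subsetneq \bar{I}_2\pi^{-1} \subsetneq \cdots$ of right ideals of $S$. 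Since $S$ is right noetherian, this chain stabilises, and pushing back down via $\pi$ (using $\bar{I}_n = (\bar{I}_n\pi^{-1})\pi$) shows the original chain stabilises. Hence $\bar{S}$ satisfies the ascending chain condition on right ideals, i.e.\ is right noetherian.

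There is essentially no serious obstacle here; the only point requiring a moment's care is the direction of the inclusions under the preimage map and the fact that $\pi$ surjective guarantees $(\bar{I}\pi^{-1})\pi = \bar{I}$ (this would fail for a non-surjective homomorphism). Alternatively, one could phrase the whole argument in the language of Corollary~\ref{cor:principal}(2): take a right ideal $\bar{I}$ of $\bar{S}$, note $\bar{I}\pi^{-1}$ is a right ideal of $S$ hence finitely generated, say by $x_1, \dots, x_n$, and then $\{x_1\pi, \dots, x_n\pi\}$ generates $\bar{I}$ as a subact of $\bar{S}$ — so every right ideal of $\bar{S}$ is finitely generated. I would probably present the finite-generation version as it is marginally shorter, but either route is routine.
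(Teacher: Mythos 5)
Your argument is correct. Note that the paper does not prove this lemma itself but cites it from \cite[Lemma 4.1]{Miller:2021}; your proof via the strictly order-preserving preimage map $\bar{I}\mapsto\bar{I}\pi^{-1}$ (or equivalently via pushing finite generating sets forward through the surjection $\pi$) is the standard correspondence-theorem argument and both variants you sketch go through without issue.
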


The property of being right noetherian is not in general inherited by ideals; see \cite[Remark 6.10]{Miller:2021}.  Going in the other direction, if both an ideal and the associated Rees quotient are right noetherian, then so is the ideal extension:

\begin{prop}
\cite[Corollary 4.5]{Miller:2021}
\label{prop:idealext,wrn}
Let $S$ be a semigroup and let $I$ be an ideal of $S.$  If both $I$ and $S/I$ are right noetherian, then so is $S.$
\end{prop}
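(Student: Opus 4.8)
The plan is to apply Proposition \ref{prop:acts} with $A=S_S$ and $B=I_S$. Since $S_S$ is noetherian if and only if $S$ is right noetherian, that result reduces the statement to showing that, assuming $I$ and $S/I$ are right noetherian, both $I_S$ and $S_S/I_S$ are noetherian as $S$-acts. So the proof splits into two claims: (A) if $I$ is right noetherian then $I_S$ is noetherian; and (B) if $S/I$ is right noetherian then $S_S/I_S$ is noetherian. Neither claim needs the full strength of Proposition \ref{prop:acts}; each is a short bridging argument.

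For (A): a subact of $I_S$ is precisely a non-empty right ideal $J$ of $S$ with $J\subseteq I$. Any such $J$ satisfies $JI\subseteq JS\subseteq J$, so $J$ is also a right ideal of the semigroup $I$; as $I$ is right noetherian, Corollary \ref{cor:principal} yields a finite $X\subseteq J$ with $J=XI^1$. Then $XS^1=X\cup XS\subseteq J$, because $X\subseteq J$ and $J$ is a right ideal of $S$, while $J=X\cup XI\subseteq XS^1$; hence $J=XS^1$ is finitely generated as an $S$-act. By Theorem \ref{thm:principal}, $I_S$ is noetherian. The point worth noticing is that a right ideal of $S$ lying inside $I$ is finitely generated over $S$, not merely over $I$: being a genuine right ideal of $S$ forces any set of $I$-generators to already absorb all of $JS$.

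For (B): unwinding the definition of Rees quotient in both senses, one checks that the $S$-subacts of $S_S/I_S$ and the non-empty right ideals of the semigroup $S/I$ are the same subsets of $(S\setminus I)\cup\{0\}$ — in each case they are the subsets $C$ with $0\in C$ such that $cs\in C$ whenever $c\in C\setminus\{0\}$, $s\in S\setminus I$ and $cs\notin I$. Hence the poset of $S$-subacts of $S_S/I_S$ is isomorphic to the poset of right ideals of $S/I$ (adjoining the empty right ideal as a bottom element does not affect the ascending chain condition), so $S_S/I_S$ is noetherian if and only if $S/I$ is right noetherian. Combining (A) and (B) with Proposition \ref{prop:acts} gives the result. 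Alternatively, one can bypass Proposition \ref{prop:acts} and argue directly via Corollary \ref{cor:principal}: for a right ideal $J$ of $S$, use (A) to finitely generate $J\cap I$ over $S$ and (B) to finitely generate the image of $J$ in $S/I$, then check that the union of these two finite sets generates $J$. I expect the bookkeeping in (B) — verifying that the two closure conditions genuinely coincide, and handling the edge cases involving $0$ — to be the only step that requires real care.
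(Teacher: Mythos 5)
Your argument is correct, but note that the paper does not prove this proposition at all --- it is imported by citation from [Miller 2021, Corollary 4.5] --- so there is no in-paper proof to match; what you have supplied is a self-contained argument built from the paper's own act-theoretic machinery. Your claim (B) is precisely Lemma \ref{lem:rq,wrn}, and your reduction via Proposition \ref{prop:acts} in effect proves the stronger Corollary \ref{cor:idealext,wrn} directly and then specialises it. The genuinely new ingredient is claim (A): a right ideal $J$ of $S$ contained in $I$ satisfies $JI\subseteq JS\subseteq J$, hence is a right ideal of the semigroup $I$, and a finite $I$-generating set $X$ for $J$ already generates $J$ over $S$ because $J$ absorbs all of $S$, so $XS^1\subseteq J=XI^1\subseteq XS^1$. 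This is correct, and it is worth noting that only this direction holds: the converse of (A) fails, as Example \ref{non-wrn min ideal} shows ($\K_S$ is noetherian there while $\K$ is not right noetherian), so your strictly one-directional use of it is exactly what is needed. Finally, the bookkeeping you flag in (B) is even cleaner than you fear: since $I$ is a non-empty two-sided ideal, $cI\subseteq I$ for every $c$, so every non-empty subact of $S_S/I_S$ automatically contains $0$, and the closure condition for subacts of $S_S/I_S$ coincides verbatim with that for right ideals of $S/I$; no genuine edge cases remain.
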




Recall that an element $a$ of a semigroup $S$ is {\em regular} if there exists $b\in S$ such that $a=aba,$ and $S$ is {\em regular} if all its elements are regular.  The property of being right noetherian is inherited by regular subsemigroups:  

\begin{prop}\cite[Corollary 5.7]{Miller:2021}
\label{prop:reg,rn}
Let $S$ be a semigroup with a regular subsemigroup $T.$  If $S$ is right noetherian then so is $T.$
\end{prop}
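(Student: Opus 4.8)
The plan is to use the characterisation in Corollary \ref{cor:principal} that a semigroup is right noetherian precisely when every one of its right ideals is finitely generated (as an act over itself). So I would start with an arbitrary right ideal $J$ of $T$ and aim to exhibit a finite generating set for it as a $T$-act.

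The first move is to push $J$ up into $S$. The set $JS^1$ is a right ideal of $S$, so since $S$ is right noetherian it is finitely generated as an $S$-act; and because every element of $JS^1$ is of the form $js$ with $j \in J$ and $s \in S^1$, a routine check shows that the generating set may be taken inside $J$, say $JS^1 = \{j_1, \dots, j_n\}S^1$ with each $j_i \in J$.

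The heart of the argument — and the one place where regularity is used — is the identity $aS^1 \cap T = aT^1$, valid for every $a \in T$. The inclusion $\supseteq$ is immediate. For $\subseteq$, suppose $x = as$ with $s \in S^1$ and $x \in T$, and pick $a' \in T$ with $a = aa'a$; then $x = as = (aa'a)s = (aa')(as) = (aa')x = a(a'x)$, and $a'x \in T$, so $x \in aT^1$. Granting this, take any $a \in J \subseteq T$; from $a \in JS^1 = \bigcup_k j_kS^1$ we get $a \in j_kS^1 \cap T = j_kT^1$ for some $k$, whence $J \subseteq \{j_1, \dots, j_n\}T^1 \subseteq J$. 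Thus $J$ is generated by $\{j_1, \dots, j_n\}$ as a $T$-act, and Corollary \ref{cor:principal} gives that $T$ is right noetherian.

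I do not expect any serious obstacle: once the lemma $aS^1 \cap T = aT^1$ is isolated the proof is short, and that lemma is a one-line consequence of regularity. The only points to be careful about are the (easy) verification that a finite generating set of $JS^1$ can be chosen within $J$, and the implicit restriction to non-empty right ideals inherent in the act-theoretic formulation of Corollary \ref{cor:principal}. An alternative route via the poset of $\ar$-classes is possible, but it essentially requires proving $\ar^T = \ar^S \cap (T \times T)$ for regular $T$ and then comparing the induced orders, which is the same computation dressed differently.
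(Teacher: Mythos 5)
Your proof is correct. Be aware, though, that the paper does not actually prove this proposition---it is quoted from \cite{Miller:2021} without proof---so the only in-paper argument to compare against is the proof of the ACCPR analogue, Proposition \ref{prop:reg,ACCPR}. There the author takes a chain $a_1T^1\subseteq a_2T^1\subseteq\cdots$, pushes it into $S$, stabilises it, and then uses $a_m=a_mxa_m$ with $x\in T$ to compute $a_n=a_m(xa_n)\in a_mT$; your identity $aS^1\cap T=aT^1$ for $a$ regular in $T$ is precisely that computation isolated as a lemma. Where you genuinely diverge is in working with arbitrary right ideals via the finite-generation characterisation of Corollary \ref{cor:principal} rather than with chains of principal right ideals: this is the right call, since a principal-chain argument alone would only deliver ACCPR for $T$, not the full noetherian property (one would additionally have to rule out infinite antichains, or else run the chain argument on arbitrary right ideals using $I S^1\cap T=I$, which is your lemma again). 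Your two flagged technical points are both fine: a finite generating set of $JS^1$ can indeed be pulled back into $J$ because $js\,S^1\subseteq jS^1$, and the non-emptiness convention for right ideals is harmless here.
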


The corresponding statement for the property of satisfying ACCPR also holds:

\begin{prop}
\label{prop:reg,ACCPR}
Let $S$ be a semigroup with a regular subsemigroup $T.$  If $S$ satisfies ACCPR then so does $T.$
\end{prop}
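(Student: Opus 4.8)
The plan is to follow the template of Proposition~\ref{prop:reg,rn}, but the argument is appreciably shorter here because for ACCPR one need not control antichains. The crux is that in a regular semigroup every element generates the same principal right ideal as an idempotent, and that for idempotents $e,f$ the containment $eT^1\subseteq fT^1$ is equivalent to the purely multiplicative condition $fe=e$, which makes no reference to whether we work inside $T$ or inside the larger semigroup $S$.

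Concretely, I would start with an ascending chain $a_1T^1\subseteq a_2T^1\subseteq\cdots$ of principal right ideals of $T$. For each $i$, regularity of $T$ yields $b_i\in T$ with $a_i=a_ib_ia_i$; put $e_i=a_ib_i$. Then $e_i$ is an idempotent of $T$ with $e_ia_i=a_i$ and $e_i=a_ib_i$, whence $a_iT^1=e_iT^1$. So it suffices to show that the chain $e_1T^1\subseteq e_2T^1\subseteq\cdots$ terminates. Since each $e_i$ is idempotent we have $e_iT^1=e_iT$ and $e_iS^1=e_iS$, and a routine check shows that for idempotents $e,f$ one has $eT^1\subseteq fT^1\iff fe=e\iff eS^1\subseteq fS^1$; consequently $e_iT^1\subseteq e_jT^1\iff e_iS^1\subseteq e_jS^1$, and likewise with $\subseteq$ replaced by $=$. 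Hence $e_1S^1\subseteq e_2S^1\subseteq\cdots$ is an ascending chain of principal right ideals of $S$. As $S$ satisfies ACCPR, there is $m$ with $e_nS^1=e_mS^1$ for all $n\geq m$, and transporting these equalities back through the displayed equivalence gives $a_nT^1=e_nT^1=e_mT^1=a_mT^1$ for all $n\geq m$. Thus $T$ satisfies ACCPR (cf.\ Corollary~\ref{cor:maxcondition}).

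I do not expect a genuine obstacle, but the one place requiring care is the reduction to idempotents: without it the naive attempt stalls, since $a_iS^1=a_jS^1$ only delivers $a_j=a_is$ with $s\in S^1$ and there is no visible way to pull $s$ into $T^1$. Passing to idempotents removes this difficulty precisely because idempotency makes the adjoined identities irrelevant ($eT^1=eT$, $eS^1=eS$) and reduces ideal containment to an equation that can be tested in either semigroup.
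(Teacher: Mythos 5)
Your proof is correct and rests on the same mechanism as the paper's: pass to the corresponding ascending chain of principal right ideals of $S,$ apply ACCPR there, and use regularity to pull the resulting containments back into $T.$ The only difference is cosmetic: the paper applies regularity once at the stabilization index (writing $a_n=a_m(xa_n)\in a_mT$ with $a_m=a_mxa_m$), whereas you normalize every generator to an idempotent first and invoke the equivalence $eT^1\subseteq fT^1\iff fe=e\iff eS^1\subseteq fS^1$; both steps encode the same computation.
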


\begin{proof}
Consider an ascending chain
$$a_1T_1\subseteq a_2T^1\subseteq\cdots$$
of principal right ideals of $T.$  Then clearly we have an an ascending chain
$$a_1S_1\subseteq a_2S^1\subseteq\cdots$$
of principal right ideals of $S.$  Since $S$ is right noetherian, there exists $m\in\mathbb{N}$ such that $a_nS^1=a_mS^1$ for all $n\geq m.$  Therefore, for any $n\geq m$ there exists $s_n\in S$ such that $a_n=a_ms_n.$  Since $T$ is regular, there exists $x\in T$ such that $a_m=a_mxa_m.$  Then we have that
$$a_n=a_mxa_ms_n=a_m(xa_n)\in a_mT,$$
and hence $a_nT^1=a_mT^1.$  Thus $T$ satisfies ACCPR.
\end{proof}

\section{Semigroups Satisfying ACCPR\nopunct}
\label{sec:ACCPR}

In this section we consider the relationship between semigroups and their (one-sided) ideals with respect to the property of satisfying ACCPR.  We first consider ideals in general, and we then focus on minimal and 0-minimal ideals.

\subsection{General ideals}

It turns out that, unlike the property of being right noetherian, the property of satisfying ACCPR {\em is} closed under ideals.  In fact, we show that this property is closed under the more general class of ($m, n$)-ideals, introduced by Lajos in \cite{Lajos:1963}.\par
Let $m, n\in\mathbb{N}.$  An ($m, n$)-{\em ideal} of a semigroup $S$ is a subsemigroup $A$ of $S$ such that $A^mSA^n\subseteq A.$  Notice that any one-sided ideal is an ($m, n$)-ideal.  (1,1)-ideals are also known as {\em bi-ideals}, which were introduced by Good and Hughes in \cite{Good:1952}.

\begin{thm}
\label{thm:(m,n)-ideal}
Let $S$ be a semigroup, and let $A$ be an {\em($m, n$)}-ideal of $S$ for some $m, n\in\mathbb{N}.$  If $S$ satisfies ACCPR, then so does $A.$
\end{thm}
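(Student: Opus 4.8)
The plan is to mimic the argument for Proposition~\ref{prop:reg,ACCPR}, using the defining inclusion $A^mSA^n\subseteq A$ in place of regularity. Start with an ascending chain $a_1A^1\subseteq a_2A^1\subseteq\cdots$ of principal right ideals of $A$, where each $a_i\in A$. Passing to $S$, we get an ascending chain $a_1S^1\subseteq a_2S^1\subseteq\cdots$ of principal right ideals of $S$; since $S$ satisfies ACCPR, there is $m_0$ with $a_nS^1=a_{m_0}S^1$ for all $n\geq m_0$. So for each $n\geq m_0$ we have $a_n=a_{m_0}s_n$ for some $s_n\in S^1$. The task is to upgrade this to $a_n\in a_{m_0}A^1$ — i.e.\ to replace the element $s_n\in S^1$ by something in $A^1$.

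The key trick is to work one step down the chain rather than with a fixed bottom element: the issue is that we cannot directly ``absorb'' $s_n$ into $A$ with only one factor of $a$. Instead I would first replace the original chain by the chain $a_1^k A^1\subseteq a_2^k A^1\subseteq\cdots$ for a suitable fixed power $k$ (note $a_i^k\in A$ since $A$ is a subsemigroup, and $a_i^kA^1\subseteq a_iA^1$, though one must be a little careful since this is not obviously a subchain in a way that recovers the original). A cleaner route: observe that from $a_nS^1=a_{m_0}S^1$ one also gets, for $n\geq m_0$, elements $s_n, t_n\in S^1$ with $a_n=a_{m_0}s_n$ and $a_{m_0}=a_n t_n$. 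Now consider the element $a_{m_0}^{m}\, x\, a_{m_0}^{n}$ for appropriate $x\in S$: using $a_{m_0}=a_{m_0}(s_{m_0+1}t_{m_0+1})\cdots$ one can iterate to write $a_{m_0}$ with arbitrarily many $a_{m_0}$-factors available, so that $a_n = a_{m_0}s_n = a_{m_0}^{m}\, u\, a_{m_0}^{n}\, v$ for some $u\in S^1$, $v\in S^1$ — and then $a_{m_0}^{m}\,u\,a_{m_0}^{n}\in A^mSA^n\subseteq A$ (absorbing $S^1$ into $S$ by enlarging powers if $u=1$), giving $a_n\in a_{m_0}^{?}A^1$. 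The bookkeeping has to be arranged so that what remains on the left is again $a_{m_0}$ (or a power of it that still lies in the same $\ar_A$-class), which is where the iteration $a_{m_0}=a_{m_0}s_{m_0+1}t_{m_0+1}=a_{m_0}(s\,t)^j$ for all $j$ is used to manufacture the needed $\max(m,n)$ extra copies of $a_{m_0}$.

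Concretely, I would fix $n\geq m_0$, write $a_{m_0}=a_{m_0}w$ with $w=s_n' t_n'\in S^1$ (where $a_n=a_{m_0}s_n'$, $a_{m_0}=a_nt_n'$), so $a_{m_0}=a_{m_0}w^{j}$ for every $j\geq 0$; take $j$ large, say $j\geq m+n$, and write $a_n=a_{m_0}s_n=a_{m_0}w^{j}s_n$. Expanding $w^j$ as a product of elements of $S$ and interleaving copies of $a_{m_0}$ via $a_{m_0}=a_{m_0}w$ repeatedly, one rewrites this as $a_n=\bigl(a_{m_0}^{m}\,p\,a_{m_0}^{n}\bigr)q$ with $p\in S$ and $q\in S^1$; the parenthesised factor lies in $A^mSA^n\subseteq A$, and since $A$ is a subsemigroup containing $a_{m_0}$ we get $a_n\in a_{m_0}A\subseteq a_{m_0}A^1$, whence $a_nA^1\subseteq a_{m_0}A^1$; the reverse inclusion is immediate from $a_{m_0}A^1\subseteq a_nA^1$ (as $a_{m_0}=a_nt_n'$, and $a_{m_0}\in a_nA^1$ requires the same absorption trick once more, now applied to $t_n'$). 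Hence $a_nA^1=a_{m_0}A^1$ for all $n\geq m_0$, so $A$ satisfies ACCPR.

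The main obstacle is exactly the absorption bookkeeping: a priori $s_n$ only sits in $S^1$, and to place the rewritten word into the pattern $A^mSA^n$ one needs at least $m$ $a_{m_0}$-factors on the left and $n$ on the right with genuine $S$ in between, so the ``$1$'' cases ($S^1$ versus $S$) and the requirement $n\geq \max(m,n)$ copies must be handled with care. Once the idempotent-free analogue of $a=aba$ — namely the relation $a_{m_0}=a_{m_0}w$ giving a supply of self-factors — is in hand, the rest is routine, but getting the indices to line up so that the leftover left factor is again $a_{m_0}$ (and not some shorter prefix) is the crux.
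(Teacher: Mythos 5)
There is a genuine gap at exactly the step you flag as the crux: the ``interleaving'' of extra copies of $a_{m_0}$. The only relations you have available are $a_n=a_{m_0}s_n'$, $a_{m_0}=a_nt_n'$ and hence $a_{m_0}=a_{m_0}w^j$ with $w=s_n't_n'\in S^1$, and substituting $a_{m_0}\mapsto a_{m_0}w$ (or $a_{m_0}s_n'\mapsto a_n$, $a_nt_n'\mapsto a_{m_0}$) never increases the number of $a_{m_0}$- or $a_n$-factors in the word: at every stage your expression has exactly one such factor, sitting at the left end, followed by a string of elements of $S^1$. So the target shape $a_{m_0}^{m}\,p\,a_{m_0}^{n}\,q$, which needs $m+n$ genuine factors from $A$, cannot be reached; iterating $a_{m_0}=a_{m_0}w^{j}$ never produces a second $a_{m_0}$, and nothing forces $w$ or $s_n$ to lie in $A$. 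In effect you are trying to prove the stronger statement that the chain in $A$ stabilises at the very index $m_0$ at which it stabilises in $S$, using only the data $a_n\in a_{m_0}S^1$ and $a_{m_0}\in a_nA^1$, and that does not follow for a general $(m,n)$-ideal.

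The paper obtains the needed elements of $A$ from a different source: it argues by contradiction with a \emph{strictly} ascending chain $a_1A^1\subsetneq a_2A^1\subsetneq\cdots$ and uses the $m+n+1$ consecutive terms beyond the index $N$ at which the chain stabilises in $S$. Each strict inclusion supplies one genuine element of $A$ (strictness upgrades $a_i\in a_{i+1}A^1$ to $a_i\in a_{i+1}A$), so chaining $n$ of them gives $a_{N+m+1}\in a_{N+m+n+1}A^{n}\subseteq a_NSA^{n}$, and chaining $m$ more gives $a_N\in a_{N+m}A^{m}$, whence $a_{N+m+1}\in a_{N+m}A^{m}SA^{n}\subseteq a_{N+m}A$, contradicting $a_{N+m}A^1\subsetneq a_{N+m+1}A^1$. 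To repair your argument, replace the single pair $(a_{m_0},a_n)$ by this window of successive terms and harvest one $A$-factor per strict inclusion; the rest of your outline (passing to $S$ and invoking ACCPR there) is sound.
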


\begin{proof}
Assume for a contradiction that there exists an infinite strictly ascending chain $$a_1A^1\subsetneq a_2A^1\subsetneq\cdots$$ of principal right ideals of $A.$  Then clearly we have an ascending chain 
$$a_1S^1\subseteq a_2S^1\subseteq\cdots$$
of principal right ideals of $S.$  Since $S$ satisfies ACCPR, there exists $N\in\mathbb{N}$ such that $a_NS^1=a_pS^1$ for all $p\geq N.$  
Now, we have $a_{N+m+j}\in a_{N+m+j+1}A$ for each $j\in\{1, \dots, n\},$ $a_{N+m+n+1}\in a_NS,$ and $a_{N+i}\in a_{N+i+1}A$ for each $i\in\{0, \dots, m-1\}.$  Thus, we have
\begin{align*}
a_{N+m+1}&\in a_{N+m+2}A\subseteq a_{N+m+3}A^2\subseteq\cdots\subseteq a_{N+m+n+1}A^n\subseteq a_NSA^n\\&\subseteq a_{N+1}ASA^n\subseteq a_{N+2}A^2SA^n\subseteq\cdots\subseteq a_{N+m}A^mSA^n\subseteq a_{N+m}A,
\end{align*}
where the final containment follows from the fact that $A$ is an ($m, n$)-ideal of $S.$  But then $a_{N+m}A^1=a_{N+m+1}A^1,$ contradicting the assumption.
\end{proof}

\begin{cor}
\label{cor:ideal,ACCPR}
Let $S$ be a semigroup and let $I$ be a right/left/two-sided ideal of $S.$  If $S$ satisfies ACCPR, then so does $I.$
\end{cor}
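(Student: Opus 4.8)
The plan is to obtain Corollary~\ref{cor:ideal,ACCPR} as an immediate consequence of Theorem~\ref{thm:(m,n)-ideal}. The key observation is that a right ideal, a left ideal, and a two-sided ideal of $S$ are all instances of $(m,n)$-ideals for suitable choices of $m$ and $n$; once this is noted, the corollary follows with no further work.

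Concretely, I would argue as follows. Let $I$ be a right ideal of $S$. Then $I$ is a subsemigroup (since $I\cdot I\subseteq I\cdot S\subseteq I$), and $I^1SI^0 = IS\subseteq I$ — here using the convention $I^0$ in the sense that no factor of $I$ is required on the right — so more carefully one checks $I\cdot S\cdot I^1\subseteq I$ fails in general, hence the clean statement is that $I$ is a $(1,0)$-ideal; to stay within the stated framework where $m,n\in\mathbb{N}$, one instead notes $I^1 S I^1 = ISI^1\subseteq I S \subseteq I$, so $I$ is a $(1,1)$-ideal (a bi-ideal). Dually, a left ideal $I$ satisfies $I^1SI^1 = I^1SI\subseteq SI\subseteq I$, so it too is a $(1,1)$-ideal. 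A two-sided ideal is in particular a right ideal, hence also a $(1,1)$-ideal. In every case $I$ is an $(m,n)$-ideal of $S$ with $m=n=1$, so Theorem~\ref{thm:(m,n)-ideal} applies directly: if $S$ satisfies ACCPR then so does $I$.

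I do not anticipate any real obstacle here, since all the content sits in Theorem~\ref{thm:(m,n)-ideal}. The only point requiring a moment of care is the bookkeeping with the exponents: one must verify that a one-sided ideal genuinely satisfies the defining inclusion $I^m S I^n\subseteq I$ for the chosen $m,n$, and that these $m,n$ lie in $\mathbb{N}$ as required by the hypothesis of the theorem. Taking $m=n=1$ handles all three cases uniformly (every right ideal and every left ideal is a bi-ideal), so the corollary is a one-line deduction.

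Alternatively, and perhaps more transparently, one could give a direct proof mirroring that of Proposition~\ref{prop:reg,ACCPR}: starting from an ascending chain $a_1I^1\subseteq a_2I^1\subseteq\cdots$ of principal right ideals of $I$, pass to the corresponding chain in $S$, which stabilises by hypothesis, and then use the ideal property of $I$ to pull the stabilisation back down to $I$. But since Theorem~\ref{thm:(m,n)-ideal} has already been established in exactly this spirit, the cleanest exposition is simply to invoke it, observing that one-sided (and two-sided) ideals are $(1,1)$-ideals.
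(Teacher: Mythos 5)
Your proposal is correct and matches the paper's own route exactly: the paper derives this corollary from Theorem~\ref{thm:(m,n)-ideal} via the observation (stated just before that theorem) that any one-sided ideal is an $(m,n)$-ideal, which is precisely your reduction to the case $m=n=1$. The digression about $(1,0)$-ideals is unnecessary but harmless, since the $(1,1)$-ideal verification you settle on is all that is needed.
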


It was noted in Section \ref{sec:prelim} that the property of satisfying ACCPR is not closed under quotients.  However, we shall see that this property {\em is} closed under Rees quotients.  First note that, given an ideal $I$ of $S,$ we have both the semigroup Rees quotient $S/I$ and the $S$-act Rees quotient $S_S/I_S$ (with the same universe).

\begin{lemma}
\label{lem:rq}
Let $S$ be a semigroup and let $I$ be an ideal of $S.$  Then $S/I$ satisfies ACCPR if and only if $S_S/I_S$ satisfies ACCP.
\end{lemma}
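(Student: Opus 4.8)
The plan is to compare the poset of principal right ideals of the semigroup $S/I$ with the poset of principal subacts of the $S$-act $S_S/I_S$, and show that modulo the single element $0$ these two posets carry the same information. The key point is that the two Rees quotients share the same underlying set $(S\setminus I)\cup\{0\}$, and the $S$-act action of $S$ on $S_S/I_S$ is, on the elements of $S\setminus I$, essentially the restriction of the $S/I$-multiplication (padded out by the action of any element of $I$, which sends everything to $0$).

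First I would unwind the definitions. For $a\in (S/I)\setminus\{0\}=S\setminus I$, the principal right ideal $a(S/I)^1$ in the semigroup $S/I$ consists of $a$ together with all products $a\cdot s$ for $s\in S/I$, which is $\{a\}\cup\{as : s\in S,\ as\notin I\}\cup(\{0\}$ if some product lands in $I$ or if $0$ occurs$)$. On the act side, the principal subact $a(S/I)^1$ of $S_S/I_S$ — here I should be careful that $S^1$ acts, but since $I$ is an ideal the element $0$ of $S/I$ behaves like an absorbing element and the monoid $S/I$ acts on $S_S/I_S$ in the obvious way extending the $S$-action — is $\{a\}\cup\{a\cdot s : s\in S\}$ computed with the act action, which is again $\{a\}\cup\{as : s\in S,\ as\notin I\}\cup(\{0\}$ if appropriate$)$. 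So in fact the cyclic subact generated by $a$ in $S_S/I_S$ and the principal right ideal generated by $a$ in $S/I$ are literally the same subset, and likewise $0(S/I)^1=\{0\}$ in both pictures. Consequently the map $a(S/I)^1\mapsto a(S/I)^1$ is a bijection between principal right ideals of $S/I$ and principal subacts of $S_S/I_S$ that preserves and reflects containment, i.e.\ a poset isomorphism.

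Given that isomorphism, the conclusion is immediate: an ascending chain of principal right ideals of $S/I$ stabilises if and only if the corresponding ascending chain of principal subacts of $S_S/I_S$ stabilises, so $S/I$ satisfies ACCPR if and only if $S_S/I_S$ satisfies ACCP. (Alternatively one can phrase it via Corollary~\ref{cor:maxcondition} and Proposition~\ref{prop:maxcondition}: both conditions are equivalent to every non-empty set of principal right ideals, respectively principal subacts, having a maximal element, and these sets are identified by the bijection above.)

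The main obstacle — really the only subtle point — is bookkeeping around the element $0$ and the adjoined identity: one must check that principal right ideals in the semigroup $S/I$ are taken with respect to $(S/I)^1$, that $0$ is absorbing so that $0(S/I)^1=\{0\}$ matches $0S^1=\{0\}$ in the act, and that an element $s\in I$ contributes nothing new on either side (it sends every element of $S/I$ to $0$ in the semigroup product, and acts as $0$ on $S_S/I_S$). Once this is verified carefully the identification of the two posets is routine, and I would keep the write-up short accordingly.
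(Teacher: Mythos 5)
Your proposal is correct and follows essentially the same route as the paper: the paper's proof likewise rests on the observation that, because $I$ is an ideal, for $a,b\in S\!\setminus\!I$ the containment $a(S/I)^1\subseteq b(S/I)^1$ holds if and only if the corresponding containment of principal subacts of $S_S/I_S$ holds, so the two posets are identified and the chain conditions transfer. Your extra bookkeeping (that both principal objects are literally the same subset of $(S\!\setminus\!I)\cup\{0\}$, using $a\cdot 0=0$ on one side and $aI\subseteq I$ on the other) is exactly the verification the paper leaves implicit.
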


\begin{proof}
Since $I$ is an ideal of $S,$ for any $a, b\in S\!\setminus\!I$ we have that $aS^1\subseteq bS^1$ if and only if $a(S/I)^1\subseteq b(S/I)^1.$  From this fact the result readily follows.
\end{proof}

\begin{cor}
\label{cor:idealandRQ}
Let $S$ be a semigroup and let $I$ be an ideal of $S.$  If $S$ satisfies ACCPR, then both $I$ and $S/I$ satisfy ACCPR.
\end{cor}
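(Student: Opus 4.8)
The plan is to deduce Corollary \ref{cor:idealandRQ} directly from the two results that immediately precede it, with essentially no new work. The statement has two halves: that $I$ satisfies ACCPR, and that $S/I$ satisfies ACCPR. For the first half, I would simply invoke Corollary \ref{cor:ideal,ACCPR}, which is the special case $m=n=1$ (or indeed the statement for two-sided ideals) of Theorem \ref{thm:(m,n)-ideal}: since $S$ satisfies ACCPR and $I$ is an ideal of $S$, it follows that $I$ satisfies ACCPR.

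For the second half, I would chain together the earlier structural facts about acts. Since $S$ satisfies ACCPR, by definition the $S$-act $S_S$ satisfies ACCP. Now $I_S$ is a subact of $S_S$, so by Proposition \ref{prop:acts} the Rees quotient act $S_S/I_S$ satisfies ACCP. Finally, Lemma \ref{lem:rq} translates this back into semigroup language: $S_S/I_S$ satisfying ACCP is equivalent to $S/I$ satisfying ACCPR. Combining the two halves gives the corollary.

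There is really no main obstacle here: the corollary is a packaging of Corollary \ref{cor:ideal,ACCPR}, Proposition \ref{prop:acts}, and Lemma \ref{lem:rq}. The only point requiring the slightest care is making sure the act-theoretic version of ACCP for $S_S/I_S$ is correctly identified with the semigroup-theoretic ACCPR for $S/I$ — but that identification is exactly the content of Lemma \ref{lem:rq}, so it is already done. Accordingly I would present the proof as two short sentences, one for each clause, citing the relevant results.

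\begin{proof}
That $I$ satisfies ACCPR follows immediately from Corollary \ref{cor:ideal,ACCPR}.  For the Rees quotient, note that since $S$ satisfies ACCPR the $S$-act $S_S$ satisfies ACCP, so by Proposition \ref{prop:acts} the subact-quotient $S_S/I_S$ satisfies ACCP, and hence $S/I$ satisfies ACCPR by Lemma \ref{lem:rq}.
\end{proof}
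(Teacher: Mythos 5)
Your proof is correct and follows exactly the same route as the paper's own argument: Corollary \ref{cor:ideal,ACCPR} for the ideal, then Proposition \ref{prop:acts} applied to the subact $I_S$ of $S_S$ followed by Lemma \ref{lem:rq} to translate ACCP for $S_S/I_S$ into ACCPR for $S/I$. Nothing to add.
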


\begin{proof}
We have that $I$ satisfies ACCPR by Corollary \ref{cor:ideal,ACCPR}.  Since $S_S$ satisfies ACCP, the quotient $S_S/I_S$ satisfies ACCP by Proposition \ref{prop:acts}, and hence $S/I$ satisfies ACCPR by Lemma \ref{lem:rq}.
\end{proof}

\begin{cor}
\label{cor:idealext}
Let $S$ be a semigroup and let $I$ be an ideal of $S.$  Then $S$ satisfies ACCPR if and only if $S/I$ satisfies ACCPR and (the $S$-act) $I_S$ satisfies ACCP.
\end{cor}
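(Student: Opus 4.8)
The plan is to deduce both directions from results already in hand, treating the statement essentially as a repackaging of Proposition \ref{prop:acts} through the dictionary supplied by Lemma \ref{lem:rq}. For the forward implication, suppose $S$ satisfies ACCPR. By Corollary \ref{cor:idealandRQ} the Rees quotient $S/I$ satisfies ACCPR; and since $S_S$ satisfies ACCP (this being a restatement of $S$ satisfying ACCPR), Proposition \ref{prop:acts} applied to the subact $I_S\subseteq S_S$ gives that $I_S$ satisfies ACCP. So the forward direction is immediate.

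For the converse, suppose $S/I$ satisfies ACCPR and $I_S$ satisfies ACCP. The key observation is that Lemma \ref{lem:rq} translates the hypothesis on $S/I$ into the statement that the $S$-act Rees quotient $S_S/I_S$ satisfies ACCP. Now I have a subact $B=I_S$ of the $S$-act $A=S_S$ with both $B$ and $A/B$ satisfying ACCP, so Proposition \ref{prop:acts} yields that $S_S$ satisfies ACCP, i.e.\ $S$ satisfies ACCPR. Thus both directions reduce to a single application of Proposition \ref{prop:acts} in the $S$-act category, with Lemma \ref{lem:rq} bridging the semigroup Rees quotient $S/I$ and the act Rees quotient $S_S/I_S$.

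I do not expect any genuine obstacle here, since every ingredient is already established; the only point requiring a moment's care is the distinction between the two notions of "Rees quotient" — the semigroup $S/I$ versus the $S$-act $S_S/I_S$ — which share the same underlying set but which Proposition \ref{prop:acts} and Corollary \ref{cor:idealandRQ} refer to in their act and semigroup forms respectively. Lemma \ref{lem:rq} is precisely what reconciles these, so the write-up will simply be a matter of invoking it at the right moment in each direction.
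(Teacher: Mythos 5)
Your proposal is correct and follows essentially the same route as the paper: the forward direction via Corollary \ref{cor:idealandRQ} and Proposition \ref{prop:acts}, and the converse by using Lemma \ref{lem:rq} to translate the hypothesis on $S/I$ into ACCP for $S_S/I_S$ and then applying Proposition \ref{prop:acts}. Your explicit remark about the two notions of Rees quotient is exactly the point Lemma \ref{lem:rq} is there to handle.
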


\begin{proof}
If $S$ satisfies ACCPR, then $S/I$ satisfies ACCPR by Corollary \ref{cor:idealandRQ}.  Since $S_S$ satisfies ACCP, the subact $I_S$ satisfies ACCP by Proposition \ref{prop:acts}.  The converse follows from Proposition \ref{prop:acts} and Lemma \ref{lem:rq}.
\end{proof}

Recall that a principal factor of a semigroup is either the minimal ideal (if it exists) or the Rees quotient of a certain ideal by another ideal.  Thus Corollary \ref{cor:idealandRQ} yields:

\begin{cor}
If a semigroup $S$ satisfies ACCPR, then so do all its principal factors.
\end{cor}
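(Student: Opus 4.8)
The plan is to invoke the structural description of principal factors recalled immediately before the statement, together with Corollaries~\ref{cor:ideal,ACCPR} and~\ref{cor:idealandRQ}. Let $F$ be the principal factor of a $\jay$-class $J$ of $S$, and split into two cases according to whether or not $J$ is the kernel.

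If $J=\K$, then by definition $F=\K$, which is itself a two-sided ideal of $S$; since $S$ satisfies ACCPR, Corollary~\ref{cor:ideal,ACCPR} immediately gives that $F$ does too. If $J\neq\K$, fix $x\in J$ and set $I=S^1xS^1$, the principal ideal generated by $x$. As $I$ is a two-sided ideal of $S$ satisfying ACCPR, Corollary~\ref{cor:ideal,ACCPR} tells us $I$ satisfies ACCPR. Next I would check that $I':=I\setminus J$ is a nonempty ideal of $I$: for $a\in I'$ and $s\in I$ one has $as\in I$ and $as\leq_{\jay}a<_{\jay}x$ (the strictness because $a\notin J_x$), whence $as\notin J$, so $as\in I'$, and symmetrically $sa\in I'$; moreover $I'\neq\emptyset$, for otherwise $I=J$ would force the $\jay$-class $J$ to be an ideal of $S$, hence a minimal ideal, hence $\K$, contrary to assumption. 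Since $F=I/I'$ by the definition of the principal factor, applying Corollary~\ref{cor:idealandRQ} with $I$ in the role of the ambient semigroup and $I'$ in the role of its ideal yields that $F$ satisfies ACCPR. (Alternatively one could invoke the forward implication of Corollary~\ref{cor:idealext} here.)

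This is really a short two-case argument resting entirely on the two corollaries, so I do not expect any substantial obstacle; the only point requiring a little care is the routine verification that $I\setminus J$ is genuinely a \emph{nonempty} ideal of $I$, so that the cited results apply verbatim, and in particular the small observation that a $\jay$-class which happens to be an ideal must be the kernel.
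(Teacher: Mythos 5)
Your argument is correct and is essentially the paper's own: the paper derives this corollary immediately from the observation that a principal factor is either an ideal of $S$ (the kernel case) or the Rees quotient of a principal ideal by an ideal, invoking Corollaries~\ref{cor:ideal,ACCPR} and~\ref{cor:idealext} (your use of Corollary~\ref{cor:idealandRQ} is an equivalent substitute for the latter). You simply fill in the routine verifications that $(S^1xS^1)\setminus J$ is a nonempty ideal when $J\neq\K$, which the paper leaves implicit.
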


We shall show that the converse of Corollary \ref{cor:idealandRQ} does not hold.  To this end, we introduce the following construction.

\begin{con}
\label{con}
Let $S$ be a semigroup and let $A$ be an $S$-act.  Let $\{x_a : a\in A\}$ be a set in one-to-one correspondence with $A$ and disjoint from $S,$ and let $0$ be an element disjoint from $S\cup\{x_a : a\in A\}.$  Define a multiplication on $U=S\cup\{x_a : a\in A\}\cup\{0\},$ extending that on $S,$ by
$$x_as=x_{as}\,\text{ and }\,sx_a=x_ax_b=u0=0u=0$$
for all $s\in S,$ $a, b\in A$ and $u\in U.$  With this multiplication, $U$ is a semigroup, and we denote it by $\mathcal{U}(S, A).$  Notice that $\{x_a : a\in A\}\cup\{0\}$ is a null semigroup and an ideal of $S.$
\end{con}

\begin{prop}
\label{prop:con,ACCPR}
Let $S$ be a semigroup, let $A$ be an $S$-act, and let $U=\mathcal{U}(S, A).$  Then $U$ satisfies ACCPR if and only if $S$ satisfies ACCPR and $A$ satisfies ACCP.
\end{prop}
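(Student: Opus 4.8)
The plan is to describe the poset of principal right ideals of $U=\mathcal{U}(S,A)$ explicitly and then transfer the chain conditions through two order-embeddings. A direct computation from the defining multiplication shows that $sU^1=sS^1\cup\{0\}$ for $s\in S$, that $x_aU^1=\{x_c:c\in aS^1\}\cup\{0\}$ for $a\in A$, and that $0U^1=\{0\}$. From these formulas I would record three facts. First, for $s,t\in S$ we have $sU^1\subseteq tU^1$ if and only if $sS^1\subseteq tS^1$ (since $sS^1\subseteq S$ but $0\notin S$), so $s\mapsto sU^1$ is an order-embedding of the poset of principal right ideals of $S$ into that of $U$. Second, for $a,b\in A$ we have $x_aU^1\subseteq x_bU^1$ if and only if $aS^1\subseteq bS^1$, so $a\mapsto x_aU^1$ is an order-embedding of the poset of principal subacts of $A$. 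Third, no principal right ideal of the form $sU^1$ is comparable with one of the form $x_bU^1$ (because $s\notin x_bU^1$ and $x_a\notin tU^1$), while $\{0\}$ is the least principal right ideal of $U$.

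For the forward implication, suppose $U$ satisfies ACCPR. An ascending chain $s_1S^1\subseteq s_2S^1\subseteq\cdots$ of principal right ideals of $S$ maps, via the first embedding, to an ascending chain $s_1U^1\subseteq s_2U^1\subseteq\cdots$ in $U$; the latter terminates, and since $sU^1=tU^1$ implies $sS^1=tS^1$, the original chain terminates too. Hence $S$ satisfies ACCPR. Running the same argument through the second embedding on an ascending chain of principal subacts of $A$ shows that $A$ satisfies ACCP.

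For the converse, suppose $S$ satisfies ACCPR and $A$ satisfies ACCP, and let $P_1\subseteq P_2\subseteq\cdots$ be an ascending chain of principal right ideals of $U$. If every $P_i$ equals $\{0\}$ we are done; otherwise let $k$ be least with $P_k\neq\{0\}$, so that $P_k=sU^1$ for some $s\in S$ or $P_k=x_aU^1$ for some $a\in A$. In the first case, every $P_j$ with $j\geq k$ contains $s$, so by the third fact above each such $P_j$ must be of the form $tU^1$ with $t\in S$; by the first fact these translate to an ascending chain of principal right ideals of $S$, which terminates since $S$ satisfies ACCPR, and therefore $(P_j)_{j\geq k}$ terminates. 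The second case is identical, using the second fact and the hypothesis that $A$ satisfies ACCP. Either way the chain terminates, so $U$ satisfies ACCPR.

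I do not expect a genuine obstacle here; the one point needing care is the third fact, namely that the ``$S$-part'' $\{sU^1:s\in S\}$ and the ``$x$-part'' $\{x_aU^1:a\in A\}$ of the poset of principal right ideals of $U$ are mutually incomparable and sit above the common least element $\{0\}$ — this is precisely what confines an ascending chain to a single part once it leaves $\{0\}$. One could alternatively derive the proposition from Corollary \ref{cor:idealext} applied to the null ideal $N=\{x_a:a\in A\}\cup\{0\}$ of $U$, after checking that the associated Rees quotient $U/N$ is isomorphic to $S$ with a zero adjoined and that the $U$-act $N$ has the same poset of principal subacts as $A$ with a least element adjoined, but the direct computation above is more transparent.
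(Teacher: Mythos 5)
Your proof is correct. The route differs from the paper's in its top-level organisation: the paper applies Corollary \ref{cor:idealext} to the null ideal $I=\{x_a : a\in A\}\cup\{0\}$, reducing the claim to the two sub-claims that $U/I\cong S^0$ satisfies ACCPR iff $S$ does and that the poset of principal subacts of $I_U$ is the poset of principal subacts of $A$ with a least element adjoined (whence $I_U$ satisfies ACCP iff $A$ does, by Proposition \ref{prop:maxcondition}). You instead compute the entire poset of principal right ideals of $U$ by hand --- the formulas $sU^1=sS^1\cup\{0\}$, $x_aU^1=\{x_c : c\in aS^1\}\cup\{0\}$, $0U^1=\{0\}$ are all correct --- and observe that it is the disjoint union of an isomorphic copy of the poset for $S$, an isomorphic copy of the poset for $A$, and a common least element $\{0\}$, with the two copies mutually incomparable. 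That incomparability is exactly the point that makes your converse work (a chain that leaves $\{0\}$ is trapped in one part), and you identify it correctly as the step needing care. What the paper's approach buys is brevity and reuse of the general machinery (Proposition \ref{prop:acts} and Lemma \ref{lem:rq} do the work of confining chains to one part); what yours buys is a self-contained, fully explicit argument that does not depend on the ideal-extension criterion, and an explicit description of the whole poset that makes the structure of $\mathcal{U}(S,A)$ transparent. You correctly flag the paper's route as the alternative in your final remark.
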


\begin{proof}
Let $I=\{x_a : a\in A\}\cup\{0\}.$  By Corollary \ref{cor:idealext}, we have that $U$ satisfies ACCPR if and only if $U/I$ satisfies ACCPR and $I_U$ satisfies ACCP.  Clearly $U/I\cong S^0$ satisfies ACCPR if and only if $S$ satisfies ACCPR.  It is easy to show that, for any $a, b\in A,$ we have $0U^1=0\subsetneq x_aU^1,$ and $x_aU^1\subseteq x_bU^1$ if and only if $aS^1\in bS^1.$   Thus, the poset of principal subacts of $I_U$ has the form $P\cup\{0\},$ where $P$ is isomorphic to the poset of principal subacts of $A.$  It follows that $I_U$ satisfies ACCP if and only if $A$ satisfies ACCP.  This completes the proof.
\end{proof}

We now show that the converse of Corollary \ref{cor:idealandRQ} does not hold. 

Let $S$ be a semigroup that satisfies ACCPR with an $S$-act $A$ that does not satisfy ACCP.  (For example, we can take $A$ to be any {\em semigroup} that does not satisfy ACCPR and $S$ to be a free semigroup with a surjective homomorphism $\theta : S\to A.$  We turn $A$ into an $S$-act by defining $a\cdot s=a(s\theta)$ for all $a\in A$ and $s\in S.$  We have that $S$ satisfies ACCPR by Proposition \ref{prop:free}, and it is straightforward to show that $A$ does not satisfy ACCP.)  The semigroup $U=\mathcal{U}(S, A)$ does not satisfy ACCPR by Proposition \ref{prop:con,ACCPR}.  On the other hand, the ideal $I=\{x_a : a\in A\}\cup\{0\}$ certainly satisfies ACCPR (indeed, any null semigroup satisfies ACCPR by Corollary \ref{cor:minrightideals}), and the Rees quotient $U/I\cong S^0$ satisfies ACCPR since $S$ satisfies ACCPR.

\vspace{0.7em}
We now consider conditions on an ideal $I$ such that converse of Corollary \ref{cor:idealandRQ} {\em does} hold.\par
Given a semigroup $S,$ we say that an element $a\in S$ has a {\em local right identity} (in $S$) if there exists $s\in S$ such that $a=as$; i.e.\ $a\in aS.$  If $S$ is a monoid or a regular semigroup, then clearly every element has a local right identity.

\begin{prop}
\label{prop:lri}
Let $S$ be a semigroup, let $I$ be an ideal of $S,$ and suppose that every element of $I$ has a local right identity in $I.$  Then $S$ satisfies ACCPR if and only if both $I$ and $S/I$ satisfy ACCPR.
\end{prop}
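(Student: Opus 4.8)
The forward implication is immediate from Corollary \ref{cor:idealandRQ}, so all the work is in the converse. Assuming that $I$ and $S/I$ both satisfy ACCPR, the plan is to apply Corollary \ref{cor:idealext}: since $S/I$ satisfies ACCPR by hypothesis, it suffices to prove that the $S$-act $I_S$ satisfies ACCP.

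To do this, I would take an arbitrary ascending chain $a_1S^1\subseteq a_2S^1\subseteq\cdots$ of principal subacts of $I_S$, where each $a_i\in I$, and convert it into an ascending chain of principal right ideals of the semigroup $I$. The crucial observation is that whenever $a_i\neq a_{i+1}$ we have $a_i=a_{i+1}t$ for some $t\in S$; choosing a local right identity $s\in I$ for $a_i$, so that $a_i=a_is$, gives $a_i=a_{i+1}(ts)$ with $ts\in I$ because $I$ is an ideal. Hence $a_i\in a_{i+1}I^1$ and therefore $a_iI^1\subseteq a_{i+1}I^1$ (the case $a_i=a_{i+1}$ being trivial), yielding an ascending chain $a_1I^1\subseteq a_2I^1\subseteq\cdots$ of principal right ideals of $I$.

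Since $I$ satisfies ACCPR, this chain stabilises: there is $m$ with $a_nI^1=a_mI^1$ for all $n\geq m$. For such $n$ we then have $a_n\in a_mI^1\subseteq a_mS^1$, so $a_nS^1\subseteq a_mS^1$, while the reverse inclusion holds by assumption; thus $a_nS^1=a_mS^1$ for all $n\geq m$. This shows $I_S$ satisfies ACCP, and Corollary \ref{cor:idealext} finishes the proof.

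I expect the main obstacle to be exactly the conversion step: passing from a chain of $S$-act subacts of $I$ to a chain of right ideals of $I$ requires absorbing the $S$-multiplier $t$ into $I$, and this is precisely where the local-right-identity hypothesis is used. That some such hypothesis is genuinely necessary is witnessed by Construction \ref{con}: there the null ideal $I$ and the Rees quotient $U/I\cong S^0$ both satisfy ACCPR yet $U$ need not, and in that null ideal no nonzero element has a local right identity.
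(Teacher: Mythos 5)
Your proof is correct and follows essentially the same route as the paper's: both reduce the converse to showing that the $S$-act $I_S$ satisfies ACCP (invoking Corollary \ref{cor:idealext}) and both absorb the $S$-multiplier into $I$ via a local right identity to convert the chain of principal subacts into a chain of principal right ideals of $I$. The only cosmetic difference is that you apply the local right identity to $a_i$ rather than to $a_{i+1}$, which changes nothing.
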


\begin{proof}
We show that the $S$-act $I_S$ satisfies ACCP.  The result then follows from Corollary \ref{cor:idealext}.  So, consider an ascending chain $$a_1S^1\subseteq a_2S^1\subseteq\cdots$$ of principal subacts of $I_S$.  Then for each $n\in\mathbb{N},$ we have that 
$$a_n=a_{n+1}S^1\subseteq a_{n+1}IS^1\subseteq a_{n+1}I,$$
using the fact that $a_{n+1}$ has a local right identity in $I.$  Therefore, we have an ascending chain 
$$a_1I^1\subseteq a_2I^1\subseteq\cdots$$ of principal right ideals of $I.$  Since $I$ satisfies ACCPR, there exists $m\in\mathbb{N}$ such that $a_nI^1=a_mI^1$ for all $n\geq m.$  Thus $a_nS^1=a_mS^1$ for all $n\geq m.$
\end{proof}

\begin{prop}
Let $S$ be a semigroup, let $I$ be an ideal of $S,$ and suppose that there is no infinite antichain of principal right ideals of $I.$  Then the following are equivalent:
\begin{enumerate}
\item $S$ satisfies ACCPR;
\item both $I$ and $S/I$ satisfy ACCPR;
\item $I$ is right noetherian and $S/I$ satisfies ACCPR.
\end{enumerate}
\end{prop}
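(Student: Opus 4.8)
The plan is to prove the cycle $(1)\Rightarrow(2)\Rightarrow(3)\Rightarrow(1)$, with the antichain hypothesis entering only in the second implication.

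The implication $(1)\Rightarrow(2)$ is exactly Corollary~\ref{cor:idealandRQ}, and needs nothing further. For $(2)\Rightarrow(3)$, the only new content is that $I$ is right noetherian, since $S/I$ satisfying ACCPR is already part of $(2)$. Here I would invoke the equivalence in Corollary~\ref{cor:principal}: by $(2)$ the semigroup $I$ satisfies ACCPR, and by hypothesis $I$ contains no infinite antichain of principal right ideals, so $I$ is right noetherian.

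For $(3)\Rightarrow(1)$, I would use Corollary~\ref{cor:idealext}, which says that $S$ satisfies ACCPR if and only if $S/I$ satisfies ACCPR and the $S$-act $I_S$ satisfies ACCP. The first condition is part of $(3)$, so it suffices to show that $I_S$ satisfies ACCP. The key observation is that for $a\in I$ the principal subact $aS^1$ lies in $I$ (as $I$ is a right ideal of $S$) and is in fact a right ideal of the semigroup $I$, since $(aS^1)I=a(S^1I)\subseteq aI\subseteq aS^1$. Consequently, any ascending chain $a_1S^1\subseteq a_2S^1\subseteq\cdots$ of principal subacts of $I_S$ is an ascending chain of right ideals of the semigroup $I$; since $I$ is right noetherian this chain stabilises, so $I_S$ satisfies ACCP and hence $S$ satisfies ACCPR by Corollary~\ref{cor:idealext}.

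I do not anticipate a real obstacle: the argument is essentially bookkeeping combining Corollaries~\ref{cor:principal}, \ref{cor:idealandRQ} and \ref{cor:idealext}. The one point requiring a little care is the verification that a principal $S$-subact of $I$ really is a right ideal of the semigroup $I$ — otherwise the hypothesis ``$I$ is right noetherian'' could not be brought to bear on the chain — but this is the one-line computation indicated above.
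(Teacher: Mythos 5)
Your proposal is correct, and the implications $(1)\Rightarrow(2)$ and $(2)\Rightarrow(3)$ coincide with the paper's (citing Corollary \ref{cor:idealandRQ} and Corollary \ref{cor:principal} respectively). Where you genuinely diverge is in $(3)\Rightarrow(1)$. The paper argues by contradiction: it takes an infinite strictly ascending chain of principal right ideals of $S$, shows all its generators lie in $I$, and then repeatedly uses the absence of infinite antichains (a consequence of $I$ being right noetherian, via Corollary \ref{cor:principal}) to extract an infinite strictly ascending chain of \emph{principal} right ideals of $I$, contradicting ACCPR for $I$. Your route is more direct and, to my mind, cleaner: you observe that for $a\in I$ the principal $S$-subact $aS^1$ is contained in $I$ and satisfies $(aS^1)I\subseteq aS^1S^1=aS^1$, so it is a right ideal of the semigroup $I$; hence any ascending chain of principal subacts of $I_S$ is an ascending chain of right ideals of $I$ and stabilises by right noetherianity, giving ACCP for $I_S$ and then ACCPR for $S$ via Corollary \ref{cor:idealext}. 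The price you pay is that you invoke the full ascending chain condition on (not necessarily principal) right ideals of $I$, whereas the paper's argument only ever contradicts ACCPR for $I$ together with the antichain condition; but since $(3)$ assumes $I$ is right noetherian outright, this costs nothing here. In fact your observation shows slightly more, namely that every $S$-subact of $I_S$ is a right ideal of $I$, so $I$ right noetherian forces $I_S$ to be noetherian, not merely to satisfy ACCP. Both proofs correctly confine the use of the antichain hypothesis to $(2)\Rightarrow(3)$.
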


\begin{proof}
(1)$\Rightarrow$(2) is Corollary \ref{cor:idealandRQ}.

(2)$\Rightarrow$(3).  Since $I$ satisfies ACCPR and has no infinite antichain of principal right ideals, it is right noetherian by Corollary \ref{cor:principal}.

(3)$\Rightarrow$(1).  Assume for a contradiction that $S$ does not satisfy ACCPR.  Then there exists an infinite strictly ascending chain $$a_1S^1\subsetneq a_2S^1\subsetneq\cdots$$ of principal right ideals of $S.$  We cannot have $a_i\in S\!\setminus\!I$ for any $i\in\mathbb{N},$ for then we would have an infinite ascending chain
$$a_i(S/I)^1\subseteq a_{i+1}(S/I)^1\subseteq\cdots$$
of principal right ideals of $S/I$.  Thus $a_i\in I$ for all $i\in\mathbb{N}.$

Consider the set $\{a_iI^1 : i\in\mathbb{N}\}$ of principal right ideals of $I.$ 
By assumption, this set does not contain an infinite antichain.  Also, we cannot have $a_iI^1\subseteq a_jI^1$ for any $i>j,$ for then we would have $a_iS^1=a_jS^1.$  Thus, there exist $i_1, j_1\in\mathbb{N}$ with $i_1<j_1$ such that $a_{i_1}I^1\subsetneq a_{j_1}I^1.$  Hence $a_{i_1}\in a_{j_1}I.$\par
Now consider the set $\{a_iI^1 : i\geq j_1\}.$  By a similar argument as above, there exist $i_2, j_2\in\mathbb{N}$ with $j_1\leq i_2<j_2$ such that $a_{i_2}I^1\subsetneq a_{j_2}I^1.$  Now, we have 
$$a_{i_1}\in a_{j_1}I\subseteq(a_{i_2}S^1)I=a_{i_2}(S^1I)\subseteq a_{i_2}I,$$
and hence $a_{i_1}I^1\subsetneq a_{i_2}I^1.$
Continuing this process ad infinitum, we obtain an infinite strictly ascending chain 
$$a_{i_1}I^1\subsetneq a_{i_2}I^1\subsetneq a_{i_3}I^1\subsetneq\cdots$$
of principal right ideals of $I,$ contradicting the fact that $I$ is right noetherian.  Hence, $S$ satisfies ACCPR.
\end{proof}

\subsection{Minimal and 0-minimal ideals}

In the remainder of this section we focus on minimal and 0-minimal (one-sided) ideals.  Recall that the minimal ideal of a semigroup $S,$ if it exists, is denoted by $\K(S).$

\begin{prop}
\label{prop:min_right_ideal}
Let $S$ be a semigroup with at least one minimal right ideal, and let $\K=\K(S).$  Then $S$ satisfies ACCPR if and only if $S/\K$ satisfies ACCPR.
\end{prop}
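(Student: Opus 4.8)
The plan is to reduce the statement to Corollary~\ref{cor:idealext} applied to the ideal $I=\K$, for which it suffices to verify that the $S$-act $\K_S$ \emph{always} satisfies ACCP, irrespective of any chain condition on $S$ itself.

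First, since $S$ has a minimal right ideal, the kernel $\K$ exists and, by \cite[Theorem 2.1]{Clifford:1948}, is precisely the union of all the minimal right ideals of $S$. Each such minimal right ideal $R$ is an $S$-subact of $\K_S$, and I would argue that it is a \emph{simple} $S$-act: any $S$-subact of $R$ is a non-empty subset $B\subseteq R$ with $BS\subseteq B$, hence a right ideal of $S$ contained in $R$, so $B=R$ by minimality of $R$. Consequently $\K_S$ is a union of simple subacts (namely the minimal right ideals of $S$), and Corollary~\ref{cor:simplesubacts} yields that $\K_S$ satisfies ACCP.

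It then remains to invoke Corollary~\ref{cor:idealext} with $I=\K$: this says that $S$ satisfies ACCPR if and only if $S/\K$ satisfies ACCPR and the $S$-act $\K_S$ satisfies ACCP. Since the second condition always holds by the preceding paragraph, we conclude that $S$ satisfies ACCPR if and only if $S/\K$ satisfies ACCPR, as required.

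I do not anticipate any real obstacle in this argument; the only point needing a moment's care is the identification of the minimal right ideals of $S$ with simple subacts of the $S$-act $\K_S$, and the observation that $\K_S$ is exactly their union — both of which follow immediately from the definitions and the cited description of $\K$. The rest is a direct application of the machinery already assembled in Section~\ref{sec:prelim}.
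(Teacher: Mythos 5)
Your argument is correct. It differs from the paper's proof in how the condition on the kernel is verified. The paper also reduces to the ideal-extension machinery, but it does so via Proposition~\ref{prop:lri}: it first observes that $\K$, as a semigroup, satisfies ACCPR by Corollary~\ref{cor:minrightideals}, and then shows that every $a\in\K$ has a local right identity (since $a\K$ is a right ideal of $S$ contained in a minimal right ideal $R$, forcing $a\K=R\ni a$); Proposition~\ref{prop:lri} then converts the semigroup-level condition on $\K$ into the act-level condition needed for Corollary~\ref{cor:idealext}. You instead verify the act-level condition directly: the minimal right ideals of $S$ are exactly the simple subacts of $S_S$, their union is $\K_S$, and Corollary~\ref{cor:simplesubacts} gives ACCP for $\K_S$ unconditionally. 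Your route is shorter and bypasses the local-right-identity computation entirely; what the paper's route buys is the intermediate facts that $\K$ itself satisfies ACCPR and that its elements have local right identities, which are reused elsewhere (e.g.\ in the corollary following Theorem~\ref{thm:min_left_ideal}). Both proofs are valid applications of the machinery of Section~\ref{sec:prelim}.
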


\begin{proof}
Clearly $\K,$ being the union of all the minimal right ideals of $S,$ satisfies ACCPR by Corollary \ref{cor:minrightideals}.
Consider $a\in\K.$  Then $a\in R$ for some minimal right ideal $R$ of $S.$  Clearly $a\K$ is a right ideal of $S$ contained in $R,$ so $a\K=R$ by the minimality of $R,$ and hence $a\in a\K.$  Thus every element of $\K$ has a local right identity.  The result now follows from Proposition \ref{prop:lri}.
\end{proof}

We now consider semigroups satisfying ACCPR with minimal left ideals. 

\begin{thm}
\label{thm:min_left_ideal}
Let $S$ be a semigroup that satisfies ACCPR.  Then $S$ has a minimal left ideal if and only if $S$ has a completely simple kernel.
\end{thm}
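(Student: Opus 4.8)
If $S$ has a completely simple kernel then, by \cite[Theorem 3.2]{Clifford:1948} (recalled in Section~\ref{sec:prelim}), $S$ possesses minimal left ideals; the hypothesis that $S$ satisfies ACCPR is not needed here. So the content is the forward implication, which I would first reduce, via the same theorem, to the following assertion: \emph{if $S$ satisfies ACCPR and has a minimal left ideal, then $S$ has a minimal right ideal}. Indeed, once $S$ has a minimal right ideal in addition to the assumed minimal left ideal, \cite[Theorem 3.2]{Clifford:1948} immediately yields a completely simple kernel.

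\textbf{Producing an idempotent.} Fix a minimal left ideal $L$ of $S$. It is left simple by \cite[Theorem 2.4]{Clifford:1948}, and, being a left ideal of $S$, it satisfies ACCPR by Corollary~\ref{cor:ideal,ACCPR}. The key step — and I expect the only real obstacle — is to prove that \emph{a left simple semigroup satisfying ACCPR contains an idempotent}. The plan is: given any $a\in L$, left simplicity provides $x_1\in L$ with $x_1a=a$, and iterating gives elements $x_0=a,x_1,x_2,\dots\in L$ with $x_{n+1}x_n=x_n$ for all $n$. Since $x_n=x_{n+1}x_n\in x_{n+1}L^1$, this yields an ascending chain $x_0L^1\subseteq x_1L^1\subseteq\cdots$ of principal right ideals of $L$; ACCPR makes it stabilise, say $x_nL^1=x_{n+1}L^1$, so $x_{n+1}=x_nt$ for some $t\in L^1$. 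If $t=1$ then $x_n=x_{n+1}x_n=x_n^2$; if $t\in L$ then $x_n=x_ntx_n$, so that $x_{n+1}=x_nt$ is idempotent. Either way $L$ contains an idempotent $e$.

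\textbf{The local group $e\K e$.} Now I would exploit $e$. Since $L\subseteq\K$ by \cite[Theorem 2.1]{Clifford:1948} and $\K e$ is a nonempty left ideal of $S$ contained in $L$, minimality of $L$ forces $\K e=L$. I would then check that $e\K e$ is a group with identity $e$: it is clearly a monoid with identity $e$, and for $a\in e\K e$ we have $a=eae$, whence $ea=a$, so $a\in\K e=L$ and $a\in\K a$; thus $\K a$ is a left ideal of $S$ lying inside $L$, hence equal to $L$, so $e\in\K a$, say $e=ya$ with $y\in\K$. Then $(eye)a=e$ and $eye\in e\K e$, so every element of $e\K e$ has a left inverse in $e\K e$; a monoid with this property is a group.

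\textbf{Minimality of $eS$, and the obstacle.} Finally I would show $eS$ is a minimal right ideal of $S$. It is a right ideal, and $eS=eS^1$. Let $R$ be a nonempty right ideal of $S$ with $R\subseteq eS$, and pick $b\in R$; from $b\in eS$ we get $eb=b$, and $b\in\K S\subseteq\K$, so $be=ebe\in e\K e$. Letting $c$ be the inverse of $be$ in the group $e\K e$, we have $be=b\cdot e\in RS\subseteq R$, hence $e=(be)c\in RS\subseteq R$; as $R$ is a right ideal this forces $eS=eS^1\subseteq R$, so $R=eS$. Thus $S$ has a minimal right ideal, and together with $L$ it has a completely simple kernel by \cite[Theorem 3.2]{Clifford:1948}. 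The main obstacle is the idempotent lemma of the second paragraph: ACCPR constrains right ideals whereas the structural input (left simplicity of $L$) is about left multiplication, and that lemma is precisely the device — turning local left identities into an ascending chain of principal right ideals — that lets the two hypotheses interact; by comparison, the group structure of $e\K e$ and the minimality of $eS$ are routine, modulo some care with the asymmetry between $eS$, $Se$ and $\K$.
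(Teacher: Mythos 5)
Your proof is correct, and its crux coincides with the paper's: the only real content is extracting an idempotent from a minimal left ideal $L$ by playing its left-simplicity (equivalently, $Lx=L$ for $x\in L$) against the ascending chain condition on \emph{right} ideals. You implement that step slightly differently — you pass to $L$ itself (which inherits ACCPR by Corollary \ref{cor:ideal,ACCPR}) and build a stabilising chain $x_0L^1\subseteq x_1L^1\subseteq\cdots$ of iterated local left identities, with a two-case finish depending on whether $t=1$ or $t\in L$ — whereas the paper works in $S$ directly, takes a maximal element $xS^1$ of $\{aS^1:a\in L\}$ via Corollary \ref{cor:maxcondition}, writes $x=yx$ using $Lx=L$, and gets $y^2=y$ in three lines. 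Both are sound; the paper's is a little shorter. Where you genuinely diverge is the endgame: the paper stops at the idempotent and cites \cite[Theorem 8.14]{Clifford:1967} to conclude $\K$ is completely simple, while you push on to show $e\K e$ is a group and $eS$ is a minimal right ideal, then invoke \cite[Theorem 3.2]{Clifford:1948}. Your route is longer but self-contained — it in effect re-proves the portion of Theorem 8.14 being used — and the computations with $\K e=L$, $\K a=L$, and the inverse of $be$ in $e\K e$ all check out. The trade-off is exactly as you'd expect: one citation saved at the cost of a page of (correct, routine) verification.
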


\begin{proof}
If $S$ has a completely simple kernel, then, as established in Section \ref{sec:prelim}, $S$ has minimal left ideals.

Now suppose that $S$ has a minimal left ideal.  Then $\K=\K(S)$ is the union of all the minimal left ideals of $S.$  We shall prove that $\K$ has an idempotent, and then $\K$ is completely simple by \cite[Theorem 8.14]{Clifford:1967}.

Let $L$ be a minimal left ideal of $S,$ and consider the set $\{aS^1 : a\in L\}$ of principal right ideals of $S.$  This set contains a maximal element, say $xS^1.$  Since $Lx$ is left ideal of $S$ contained in $L,$ we have that $L=Lx$ by the minimality of $L.$  Thus $x=yx$ for some $y\in L,$ and hence $xS^1\subseteq yS^1.$  Since $xS^1$ is maximal in the set $\{aS^1 : a\in L\},$ we conclude that $xS^1=yS^1.$  Then $y=xs$ for some $s\in S^1,$ and hence $$y^2=y(xs)=(yx)s=xs=y.$$
Thus $L\subseteq\K$ has an idempotent, as required.
\end{proof}

\begin{cor}
Let $S$ be a semigroup.  Then $S$ satisfies ACCPR and has a minimal left ideal if and only if $S$ has a completely simple minimal ideal $\K$ and $S/\K$ satisfies ACCPR.
\end{cor}

\begin{proof}
The forward implication follows from Theorem \ref{thm:min_left_ideal} and Corollary \ref{cor:idealext}.  Conversely, since $S$ has a completely simple minimal ideal, it certainly has a minimal left ideal, and $S$ satisfies ACCPR by Proposition \ref{prop:lri}, since every element of $\K$ has a local right identity.
\end{proof}  

The following result is an analogue of Theorem \ref{thm:min_left_ideal} for 0-minimal ideals.

\begin{thm}
\label{thm:0-simple}
Let $S=S^0$ be a semigroup that satisfies ACCPR and has a 0-minimal ideal $I.$  Then $I$ contains a globally idempotent 0-minimal left ideal of $S$ if and only if $I$ is completely 0-simple.
\end{thm}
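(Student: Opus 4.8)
The plan is to mimic the strategy of Theorem \ref{thm:min_left_ideal}, replacing "minimal left ideal" by "globally idempotent 0-minimal left ideal" and "completely simple kernel" by "completely 0-simple". One direction is immediate: if $I$ is completely 0-simple, then by definition it possesses 0-minimal left ideals of $I$, and we must check these are also 0-minimal left ideals of $S$ and are globally idempotent. Since $I$ is 0-simple, it is not null, so by the left-handed version of the observations preceding Theorem \ref{thm:SR}, its 0-minimal left ideals are globally idempotent (a null 0-minimal left ideal $L$ of a 0-simple semigroup would force $ILI \subseteq$ a null ideal, contradicting 0-simplicity — I would spell this out via $L^2 = 0$ and $IL$ being a left ideal); and a left ideal of $I$ contained in the ideal $I$ of $S$ is a left ideal of $S$, and 0-minimality transfers downward. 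This handles the "if" direction.

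For the "only if" direction, suppose $I$ contains a globally idempotent 0-minimal left ideal $L$ of $S$. The goal is to produce a nonzero idempotent in $I$; once we have that, I would invoke a structure theorem — the 0-minimal ideal $I$ is either null or 0-simple by \cite[Theorem 2.29]{Clifford:1961}, and a null semigroup has no nonzero idempotent, so $I$ is 0-simple; then a 0-simple semigroup containing a 0-minimal left ideal together with a 0-minimal right ideal is completely 0-simple, and I would use the left dual of Theorem \ref{thm:SR} (the globally idempotent part $B^L$ is a 0-simple \emph{left} ideal of $S$ in which 0-minimal right ideals coincide with those of $S$) — alternatively cite the Rees-type characterization that a 0-simple semigroup with a primitive idempotent is completely 0-simple (\cite[Theorem 8.23 or 2.52]{Clifford:1961}). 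The cleanest route is probably: having found a nonzero idempotent $e \in I$, note $eI e \neq 0$ (it contains $e$), and since $I$ is 0-simple with an idempotent it is completely 0-simple by \cite[Theorem 2.48 or 6.19]{Clifford:1967}. I should check which precise reference the paper wants; given the earlier citations, \cite{Clifford:1967} is the natural one.

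To extract the idempotent, I would run the ACCPR argument from Theorem \ref{thm:min_left_ideal}. Consider the set $\{aS^1 : a \in L,\ a \neq 0\}$ of principal right ideals of $S$; this set is nonempty because $L$ is globally idempotent (so $L \neq 0$), and it has a maximal element $xS^1$ with $x \neq 0$ by Corollary \ref{cor:maxcondition}. Now $Lx$ is a left ideal of $S$ contained in $L$; because $L$ is globally idempotent, I expect $Lx \neq 0$ for a suitable choice of $x$ — here is where I must be careful, since for a 0-minimal left ideal we only know $Lx$ is $0$ or $L$, and we need to rule out $Lx = 0$. The key point: since $L^2 = L \neq 0$, there exist $a, b \in L$ with $ab \neq 0$, so $Lb \neq 0$ and hence $Lb = L$; I would therefore choose $x$ to be a maximal element of $\{aS^1 : a \in L,\ La \neq 0\}$ instead, which is nonempty. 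Then $Lx = L$, so $x = yx$ for some $y \in L$; then $xS^1 \subseteq yS^1$, and $Ly \neq 0$ would need to be verified — actually $Ly \supseteq$ something nonzero since $Lx = Lyx = L$ forces $Ly \neq 0$ — so $yS^1$ is in our set and maximality of $xS^1$ gives $xS^1 = yS^1$; writing $y = xs$ with $s \in S^1$ yields $y^2 = y(xs) = (yx)s = xs = y$, a nonzero idempotent in $L \subseteq I$.

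The main obstacle is the bookkeeping around global idempotence versus the $0$ element: unlike the minimal (no zero) case, a 0-minimal left ideal can annihilate in various ways, so I must carefully choose the element $x$ in a subset that is both nonempty (guaranteed by $L^2 = L$) and closed enough under the manipulations to keep everything nonzero, and I must confirm $y \neq 0$ at the end so that the idempotent is genuinely nonzero. The rest is routine, modulo citing the correct completely-0-simple structure theorem from \cite{Clifford:1961} or \cite{Clifford:1967}.
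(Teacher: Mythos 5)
Your overall strategy matches the paper's, and the heart of the forward direction is carried out correctly: you apply the ACCPR maximality condition to the set of principal right ideals $\{aS^1 : a\in L,\ La\neq 0\}$ (equivalently $La=L$, since $La$ is a left ideal of $S$ contained in $L$), you correctly see that one must restrict to this subset rather than all of $L\setminus\{0\}$, and you verify $Ly\neq 0$ via $(Ly)x=L(yx)=Lx=L$ before invoking maximality to get $xS^1=yS^1$ and hence the nonzero idempotent $y=xs$. This is exactly the paper's argument. There are, however, two steps that fail as written. First, in the reverse direction you assert that ``a left ideal of $I$ contained in the ideal $I$ of $S$ is a left ideal of $S$.'' This is false in general: a left ideal $L$ of $I$ satisfies only $IL\subseteq L$, and $SL\subseteq SI\subseteq I$ gives no reason for $SL\subseteq L$. (For instance, let $G$ be a group acting transitively on a set $A$ with $|A|\geq 2$, and let $S=G\cup A\cup\{0\}$ with $ga$ the action and all other products zero; then $\{a,0\}$ is a left ideal of the null ideal $A\cup\{0\}$ but not of $S$.) The transfer genuinely requires the global idempotence you established in the preceding sentence: $SL=SL^2=(SL)L\subseteq IL\subseteq L,$ together with $SL\neq 0$, which is precisely the computation the paper performs. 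As written, you have the hypothesis in hand but do not use it where it is needed.

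Second, your ``cleanest route'' for finishing the forward direction rests on the claim that a $0$-simple semigroup with a nonzero idempotent is completely $0$-simple. This is also false: the bicyclic monoid with a zero adjoined is $0$-simple and has many idempotents but is not completely $0$-simple. The correct statement, which you also mention as an alternative, requires a \emph{primitive} idempotent. The idempotent $y$ you construct is in fact primitive because it lies in the $0$-minimal left ideal $L$, but this must be said (or one cites a version of the structure theorem whose hypothesis is a nonzero idempotent lying in a $0$-minimal left ideal, which is the role played by \cite[Theorem 8.22]{Clifford:1967} in the paper's proof, paralleling the use of Theorem 8.14 in Theorem \ref{thm:min_left_ideal}). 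Both gaps are repairable without altering your construction, but each is a genuine missing step rather than mere bookkeeping.
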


\begin{proof}
($\Rightarrow$)  Suppose that $I$ contains a globally idempotent 0-minimal left ideal $L$ of $S.$  Since $L^2=L,$ therefore $I^2\neq 0$ and hence $I$ is 0-simple.  We shall prove that $I$ contains an idempotent, and then it is completely 0-simple by \cite[Theorem 8.22]{Clifford:1967}.

Recall that for any $a\in L,$ either $La=L$ or $La=0.$  Consider the set $$P=\{aS^1 : a\in L, La=L\}$$ of principal right ideals of $S.$  By the 0-minimality of $L,$ we have $L=S^1a$ for each $a\in L.$  Since $L=L^2,$ there exist $b, c\in L$ such that $bc\in L,$ and hence $L=S^1(bc)=(S^1b)c=Lc.$  Thus $P$ is non-empty.  Since $S$ satisfies ACCPR, $P$ contains a maximal element, say $xS^1.$  Then $x\in L$ and $L=Lx.$  Thus $x=yx$ for some $y\in L,$ and hence $xS^1\subseteq yS^1.$  Since $(Ly)x=L(yx)=Lx=L,$ we cannot have $Ly\neq 0,$ so $Ly=L$ and hence $yS^1\in P.$  Since $xS^1$ is maximal in $P,$ we conclude that $xS^1=yS^1.$  Then, as in the proof of Theorem \ref{thm:min_left_ideal}, we have $y^2=y,$ so $I$ contains an idempotent, as required.

($\Leftarrow$)  If $I$ is completely 0-simple, then it has a globally idempotent 0-minimal ideal $L.$  We have that 
$$SL=SL^2=(SL)L\subseteq IL\subseteq L,$$
and $SL\neq 0$ since $L^2=L,$ so $SL=L$ by the 0-minimality of $L.$  Thus $L$ is a left ideal of $S.$  Clearly any left ideal of $S$ contained in $L$ also a left ideal of $I,$ so it follows from the 0-minimality of $L$ in $I$ that $L$ is 0-minimal in $S.$
\end{proof}

\begin{cor}
\label{cor:0-simple}
Let $S$ be a 0-simple semigroup.  Then $S$ satisfies ACCPR and has a 0-minimal left ideal if and only if $S$ is completely 0-simple.
\end{cor}

\begin{proof}
Suppose that $S$ satisfies ACCPR and has a 0-minimal left ideal $L.$  Since $S$ is 0-simple, we have that $L^2\neq 0$ by \cite[Lemma 2.34]{Clifford:1961}, and hence $L$ must be globally idempotent.  It follows from Theorem \ref{thm:0-simple} that $S$ is completely 0-simple.

The converse clearly holds.
\end{proof}

\begin{cor}
\label{cor:0-min_left_ideal}
Let $S=S^0$ be a semigroup with a globally idempotent 0-minimal left ideal $L.$  If $S$ satisfies ACCPR, then the globally idempotent part $B^L$ of $LS$ is completely 0-simple.
\end{cor}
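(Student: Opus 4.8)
The plan is to deduce the statement from Theorem~\ref{thm:0-simple} applied to the semigroup $B^L$ itself. By the left-handed dual of Theorem~\ref{thm:SR}, since $L$ is a globally idempotent 0-minimal left ideal of $S$, the ideal $LS$ of $S$ generated by $L$ is a $0$-disjoint union of its null part $A^L$ and its globally idempotent part $B^L$, where $B^L$ is a $0$-simple semigroup and a left ideal of $S$, and a subset of $B^L$ is a (0-minimal) left ideal of $B^L$ if and only if it is a (0-minimal) left ideal of $S$. Since $L$ is globally idempotent we have $L=L^2\subseteq LS$, so $L$ is a globally idempotent 0-minimal left ideal of $S$ contained in $LS$, whence $L\subseteq B^L$; by the final part of the cited dual, $L$ is a 0-minimal left ideal of $B^L$, and of course it is still globally idempotent there. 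Thus $B^L$ is a $0$-simple semigroup containing a globally idempotent 0-minimal left ideal of $B^L$.

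Next I would verify that $B^L$ satisfies the hypotheses of Theorem~\ref{thm:0-simple}. Firstly, $B^L$ contains the zero element $0$, so $B^L=(B^L)^0$. Secondly, $B^L$ is a left ideal of $S$ and $S$ satisfies ACCPR, so $B^L$ satisfies ACCPR by Corollary~\ref{cor:ideal,ACCPR}. Thirdly, since $B^L$ is $0$-simple its only proper ideal is $0$, so $B^L$ is a $0$-minimal ideal of itself.

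Finally, apply Theorem~\ref{thm:0-simple} with $B^L$ playing the role of $S$ and with $I=B^L$: the hypotheses hold by the previous paragraph, and $I=B^L$ contains the globally idempotent 0-minimal left ideal $L$ of $B^L$, so Theorem~\ref{thm:0-simple} yields that $B^L$ is completely $0$-simple, as required. I do not expect a genuine obstacle here; the proof is a matter of careful bookkeeping, the two delicate points being the transfer of 0-minimality of left ideals between $S$ and $B^L$ (supplied by the dual of Theorem~\ref{thm:SR}(4)) and the key observation that the $0$-simple semigroup $B^L$ is a $0$-minimal ideal of itself and still satisfies ACCPR, which is precisely what lets us invoke Theorem~\ref{thm:0-simple}.
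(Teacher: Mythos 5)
Your proposal is correct and follows essentially the same route as the paper: use the left--right dual of Theorem~\ref{thm:SR} to see that $B^L$ is a $0$-simple left ideal of $S$ containing a globally idempotent $0$-minimal left ideal of itself, deduce ACCPR for $B^L$ from Corollary~\ref{cor:ideal,ACCPR}, and conclude via Theorem~\ref{thm:0-simple} applied to $B^L$ as a $0$-minimal ideal of itself. The paper's proof is terser (it leaves the final appeal to Theorem~\ref{thm:0-simple} implicit), but the argument is the same; your explicit bookkeeping of why $L\subseteq B^L$ and why $0$-minimality transfers is accurate.
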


\begin{proof}
By the left-right dual of Theorem \ref{thm:SR}, $B^L$ is a left ideal of $S.$  Therefore, since $S$ satisfies ACCPR, $B^L$ satisfies ACCPR by Corollary \ref{cor:ideal,ACCPR}.  Also by the left-right dual of Theorem \ref{thm:SR}, $B^L$ is 0-simple and has globally idempotent 0-minimal left ideals (of itself).
Hence, by Corollary \ref{cor:0-simple}, $B^L$ is completely 0-simple.
\end{proof}

Recall that the left socle $\Sigma^l=\Sigma^l(S)$ of a semigroup $S$ with 0 is the 0-disjoint union of $A^l$ and $B^l,$ which are the null part and globally idempotent part of $\Sigma^l$, respectively.  Note that since $A^l$ is an ideal of $S,$ we may view it as a subact of $S_S$.

\begin{thm}
\label{thm:leftsocle,ACCPR}
Let $S=S^0$ be a semigroup, and let $\Sigma^l=\Sigma^l(S).$  Then the following are equivalent:
\begin{enumerate}
\item $S$ satisfies ACCPR;
\item $B^l$ is either 0 or the 0-direct union of completely 0-simple semigroups $B_i$ $(i\in I),$ the $S$-act $A^l_S$ satisfies ACCP, and $S/\Sigma^l$ satisfies ACCPR.
\end{enumerate}
\end{thm}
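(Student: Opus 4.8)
The plan is to peel off the two parts of condition~(2) that refer to $S$ globally by direct appeals to results already proved, leaving the behaviour of $B^l$ as the only real content. Since $\Sigma^l$ is an ideal of $S$ (the left--right dual of the fact that $\Sigma^r$ is an ideal), Corollary~\ref{cor:idealext} shows that $S$ satisfies ACCPR if and only if $S/\Sigma^l$ satisfies ACCPR and the $S$-act $\Sigma^l_S$ satisfies ACCP. Next, $A^l$ is an ideal of $S$ by the dual of Theorem~\ref{thm:rightsocle}(2), so by Proposition~\ref{prop:acts} the act $\Sigma^l_S$ satisfies ACCP if and only if $A^l_S$ satisfies ACCP and the Rees quotient $S$-act $M:=\Sigma^l_S/A^l_S$ satisfies ACCP. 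Here $M$ has underlying set $B^l$ (using $\Sigma^l=A^l\cup B^l$ with $A^l\cap B^l=0$), with action $b\cdot s=bs$ if $bs\in B^l\setminus 0$ and $b\cdot s=0$ otherwise. So once this reduction is in place, it remains to show (i) that if $S$ satisfies ACCPR then $B^l$ is $0$ or a $0$-direct union of completely $0$-simple semigroups, and (ii) that if $B^l$ is $0$ or such a union then $M$ satisfies ACCP; these give the two implications of the theorem, since the conditions on $S/\Sigma^l$ and $A^l_S$ in~(2) are precisely those produced by the reduction.

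For~(i): given that $S$ satisfies ACCPR, Corollary~\ref{cor:idealandRQ} gives that $S/\Sigma^l$ satisfies ACCPR and Proposition~\ref{prop:acts} gives that $A^l_S$ satisfies ACCP; and if $B^l\neq 0$, the dual of Theorem~\ref{thm:rightsocle}(4) supplies globally idempotent $0$-minimal left ideals $L_i$ with $B^l$ the $0$-direct union of the $0$-simple semigroups $B^{L_i}$, while Corollary~\ref{cor:0-min_left_ideal} (applicable since $S$ satisfies ACCPR) upgrades each $B^{L_i}$ to a completely $0$-simple semigroup. So~(i) is essentially a repackaging of earlier results.

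For~(ii): we may assume $B^l\neq 0$, say $B^l=\bigcup_{i\in I}B_i$ is a $0$-direct union of completely $0$-simple semigroups $B_i$. I would first observe that a $0$-simple semigroup admits no non-trivial $0$-direct decomposition (if $T=U\cup V$ were one then $U$ would be a proper non-zero ideal of $T$), so the decomposition of $B^l$ into $0$-simple summands is essentially unique; comparing it with the dual of Theorem~\ref{thm:rightsocle}(4) we may take $B_i=B^{L_i}$ for globally idempotent $0$-minimal left ideals $L_i$, so each $B_i$ is additionally a left ideal of $S$ (dual of Theorem~\ref{thm:SR}(3)) and, being completely $0$-simple, is the union of its $0$-minimal right ideals, which pairwise meet in~$0$. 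The key step is to compute the principal subacts of $M$: for $b\in B_i\setminus 0$, lying in the $0$-minimal right ideal $R$ of $B_i$ containing it, I claim the principal subact $bS^1$ of $M$ equals $R$. The inclusion $bS^1\subseteq R$ is the delicate direction: if $s\in S$ and $bs\neq 0$ with $bs\in B^l$, then writing $b=bxb$ with $x\in B_i$ (regularity of $B_i$) and using $B_iB_j=0$ for $j\neq i$ forces $bs\in B_i$, and then writing $bs=(bs)y(bs)$ with $y\in B_i$ and using that $B_i$ is a left ideal of $S$ gives $bs=b\cdot(sy(bs))\in bB_i\subseteq R$; the reverse inclusion is routine, using $R=b(B_i)^1$ (by $0$-minimality) and that $S$ has a zero so that $0\in bS^1$. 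Consequently the $\ar_S$-classes of $M$ are exactly $\{0\}$ together with the sets $R\setminus 0$ for $R$ a $0$-minimal right ideal of some $B_i$; since distinct such $R$ meet only in $0$ they are pairwise incomparable under containment, so the poset of $\ar_S$-classes of $M$ is an antichain with a least element adjoined, which trivially satisfies the ascending chain condition. Hence $M$ satisfies ACCP by Proposition~\ref{prop:maxcondition}, and therefore $S$ satisfies ACCPR.

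The main obstacle is the claim that $bS^1=R$ inside $M$, i.e.\ that right-multiplying an element of a piece $B_i$ by an arbitrary element of $S$ either lands in $A^l$ (and so collapses to $0$ in $M$) or stays within the single $0$-minimal right ideal of $B_i$ it started in; this is exactly what forces the $\ar_S$-class poset of $M$ to be so simple. The ingredients needed are the regularity of each completely $0$-simple $B^{L_i}$ and the fact that $B^{L_i}$ is a left ideal of $S$, but one must be careful to first pin down the $0$-direct decomposition of $B^l$ (so that the $B_i$ may be taken to be the structural pieces $B^{L_i}$) before these regularity arguments can be run.
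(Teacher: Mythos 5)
Your proof is correct, and while the easy direction (1)$\Rightarrow$(2) coincides with the paper's (dual of Theorem \ref{thm:rightsocle}(4) plus Corollary \ref{cor:0-min_left_ideal}, Proposition \ref{prop:acts}, Corollary \ref{cor:idealandRQ}), your argument for (2)$\Rightarrow$(1) takes a genuinely different route. The paper passes to the semigroup Rees quotient $T=S/A^l$, observes that $B^l$ is an ideal of $T$ satisfying ACCPR (being a union of $0$-minimal right ideals of itself) in which every element has a local right identity, and then invokes Proposition \ref{prop:lri} together with $T/B^l\cong S/\Sigma^l$; it never needs the individual summands $B_i$ to be left ideals of $S$ or to coincide with the canonical $B^{L_i}$. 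You instead stay at the level of acts, reducing via Corollary \ref{cor:idealext} and Proposition \ref{prop:acts} to showing that $M=\Sigma^l_S/A^l_S$ satisfies ACCP, and you prove the stronger, more explicit statement that every nonzero principal subact of $M$ is a $0$-minimal right ideal of one of the $B_i$, so that $M$ is a union of $0$-simple subacts whose $\ar_S$-class poset is an antichain over $\{0\}$ (at which point you could even have cited Corollary \ref{cor:simplesubacts} instead of Proposition \ref{prop:maxcondition}). The price of your route is the extra bookkeeping you correctly flag: you must pin down the $0$-direct decomposition (indecomposability and essential uniqueness of $0$-simple summands) so that the $B_i$ may be taken to be the structural pieces $B^{L_i}$, hence left ideals of $S$. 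In fact even this can be lightened: in your computation $sy$ lies in $SB^l\subseteq B^l$ since $B^l$ itself is a left ideal of $S$, and $0$-directness then forces $sy\in B_i$, so the uniqueness step is dispensable. What your approach buys is an explicit description of the act $\Sigma^l_S/A^l_S$; what the paper's buys is brevity, by outsourcing the whole implication to the local-right-identity lemma.
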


\begin{proof}
(1)$\Rightarrow$(2).  Suppose that $B^l\neq 0.$  Then, by the left-right dual of Theorem \ref{thm:rightsocle}, there exists a set $\{L_i : i\in I\}$ of globally idempotent 0-minimal left ideals of $S$ such that $B^l$ is the 0-direct union of the 0-simple semigroups $B_i=B^{L_i}$ $(i\in I).$  Each $B_i$ is completely 0-simple by Corollary \ref{cor:0-min_left_ideal}.  The subact $A^l_S$ of $S_S$ satisfies ACCP by Proposition \ref{prop:acts}, and $S/\Sigma^l$ satisfies ACCPR by Corollary \ref{cor:idealandRQ}.

(2)$\Rightarrow$(1).  Let $T$ denote the Rees quotient $S/A^l.$  Since $A^l_S$ satisfies ACCP, by Corollary \ref{cor:idealext} it suffices to prove that $T$ satisfies ACCPR.  Notice that $B_l$ is (isomorphic to) an ideal of $T.$  Since $B^l$ is either 0 or the 0-direct union of completely 0-simple semigroups, it satisfies ACCPR by Corollary \ref{cor:minrightideals}, and every element of $B^l$ has a local right identity.  Now, $T/B^l\cong S/\Sigma^l$ (by the Third Isomorphism Theorem), so $T/B_l$ satisfies ACCPR by assumption.  Hence, by Proposition \ref{prop:lri}, $T$ satisfies ACCPR, as required.
\end{proof}

If $S=S^0$ has no null 0-minimal ideals then $\Sigma^l(S)=B^l$, so by Theorem \ref{thm:leftsocle,ACCPR} we have:

\begin{cor}
Let $S=S^0$ be a semigroup without null 0-minimal ideals, and let $\Sigma^l=\Sigma^l(S).$  Then the following are equivalent:
\begin{enumerate}
\item $S$ satisfies ACCPR;
\item $\Sigma^l$ is either 0 or the 0-direct union of completely 0-simple semigroups, and $S/\Sigma^l$ satisfies ACCPR.
\end{enumerate}
\end{cor}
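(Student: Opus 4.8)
The plan is to derive this corollary directly from Theorem \ref{thm:leftsocle,ACCPR} by specialising to the hypothesis that $S$ has no null $0$-minimal ideals. The key observation is that this hypothesis eliminates the null part of the left socle, i.e.\ forces $A^l=0$, so that $\Sigma^l=B^l$ and the three-part condition (2) of the theorem collapses to the two-part condition stated here.

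First I would argue that $A^l=0$. Recall that $A^l$ is the union of $0$ and all the null $0$-minimal left ideals of $S$. Suppose $A^l\neq 0$; then $S$ has a null $0$-minimal left ideal $L$. By the left-right dual of Theorem \ref{thm:rightsocle}(2), $A^l$ is itself a null semigroup and an ideal of $S$. I would like to extract from this an actual null $0$-minimal (two-sided) ideal of $S$, contradicting the hypothesis. The natural candidate is the principal ideal $S^1aS^1$ for a suitable $a\in A^l\setminus\{0\}$, or more simply: since $A^l$ is an ideal of $S$ and a $0$-minimal (two-sided) ideal must exist inside it if $A^l\ne 0$ — indeed, any minimal nonzero ideal of $S$ contained in $A^l$ is a null semigroup (being a subsemigroup of the null semigroup $A^l$), hence is a null $0$-minimal ideal of $S$. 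The main subtlety is confirming that $A^l$, being an ideal, must contain a $0$-minimal ideal of $S$; this follows because the left socle theory gives $A^l$ as a union of $0$-minimal left ideals, and one can locate a $0$-minimal two-sided ideal among the ideals $S^1 a S^1$ for $0\ne a\in A^l$ — pick $a$ so that the left ideal $S^1a$ is $0$-minimal, note $S^1aS^1$ is contained in $A^l$ hence null, so $(S^1aS^1)^2 = 0$, which forces $aS^1 a = 0$ and makes the structure manageable; in fact $S^1aS^1$ itself is then readily checked to be $0$-minimal as a two-sided ideal, or one passes to a minimal ideal inside it. Either way we contradict the assumption of no null $0$-minimal ideals, so $A^l=0$.

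With $A^l=0$ in hand, the rest is bookkeeping. Since $\Sigma^l$ is the $0$-disjoint union of $A^l$ and $B^l$ (left-right dual of Theorem \ref{thm:rightsocle}(1)) and $A^l=0$, we get $\Sigma^l=B^l$. The condition ``$A^l_S$ satisfies ACCP'' in Theorem \ref{thm:leftsocle,ACCPR}(2) is then vacuous, as $A^l=0$. Thus condition (2) of Theorem \ref{thm:leftsocle,ACCPR} becomes exactly: $\Sigma^l=B^l$ is either $0$ or the $0$-direct union of completely $0$-simple semigroups $B_i$ $(i\in I)$, and $S/\Sigma^l$ satisfies ACCPR — which is precisely condition (2) of the present corollary. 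Hence the equivalence (1)$\Leftrightarrow$(2) here follows immediately from that of Theorem \ref{thm:leftsocle,ACCPR}.

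I expect the only real obstacle to be the first step, namely the argument that ``no null $0$-minimal two-sided ideals'' forces the null part $A^l$ of the left socle to vanish. One must be careful that a null $0$-minimal \emph{left} ideal need not by itself be a two-sided ideal; the work is in using that $A^l$ is an ideal and is null, together with the structure theory, to produce a genuine null $0$-minimal two-sided ideal of $S$ when $A^l\neq 0$. Everything after that is a direct translation of Theorem \ref{thm:leftsocle,ACCPR}.
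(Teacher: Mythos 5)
Your overall plan---deduce the corollary from Theorem \ref{thm:leftsocle,ACCPR} by arguing that the hypothesis forces $A^l=0$, whence $\Sigma^l=B^l$ and the condition ``$A^l_S$ satisfies ACCP'' becomes vacuous---is exactly the intended route (the paper states this corollary without proof), and the second, ``bookkeeping'' half of your write-up is correct. The genuine gap is the step you yourself flagged as the obstacle: the implication ``$S$ has no null $0$-minimal two-sided ideals $\Rightarrow$ $A^l=0$'' is not just hard to prove by your methods, it is false. A nonzero ideal of a semigroup need not contain any $0$-minimal two-sided ideal (the poset of nonzero ideals below it may be a strictly descending chain with intersection $\{0\}$, so ``any minimal nonzero ideal of $S$ contained in $A^l$'' need not exist), and your fallback claim that $S^1aS^1$ is ``readily checked to be $0$-minimal'' for $a$ in a null $0$-minimal left ideal also fails. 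Concretely, take Construction \ref{con} with $S=\langle a\rangle$ the free monogenic semigroup and $A=S_S$, and let $U=\mathcal{U}(S,S_S)$, so that $I=\{x_{a^n} : n\geq 1\}\cup\{0\}$. Since $Ux_{a^n}=0$, each $\{x_{a^n},0\}$ is a null $0$-minimal left ideal of $U$, so $A^l(U)=\Sigma^l(U)=I\neq 0$ is a nonzero null semigroup. On the other hand, the nonzero two-sided ideals of $U$ contained in $I$ are precisely the sets $\{x_{a^m} : m\geq n\}\cup\{0\}$, which form a strictly descending chain with no minimal member, and any nonzero ideal of $U$ not contained in $I$ properly contains one of them; hence $U$ has no $0$-minimal two-sided ideal at all, in particular no null one. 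Yet $U$ satisfies ACCPR by Proposition \ref{prop:con,ACCPR}, while condition (2) of the corollary fails for $U$. So under the two-sided reading of the hypothesis there is no way to complete your first step.

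The upshot is that the hypothesis must be read (or amended) as ``$S$ has no null $0$-minimal \emph{left} ideals,'' which is literally the assertion $A^l=0$ by the definition of $A^l$ as the union of $0$ and all null $0$-minimal left ideals. With that reading your first step is immediate, no structure theory is needed, and the rest of your argument is a complete and correct derivation from Theorem \ref{thm:leftsocle,ACCPR}. I would recommend stating this reduction explicitly rather than attempting to manufacture a null $0$-minimal two-sided ideal from $A^l\neq 0$, since the example above shows that such an ideal genuinely need not exist.
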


We shall find some necessary and sufficient conditions for a semigroup $S=\Sigma^l(S)$ to satisfy ACCPR, but first we provide the following lemma.

\begin{lemma}
\label{lem:S=left_socle}
Let $S=S^0$ be a semigroup such that $S=\Sigma^l=\Sigma^l(S),$ and let $\Sigma^r=\Sigma^r(S).$  Then the following statements hold.
\begin{enumerate}
\item If $A^l\neq0,$ then $\{a, 0\}$ is a 0-minimal right ideal of $S$ for each $a\in A^l\!\setminus\!\{0\}.$
\item $A^r=A^l,$ $B^r\subseteq B^l,$ and $B^r$ is an ideal of $S.$
\item $\Sigma^r$ is the 0-direct union of $A^r$ and $B^r.$
\item $S/\Sigma^r\cong B^l/B^r.$
\end{enumerate}
\end{lemma}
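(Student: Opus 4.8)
The plan is to prove the four parts in turn, using throughout the left-right duals of Theorems \ref{thm:SR} and \ref{thm:rightsocle} together with the hypothesis $S=\Sigma^l=A^l\cup B^l$ (a $0$-disjoint union); in particular I will repeatedly use that $A^l$ is a null ideal of $S$, that $B^l$ is a left ideal of $S$, and that $A^l\cap B^l=\{0\}$. For (1), fix $a\in A^l\setminus\{0\}$; I claim that $aS=0$, from which the statement follows at once, since then $aS^1=\{a,0\}$ is a right ideal, it is $0$-minimal because its only subsets containing $0$ are $\{0\}$ and itself, and it is null as $a^2\in aS=0$. To prove $aS=0$, take $s\in S=\Sigma^l$: if $s=0$ or $s\in A^l$ then $as=0$ since $A^l$ is null; otherwise $s$ lies in some globally idempotent $0$-minimal left ideal $L_0$, so $as\in SL_0\subseteq L_0\subseteq B^l$, while also $as\in A^lS\subseteq A^l$, whence $as\in A^l\cap B^l=\{0\}$.

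For (2), the inclusion $A^l\subseteq A^r$ is immediate from (1), each $\{a,0\}$ with $a\in A^l\setminus\{0\}$ being a null $0$-minimal right ideal. For the reverse, let $R$ be any $0$-minimal right ideal and fix $a\in R\setminus\{0\}$, so $aS^1=R$; since $S=\Sigma^l$ we have $a\in L:=S^1a$ for some $0$-minimal left ideal $L$, and associativity of set products gives $aL=a(S^1a)=(aS^1)a=Ra$. If $R$ is null then $Ra\subseteq R^2=0$, so $aL=0$; were $L$ globally idempotent we would have $a\in L=L^2=S^1aS^1a$, say $a=uava$ with $u,v\in S^1$, forcing $ava=a(va)\in aL=0$ and hence $a=0$, a contradiction, so $L$ is null and $a\in A^l$. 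Dually, if $R$ is globally idempotent and $L$ were null, then $aL\subseteq L^2=0$, so $Ra=0$, and $a\in R=R^2=aS^1aS^1$, say $a=auav$, forces $aua=a(ua)\in aL=0$ and $a=0$, again a contradiction, so $L$ is globally idempotent and $a\in B^l$. Thus $A^r=A^l$ and $B^r\subseteq B^l$. Finally, $B^r$ is a right ideal of $S$ by Theorem \ref{thm:rightsocle}; for the left-ideal property, take $s\in S$ and a globally idempotent $0$-minimal right ideal $R\subseteq B^r$, and recall (Section \ref{sec:prelim}) that $sR$ is then $0$ or a $0$-minimal right ideal of $S$. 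It cannot be a non-zero null one, since any non-zero element of it lies both in $A^r=A^l$ and in $SB^l\subseteq B^l$ (using $R\subseteq B^r\subseteq B^l$), contradicting $A^l\cap B^l=\{0\}$; hence $sR\subseteq B^r$, so $SB^r\subseteq B^r$ and $B^r$ is an ideal of $S$.

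For (3), $\Sigma^r$ is already the $0$-disjoint union of $A^r$ and $B^r$ by Theorem \ref{thm:rightsocle}, so it suffices to check $A^rB^r=B^rA^r=0$; but $A^r=A^l$ and $B^r$ are both ideals of $S$ by (2), so $A^lB^r\subseteq A^l\cap B^r\subseteq A^l\cap B^l=\{0\}$ and likewise $B^rA^l=\{0\}$. For (4), combining (2) and (3) gives $\Sigma^r=A^l\cup B^r$, hence $S\setminus\Sigma^r=(A^l\cup B^l)\setminus(A^l\cup B^r)=B^l\setminus B^r$; since $B^l$ is a subsemigroup of $S$ (being a left ideal) containing $B^r$ as an ideal, I would then verify that the map fixing $B^l\setminus B^r$ pointwise and identifying the two zeros is an isomorphism $S/\Sigma^r\to B^l/B^r$, the only point being that for $x,y\in B^l\setminus B^r$ the product $xy$ lies in $B^l$, so $xy\notin\Sigma^r$ exactly when $xy\notin B^r$.

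I expect the crux to be part (1): the right statement to aim for is $aS=0$ (not merely that $aS$ lands in a union of null $0$-minimal left ideals), and the leverage for it is the tension between $A^l$ being null and $B^l$ being a left ideal. Once (1) is in hand, parts (2)--(4) are largely bookkeeping with the $0$-(direct/disjoint)-union structure, the only mildly delicate point being the verification that $B^r$ is two-sided.
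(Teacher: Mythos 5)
Your proof is correct, and while it follows the same overall skeleton as the paper's (establish $A^lS=0$ to get (1) and $A^l\subseteq A^r$; prove the reverse inclusions; show $B^r$ is a left ideal; deduce (3) from both parts being ideals; verify (4) directly), the two key technical steps are done by genuinely different arguments. For $A^r\subseteq A^l$ and $B^r\subseteq B^l$, the paper takes $x\in A^r\cap B^l$ and argues inside the $0$-simple components of $B^l$ that $x=ux^2v$, whence $x\in(A^r)^2=0$; you instead exploit the identity $aL=(aS^1)a=Ra$ linking the $0$-minimal right ideal $R=aS^1$ and left ideal $L=S^1a$ through a common non-zero element $a$, and show $R$ is null if and only if $L$ is null. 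Your version is more elementary and, incidentally, sidesteps the delicate point in the paper's step that $x\,\jay\,x^2$ presupposes $x^2\neq 0$ (which fails precisely for the elements of $A^r$ under consideration). For the left-ideal property of $B^r$, the paper first identifies $B^r$ as the $0$-direct union of a subfamily $\{B_j : j\in J\}$ of the $0$-simple components of $B^l$ and then checks $sb\in B^r$ by cases; you instead invoke the quoted fact that $sR$ is $0$ or a $0$-minimal right ideal for $R$ globally idempotent, ruling out the null possibility via $A^l\cap B^l=0$. The paper's route has the side benefit of producing the decomposition $B^r=\bigcup_{j\in J}B_j$, which it reuses in the proof of Theorem \ref{thm:S=left_socle}; your route is shorter and more self-contained for the lemma itself. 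Parts (3) and (4) are handled essentially as in the paper.
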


\begin{proof}
By the left-right dual of Theorem \ref{thm:rightsocle}, $B^l$ is either 0 or the 0-direct union of 0-simple semigroups $B_i$ ($i\in I$).  Consider $x\in A^r\cap B^l$.  Since either $x=0$ or $x$ belongs to a 0-simple semigroup, we have $x\in J_x^2$, where $J_x$ denotes the $\jay$-class of $x.$  We have $J_x\subseteq A^r$ since $A^r$ is an ideal, and hence $x\in(A^r)^2=0,$ so $x=0.$  Thus $A^r\cap B^l=0,$ and hence $A^r\subseteq A^l.$  Since $A^l$ is an ideal of $S$ and $B^l$ is a left ideal of $S,$ it follows that $A^lB^l=0.$  Since $S=A^l\cup B^l$ and $(A^l)^2=0,$ we conclude that $A^lS=0.$  Therefore, if $A^l\neq0$ then $\{a, 0\}$ is a 0-minimal right ideal for each $a\in A^l\!\setminus\!\{0\}.$  Thus $A^l\subseteq A^r,$ and hence $A^r=A^l.$  Then $0=A^r\cap B^r=A^l\cap B^r,$ so $B^r\subseteq B^l.$ 

If $B^r=0,$ then it is clear that $B^r$ is an ideal of $S$ and that statements (3) and (4) hold, so we may assume that $B^r\neq0.$  Let 
$$J=\bigl\{j\in I : B_j\cap B^r\neq 0\bigr\}.$$
Consider $b\in B_j\cap B^r,$ $b\neq 0.$  We have that $B_j\subseteq SbS$ since $B_j$ is 0-simple, and hence $B_j\subseteq\Sigma^r$ as $\Sigma^r$ is an ideal.  We must have that $B_j\subseteq B^r,$ for otherwise we would have $b\in A^r$ (using the fact $B_j$ is 0-simple and $A^r$ is an ideal).  It follows that $B^r$ is the 0-direct union of $B_j$, $j\in J.$  

Now, $B^r$ is a right ideal of $S$ by Theorem \ref{thm:rightsocle}, so to prove that it is an ideal, it suffices to show that it is a left ideal.  So, let $s\in S$ and $b\in B^r.$  If $s\in A^l,$ then $sb\in A^lS=0.$  Suppose that $s\in B^l.$  We have that $s\in B_i$ and $b\in B_j$ for some $i\in I,$ $j\in J.$  If $i=j,$ then $sb\in B_j\subseteq B^r.$  If $i\neq j,$ then $sb\in B_iB_j=0.$  Thus $B^r$ is an ideal of $S.$

Since $A^r$ and $B^r$ are both ideals of $S,$ and $A^r\cap B^r=0,$ it follows that $A^rB^r=B^rA^r=0.$  Thus $\Sigma^r$ is the 0-direct union of $A^r$ and $B^r.$  

Since $B^r\subseteq B^l$ and $B^r$ is an ideal of $S,$ it is certainly an ideal of $B^l.$  Observing that the universe of $S/\Sigma^r$ is $(B^l\!\setminus\!B^r)\cup\{0\},$ it is easy to see that $S/\Sigma^r\cong B^l/B^r.$
\end{proof}

\begin{thm}
\label{thm:S=left_socle}
Let $S=S^0$ be a semigroup such that $S=\Sigma^l=\Sigma^l(S),$ and let $\Sigma^r=\Sigma^r(S).$  Then the following are equivalent: 
\begin{enumerate}
\item $S$ satisfies ACCPR;
\item $B^l$ is either 0 or the 0-direct union of completely 0-simple semigroups;
\item $\Sigma^r$ is either a null semigroup or the 0-direct union of a null semigroup and completely 0-simple semigroups, and either $\Sigma^r=S$ or $S/\Sigma^r$ is the 0-direct union of completely 0-simple semigroups.
\end{enumerate}
\end{thm}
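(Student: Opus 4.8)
The plan is to derive (1)$\Leftrightarrow$(2) directly from Theorem \ref{thm:leftsocle,ACCPR}, and then to prove (2)$\Leftrightarrow$(3) by transferring information between the left socle and the right socle via Lemma \ref{lem:S=left_socle}. First I would observe that, since $S=\Sigma^l$, the Rees quotient $S/\Sigma^l$ is the trivial semigroup and hence satisfies ACCPR. Moreover $A^lS=0$ (as established in the proof of Lemma \ref{lem:S=left_socle}), so every principal subact of $A^l_S$ is $\{0\}$ or of the form $\{a,0\}$ with $a\in A^l\setminus\{0\}$; the poset of such subacts has height at most two, so $A^l_S$ satisfies ACCP. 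Consequently the three-part condition~(2) of Theorem \ref{thm:leftsocle,ACCPR} collapses, in the present setting, to the single statement that $B^l$ is $0$ or a $0$-direct union of completely $0$-simple semigroups, which is condition~(2) of the theorem. This gives (1)$\Leftrightarrow$(2).

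For the implication (2)$\Rightarrow$(3) I would use that (from the proof of Lemma \ref{lem:S=left_socle}) $B^l$ is $0$ or the $0$-direct union of $0$-simple semigroups $B_i$ $(i\in I)$, that $B^r$ is the $0$-direct union of the $B_j$ with $j\in J:=\{j\in I:B_j\cap B^r\neq 0\}$, and that $A^r=A^l$ is null. If $B^l=0$, then $S=A^l=A^r=\Sigma^r$ is null and $\Sigma^r=S$, so (3) holds. Otherwise, assuming (2), each $B_i$ is completely $0$-simple; by Lemma \ref{lem:S=left_socle}(3), $\Sigma^r$ is the $0$-direct union of the null semigroup $A^r$ and the completely $0$-simple semigroups $B_j$ $(j\in J)$, and by Lemma \ref{lem:S=left_socle}(4), $S/\Sigma^r\cong B^l/B^r$. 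A routine check shows that the Rees quotient of the $0$-direct union $\bigsqcup_{i\in I}B_i$ by the $0$-direct summand $\bigsqcup_{j\in J}B_j$ is the $0$-direct union $\bigsqcup_{i\in I\setminus J}B_i$; hence either $I\setminus J=\emptyset$, in which case $B^r=B^l$ and so $\Sigma^r=A^r\cup B^r=A^l\cup B^l=S$, or $S/\Sigma^r$ is a $0$-direct union of completely $0$-simple semigroups. Either way (3) holds.

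For (3)$\Rightarrow$(2) the key point is a uniqueness statement: if a semigroup $T$ is a $0$-direct union of a null semigroup and $0$-simple semigroups, then its $0$-simple summands are uniquely determined, since each such summand $C$ equals $T^1aT^1$ for any $a\in C\setminus\{0\}$, while no nonzero element of a $0$-simple summand can lie in the null summand. Assuming (3), I may assume $B^l\neq 0$, so $B^l=\bigsqcup_{i\in I}B_i$ with each $B_i$ $0$-simple, $B^r=\bigsqcup_{j\in J}B_j$, and $B^l/B^r\cong\bigsqcup_{i\in I\setminus J}B_i$ as above. Lemma \ref{lem:S=left_socle}(3) exhibits the $B_j$ $(j\in J)$ as precisely the $0$-simple summands of $\Sigma^r$, so comparing with the decomposition of $\Sigma^r$ in (3) and invoking uniqueness shows each $B_j$ $(j\in J)$ is completely $0$-simple. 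If $\Sigma^r=S$, then $B^r=B^l$, hence $I\setminus J=\emptyset$ and we are done; otherwise $S/\Sigma^r\cong B^l/B^r\cong\bigsqcup_{i\in I\setminus J}B_i$ is, by (3), a $0$-direct union of completely $0$-simple semigroups, so by uniqueness again each $B_i$ $(i\in I\setminus J)$ is completely $0$-simple. In all cases every $B_i$ $(i\in I)$ is completely $0$-simple, which is (2).

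The main obstacle I expect is the bookkeeping in the last two paragraphs: identifying $B^l/B^r$ with the $0$-direct union of the leftover components $B_i$, $i\in I\setminus J$, and cleanly formulating and applying the uniqueness of $0$-direct-union decompositions into a null part and $0$-simple parts, all while keeping track of the degenerate cases $B^l=0$, $B^r=0$, $A^r=0$ and $\Sigma^r=S$ (for instance, when $\Sigma^r$ is null the index set $J$ is necessarily empty). The rest is a direct assembly of Theorem \ref{thm:leftsocle,ACCPR} and Lemma \ref{lem:S=left_socle}.
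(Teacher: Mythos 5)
Your proof is correct, and it rests on the same two pillars as the paper's: Theorem \ref{thm:leftsocle,ACCPR} specialised to $S=\Sigma^l$, and the bookkeeping of Lemma \ref{lem:S=left_socle}. The implications (1)$\Rightarrow$(2) and (2)$\Rightarrow$(3) are essentially identical to the paper's. Where you diverge is in closing the cycle of implications: the paper proves (3)$\Rightarrow$(1) directly, by noting that $S/A^r\cong B^l$ is a $0$-direct union of completely $0$-simple semigroups (hence satisfies ACCPR by Corollary \ref{cor:minrightideals}) and that $A^r_S$ is a union of $0$-simple subacts $\{a,0\}$ (hence satisfies ACCP by Corollary \ref{cor:simplesubacts}), and then invoking Corollary \ref{cor:idealext}; you instead prove (3)$\Rightarrow$(2) via a uniqueness statement for $0$-direct decompositions into a null part and $0$-simple parts (each $0$-simple summand $C$ is recovered as $T^1aT^1$ for any nonzero $a\in C$), and then return to (1) through Theorem \ref{thm:leftsocle,ACCPR}. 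Both routes work: the paper's is shorter because it reuses the ACCPR machinery already in place, whereas yours isolates a reusable structural fact (uniqueness of the $0$-simple constituents) and makes the identification of the $B_i$ with the summands appearing in condition (3) completely explicit, at the cost of having to verify that $CaC=C$ for nonzero $a$ in a $0$-simple semigroup and to track the degenerate cases ($J=\emptyset$, $A^r=0$, $\Sigma^r=S$) that you already flag. Your further observation that (2)$\Rightarrow$(1) also falls out of Theorem \ref{thm:leftsocle,ACCPR} --- since $S/\Sigma^l$ is trivial and $A^l_S$ satisfies ACCP because $A^lS=0$ forces every principal subact of $A^l_S$ to be $\{0\}$ or $\{a,0\}$ --- is a mild streamlining that the paper does not exploit, as it establishes (2)$\Rightarrow$(1) only via (3).
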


\begin{proof}
(1)$\Rightarrow$(2) follows immediately from Theorem \ref{thm:leftsocle,ACCPR}.

(2)$\Rightarrow$(3).  By Lemma \ref{lem:S=left_socle}, $A^r=A^l,$ $B^r\subseteq B^l,$ and $\Sigma^r$ is the 0-direct union of $A^r$ and $B^r.$  If $B^r=0,$ then $\Sigma^r=A^r$ is a null semigroup.  If $B^r\neq0,$ then $B^l\neq0,$ so $B^l$ is the 0-direct union of completely 0-simple semigroups $B_i$ ($i\in I$).  As in the proof of Lemma \ref{lem:S=left_socle}, there exists a set $J\subseteq I$ such that $B^r$ is the 0-direct union of $B_j$, $j\in J.$  By Lemma \ref{lem:S=left_socle}, we have that $S/\Sigma^r\cong B^l/B^r.$  Thus, if $\Sigma^r\neq S$ then $S/\Sigma^r$ is (isomorphic to) the 0-direct union of $B_i,$ $i\in J\!\setminus\!I.$

(3)$\Rightarrow$(1).  We have that $S/A^r\cong B^r$ is a 0-direct union of  completely 0-simple semigroups, and hence satisfies ACCPR by Corollary \ref{cor:minrightideals}.  Therefore, to prove that $S$ satisfies ACCPR, by Corollary \ref{cor:idealext} it suffices to show that $A^r_S$ is noetherian (as an $S$-act).  If $A^r_S=0$ then it is obviously noetherian.  Otherwise, by Lemma \ref{lem:S=left_socle}, we have that $A^r_S$ is the union of 0-simple subacts $\{a, 0\}$ ($a\in A^r_S$), and hence $A^r_S$ is noetherian by Corollary \ref{cor:simplesubacts}. 
\end{proof}

\section{Right Noetherian Semigroups\nopunct}
\label{sec:rn}

In this section we consider right noetherian semigroups.  Paralleling the previous section, this section splits into two parts, the first of which deals with ideals in general, and the section concerns minimal and 0-minimal ideals.

\subsection{General ideals}

As mentioned in Section \ref{sec:ACCPR}, unlike the property of satisfying ACCPR, the property of being right noetherian is not closed under ideals.  The following result provides a condition under which ideals, and more generally ($m, n$)-ideals, inherit the property of being right noetherian.  In what follows, a right ideal $I$ of a semigroup $A$ is {\em decomposable} (in $A$) if $I=IA.$

\begin{prop}
\label{prop:bi-ideal,wrn}
Let $S$ be a semigroup, let $A$ be an ($m, n$)-ideal of $S,$ and suppose that every right ideal of $A$ is decomposable in $A.$  If $S$ is right noetherian, then so is $A.$
\end{prop}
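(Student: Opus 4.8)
The plan is to reduce, via Corollary~\ref{cor:principal}, to a statement about antichains: since $S$ satisfies ACCPR, Theorem~\ref{thm:(m,n)-ideal} already gives that $A$ satisfies ACCPR, so by Corollary~\ref{cor:principal} it suffices to show that $A$ contains no infinite antichain of principal right ideals.

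First I would extract the only consequence of the decomposability hypothesis that is actually used. For each $a\in A$, the set $aA^1$ is a right ideal of $A$ (as $A^1A=A^2\cup A=A$), so by hypothesis $aA^1=(aA^1)A=a(A^1A)=aA$; in particular $a\in aA$, and iterating gives $a\in aA^k$ for every $k\geq 1$. Thus every $a\in A$ has, for each $k$, a factorisation $a=aw$ with $w\in A^k$. Now suppose for a contradiction that $\{a_iA^1:i\in\mathbb{N}\}$ is an infinite antichain of principal right ideals of $A$; I claim that $\{a_iS^1:i\in\mathbb{N}\}$ is then an infinite antichain of principal right ideals of $S$, contradicting Corollary~\ref{cor:principal} applied to $S$. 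The crux is the implication that $a_iS^1\subseteq a_jS^1$ forces $a_iA^1\subseteq a_jA^1$: writing $a_i=a_js$ with $s\in S^1$, the case $a_i=a_j$ is trivial, so assume $a_i\neq a_j$, which forces $s\in S$; choosing factorisations $a_i=a_iw$ with $w\in A^n$ and $a_j=a_jv$ with $v\in A^m$, we get
$$a_i=a_iw=(a_js)w=(a_jv)sw=a_j(vsw),$$
and $vsw\in A^mSA^n\subseteq A$ since $A$ is an $(m,n)$-ideal, so $a_i\in a_jA\subseteq a_jA^1$. Applying this with $(i,j)$ and with $(j,i)$, comparability of $a_iS^1$ and $a_jS^1$ would force comparability of $a_iA^1$ and $a_jA^1$; hence for distinct indices $a_iS^1$ and $a_jS^1$ are incomparable (in particular distinct), so $\{a_iS^1:i\in\mathbb{N}\}$ really is an infinite antichain.

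Putting this together, $A$ satisfies ACCPR and has no infinite antichain of principal right ideals, hence $A$ is right noetherian by Corollary~\ref{cor:principal}. I do not expect a genuine obstacle: the only delicate point is the exponent bookkeeping in the displayed computation — one must absorb a factor from $A^m$ on the left and from $A^n$ on the right of the stray element $s\in S$ before the identity $A^mSA^n\subseteq A$ can be applied — together with the routine checks that $aA^1$ is a right ideal of $A$ and that the case where $s$ is the adjoined identity is harmless. (An equally short alternative is to show directly that each right ideal $I$ of $A$ is finitely generated, using a finite generating set $\{a_jt_j:1\leq j\leq k\}$ of the right ideal $IS^1$ of $S$ with $a_j\in I$, $t_j\in S^1$, and verifying via $I=IA^m=IA^n$ and $A^mSA^n\subseteq A$ that $\{a_1,\dots,a_k\}$ generates $I$ over $A^1$.)
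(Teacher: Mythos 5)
Your proof is correct, but it takes a genuinely different route from the paper's. The paper argues directly via Corollary~\ref{cor:principal}(2): given a right ideal $I$ of $A$, it uses $I=IA=IA^m=IA^n$ together with a finite generating set $X$ of the right ideal $IS^1$ of $S$ to manufacture a finite set $Y\subseteq I$ with $I=YA$, the key computation being $a=y_x(usv)\in Y(A^mS^1A^n)\subseteq YA$. You instead invoke Theorem~\ref{thm:(m,n)-ideal} to get ACCPR for $A$ unconditionally, and use Corollary~\ref{cor:principal}(4) to reduce the remaining work to excluding infinite antichains of principal right ideals; your transfer step ($a_iS^1\subseteq a_jS^1$ plus $a_i=a_iw$, $a_j=a_jv$ with $w\in A^n$, $v\in A^m$ gives $a_i=a_j(vsw)\in a_jA$) is essentially the same absorption trick as the paper's displayed computation, just applied to principal rather than arbitrary right ideals. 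Your route has the conceptual merit of isolating exactly what the decomposability hypothesis buys (it is needed only to kill infinite antichains, since ACCPR is inherited by $(m,n)$-ideals with no hypothesis at all), and it makes visible that only decomposability of principal right ideals is used --- though this is no real weakening, since $I=\bigcup_{a\in I}aA^1$ shows the two forms of the hypothesis are equivalent. The paper's route is more self-contained, resting only on the finite-generation characterisation rather than on both the antichain criterion and the earlier ACCPR theorem.
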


\begin{proof}
Let $I$ be a right ideal of $A.$  Then $I=IA$ by assumption.  This implies that $I=IA^m=IA^n.$  Since $S$ is right noetherian and $IS^1$ is a right ideal of $S,$ there exists a finite set $X\subseteq I$ such that $IS^1=XS^1.$  For each $x\in X$ choose $y_x\in I$ such that $x\in y_xA^m,$ and let $Y=\{y_x : x\in X\}.$  We claim that $I=YA.$  Clearly $YA\subseteq I.$  Now consider $a\in I.$  Then $a=bv$ for some $b\in I$ and $v\in A^n,$ and $b=xs$ for some $x\in X$ and $s\in S^1.$  Now, $x=y_xu$ for some $u\in A^m.$  Therefore, we have that
$$a=y_x(usv)\in Y(A^mS^1A^n)\subseteq YA,$$
using the fact that $A$ is an ($m, n$)-ideal of $S.$  Thus $I\subseteq YA,$ and hence $I=YA,$ as desired.
\end{proof}

\begin{cor}
\label{cor:leftideal,wrn}
Let $S$ be a semigroup, and suppose that $A$ is a left ideal of $S$ such that every element of $A$ is regular in $S.$  If $S$ is right noetherian, then so is $A.$
\end{cor}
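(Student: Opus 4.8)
The plan is to derive this as a special case of Proposition \ref{prop:bi-ideal,wrn}. First I would note that a left ideal $A$ of $S$ is in particular an $(m,n)$-ideal for every $m,n\in\mathbb{N}$: since $SA\subseteq A$ we have $SA^n\subseteq A^n$ for all $n\geq 1$, and $A$ is a subsemigroup (as $AA\subseteq SA\subseteq A$), so $A^mSA^n\subseteq A^mA^n\subseteq A$. In particular $A$ is a bi-ideal, i.e.\ the case $m=n=1$ of Proposition \ref{prop:bi-ideal,wrn}.

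The substantive point is to check that every right ideal of $A$ is decomposable in $A$; that is, $I=IA$ for every right ideal $I$ of $A$. The inclusion $IA\subseteq I$ is immediate. For the reverse inclusion, take $a\in I$. By hypothesis $a$ is regular in $S$, so there exists $b\in S$ with $a=aba$. Since $A$ is a left ideal of $S$, we have $ba\in SA\subseteq A$, and therefore $a=a(ba)\in IA$. Hence $I\subseteq IA$, so $I=IA$ as required.

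With the decomposability hypothesis verified, Proposition \ref{prop:bi-ideal,wrn} applied with $m=n=1$ yields at once that $A$ is right noetherian whenever $S$ is. I do not expect any genuine obstacle in this argument; the only step that needs care is the one-line observation that $ba$ returns to $A$, which is precisely the place where the assumption that $A$ is a left ideal (rather than merely a bi-ideal) is used, together with the regularity of the elements of $A$ in $S$.
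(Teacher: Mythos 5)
Your argument is correct and coincides with the paper's own proof: both verify that every right ideal $I$ of $A$ satisfies $I=IA$ by using regularity of $a\in I$ in $S$ together with $ba\in SA\subseteq A$, and then invoke Proposition \ref{prop:bi-ideal,wrn}. The only difference is that you spell out why a left ideal is an $(m,n)$-ideal, a fact the paper records separately in the preliminaries.
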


\begin{proof}
Let $I$ be a right ideal of $A.$  For any $a\in I$ there exists $b\in S$ such that $a=aba.$  Since $A$ is a left ideal, we have $ba\in A,$ so $a\in IA.$  Thus $I=IA$ is decomposable.  Hence, by Proposition \ref{prop:bi-ideal,wrn}, $A$ is right noetherian.
\end{proof}

By Propositions \ref{prop:idealext,wrn} and \ref{prop:bi-ideal,wrn} we have:

\begin{cor}
Let $S$ be a semigroup, let $I$ be an ideal of $S,$ and suppose that every right ideal of $I$ is decomposable.  Then $S$ is right noetherian if and only if both $I$ and $S/I$ are right noetherian.
\end{cor}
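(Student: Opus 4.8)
The plan is to assemble this corollary from results already in hand; both directions reduce to previously established propositions, with the decomposability hypothesis entering only in one place.

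For the forward direction, assume $S$ is right noetherian. Since $I$ is an ideal, it is in particular a $(1,1)$-ideal of $S$ (indeed $I^1SI^1 = ISI \subseteq I$ because $SI\subseteq I$). By hypothesis every right ideal of $I$ is decomposable in $I$, so Proposition \ref{prop:bi-ideal,wrn} (applied with $m=n=1$) gives that $I$ is right noetherian. Meanwhile, $S/I$ is the quotient of $S$ by the Rees congruence associated to $I$, so Lemma \ref{lem:quotient} gives that $S/I$ is right noetherian. This disposes of the implication "$S$ right noetherian $\Rightarrow$ $I$ and $S/I$ right noetherian".

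For the converse, assume both $I$ and $S/I$ are right noetherian. Then Proposition \ref{prop:idealext,wrn} immediately yields that $S$ is right noetherian; note that this direction does not use decomposability at all.

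There is no serious obstacle here: the work has all been done in Propositions \ref{prop:bi-ideal,wrn} and \ref{prop:idealext,wrn} and Lemma \ref{lem:quotient}. The only point worth flagging is that the decomposability assumption is exactly what is needed to upgrade "$I$ inherits the property when $S$ has it" — which fails for general ideals — to a genuine equivalence, and it is used solely in the forward direction via Proposition \ref{prop:bi-ideal,wrn}.
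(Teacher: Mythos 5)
Your proof is correct and matches the paper's intended argument exactly: the forward direction combines Proposition \ref{prop:bi-ideal,wrn} (with $I$ viewed as an $(m,n)$-ideal) and Lemma \ref{lem:quotient}, and the converse is Proposition \ref{prop:idealext,wrn}, with decomposability used only in the forward direction. The only cosmetic quibble is that the $(1,1)$-ideal condition reads $ISI\subseteq I$ rather than $I^1SI^1$, but this does not affect the argument.
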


Recall that a semigroup is strongly right noetherian if its poset of right congruences satisfies the ascending chain condition.  The following result, due to Kozhukhov, describes the non-null principal factors of a strongly right noetherian semigroup.

\begin{prop}\cite[Lemma 1.3]{Kozhukhov:1980}
\label{prop:pf}
Any (0-)simple principal factor of a strongly right noetherian semigroup is completely (0-)simple and has only finitely many $\ar$-classes.
\end{prop}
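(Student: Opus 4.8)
The plan is to reduce to the situation of a $0$-simple ideal sitting inside a strongly right noetherian semigroup and then invoke the results of Section~\ref{sec:ACCPR}. First, if $J=\K$ its principal factor is $\K$ itself; passing to $S^0$ (which is again strongly right noetherian, since adjoining a zero creates no new infinite ascending chain of right congruences), $\K^0$ becomes a $0$-simple $0$-minimal ideal of a strongly right noetherian semigroup, and $\K$ is completely simple with finitely many $\ar$-classes if and only if $\K^0$ is completely $0$-simple with finitely many $\ar$-classes, so this case reduces to the $0$-simple one. If $J\neq\K$, set $M=S^1xS^1$ for some $x\in J$ and $S'=S/(M\setminus J)$; the Rees quotient of a strongly right noetherian semigroup by an ideal is strongly right noetherian (its right congruences being exactly the right congruences of the original semigroup that contain the Rees congruence, ordered by inclusion), so $S'$ is strongly right noetherian, and $P:=J\cup\{0\}$ is a $0$-simple ideal of $S'$ (it is the principal factor of $J$) which is moreover $0$-minimal as an ideal of $S'$, because the only $\jay$-class of $S'$ strictly below $J$ is $\{0\}$. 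So in both cases we have a strongly right noetherian semigroup $S'$ with a zero and a $0$-simple $0$-minimal ideal $P$ of $S'$, and it remains to show that $P$ is completely $0$-simple with finitely many $\ar$-classes.

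Being strongly right noetherian, $S'$ is right noetherian and in particular satisfies ACCPR. By Theorem~\ref{thm:0-simple}, $P$ is completely $0$-simple exactly when it contains a globally idempotent $0$-minimal left ideal of $S'$; and any $0$-minimal left ideal $L$ of $S'$ with $L\subseteq P$ is automatically globally idempotent, since otherwise $L^2=0$ and $S'^1LS'^1$ would be a nonzero null-squared ideal of $S'$ contained in $P$, hence equal to $P$ by $0$-minimality of $P$, contradicting $P^2\neq0$. So it suffices to produce a single $0$-minimal left ideal of $S'$ inside $P$. Once this is done, $P$ is completely $0$-simple, hence regular, so every $a\in P$ satisfies $a=ae$ for some $e\in P$ and therefore $aS'=aP$; consequently the $0$-minimal right ideals of $P$ are $0$-minimal (in particular principal) right ideals of $S'$, and, being pairwise incomparable, they form an antichain of principal right ideals of $S'$, which is finite because $S'$ is right noetherian (Corollary~\ref{cor:principal}). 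As $P$ is the union of these finitely many $0$-minimal right ideals together with $0$ (\cite[Theorem~2.33]{Clifford:1961}), it has finitely many $\ar$-classes, and hence is right noetherian by Corollary~\ref{cor:R-classes}.

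Thus the whole proof turns on producing a $0$-minimal left ideal of $S'$ inside $P$, and this is the step I expect to be the main obstacle: it extracts a left-sided conclusion from the right-sided hypothesis, so right noetherianity alone will not suffice (the bicyclic monoid with a zero adjoined is a $0$-simple, right noetherian semigroup with no $0$-minimal one-sided ideals), and the ascending chain condition on right \emph{congruences} must be used essentially. The plan is contrapositive: if $P$ contains no $0$-minimal left ideal of $S'$, choose nonzero $a_{n+1}$ in successively smaller nonzero left ideals of $S'$ contained in $S'^1a_n$, obtaining an infinite strictly descending chain $S'^1a_0\supsetneq S'^1a_1\supsetneq\cdots$ with $a_{n+1}\in S'a_n$, and associate to each the relation $\nu_n=\{(x,y)\in S'\times S':a_nx=a_ny\}$ --- equivalently, $\{(x,y):bx=by\text{ for all }b\in S'^1a_n\}$ --- which is a right congruence on $S'$. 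Because $a_{n+1}\in S'a_n$ we have $\nu_0\subseteq\nu_1\subseteq\cdots$, and the delicate point --- where the strictness of the containments $S'^1a_{n+1}\subsetneq S'^1a_n$, together with a careful choice of the $a_n$ and, if necessary, a passage to a subsequence, must be leveraged --- is to guarantee that infinitely many of these inclusions are strict, so that left multiplication by a witness for $a_{n+1}\in S'a_n$ genuinely identifies elements that $a_n$ keeps apart. An infinite strictly ascending chain of right congruences on $S'$ then contradicts strong right noetherianity, completing the proof.
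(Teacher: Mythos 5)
A preliminary remark: the paper does not prove this statement --- it is imported as \cite[Lemma 1.3]{Kozhukhov:1980} --- so there is no in-paper argument to measure yours against; what follows assesses your proposal on its own terms. The framing is good: the reduction to a $0$-simple $0$-minimal ideal $P$ of a strongly right noetherian semigroup $S'$ with zero is correct (both the passage to $S^0$ when $J=\K$ and the Rees quotient $S/(M\setminus J)$ when $J\neq\K$ preserve strong right noetherianity for essentially the reasons you give), the observation that any $0$-minimal left ideal of $S'$ inside $P$ is forced to be globally idempotent is a correct and necessary supplement before invoking Theorem \ref{thm:0-simple}, and the derivation of ``finitely many $\ar$-classes'' from complete $0$-simplicity via the antichain of $0$-minimal right ideals and Corollary \ref{cor:principal} is sound.

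The proof nevertheless has a genuine gap, and it sits exactly where you say it does: the existence of a $0$-minimal left ideal of $S'$ inside $P$ rests on a sketch whose decisive step is left open. The congruences $\nu_n=\{(x,y):a_nx=a_ny\}$ do ascend along your chain, but nothing you have said forces even one inclusion to be strict: $\nu_n=\nu_{n+1}$ merely says that left multiplication by a witness $s$ with $a_{n+1}=sa_n$ is injective on pairs from $a_nS'^1$, which is entirely compatible with $S'^1a_{n+1}\subsetneq S'^1a_n$. Indeed, in the free monogenic semigroup (which is strongly right noetherian) the chain $S^1a\supsetneq S^1a^2\supsetneq\cdots$ satisfies $a^{n+1}\in Sa^n$ while $\nu_{a^n}=\Delta$ for every $n$ by cancellativity, so the implication ``strictly descending principal left ideals yield strictly ascending $\nu_n$'' is false in general; it can only be rescued by exploiting the $0$-simplicity of $P$ (all of whose nonzero elements are $\jay$-related), and you have not indicated how that hypothesis enters, nor whether these particular congruences are even the right ones. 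Since this left-from-right extraction is the entire content of Kozhukhov's lemma --- everything else in your write-up is bookkeeping around it --- the proposal cannot be accepted as a proof as it stands.
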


From Proposition \ref{prop:pf} and Corollary \ref{cor:minrightideals} we immediately deduce:

\begin{cor}
Every non-null principal factor of a strongly right noetherian semigroup is right noetherian.
\end{cor}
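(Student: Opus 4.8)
The plan is to split the argument according to the type of the principal factor $P$: recall that $P$ is either the kernel $\K$ (and hence simple), or $0$-simple, or null. In the first two cases Proposition~\ref{prop:pf} does almost all the work: it tells us that $P$ is completely $(0\text{-})$simple with only finitely many $\ar$-classes. I would then observe that a completely $(0\text{-})$simple semigroup is the union of its $(0\text{-})$minimal right ideals, and that these are precisely its $\ar$-classes (together with $0$ in the $0$-simple case); since there are finitely many of them, Corollary~\ref{cor:minrightideals} gives that $P$ is right noetherian. (One could equally quote Corollary~\ref{cor:R-classes}, but the statement is phrased so as to pass through Corollary~\ref{cor:minrightideals}.)

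This leaves the case of a null principal factor $P=J\cup\{0\}$, where $J$ is the underlying $\jay$-class; fix $x\in J$, so that $P=(S^1xS^1)/\big((S^1xS^1)\setminus J\big)$. Here I would first note that a null semigroup with zero is the union of the $0$-minimal right ideals $\{n,0\}$ ($n\neq 0$), so by Corollary~\ref{cor:minrightideals} it is right noetherian if and only if it is finite; thus everything reduces to showing that $J$ is finite. The first step towards this is that only finitely many $\ar$-classes of $S$ are contained in $J$: since $S$ is strongly right noetherian it is in particular right noetherian, so by Corollary~\ref{cor:principal} it satisfies ACCPR and has no infinite antichain of principal right ideals; as every infinite poset contains an infinite chain or an infinite antichain, and an infinite chain of principal right ideals would violate ACCPR while an infinite antichain would violate the other condition, the poset of $\ar$-classes contained in $J$ is finite. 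Hence $J$ is a finite union of $\ar$-classes, and it suffices to prove each such $\ar$-class $R\subseteq J$ is finite.

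For a fixed $\ar$-class $R\subseteq J$ and $a\in R$, one checks that $R\subseteq aS^1$ and that $aS^1\setminus R$ is a subact of $(aS^1)_S$ (anything $\ar$-below an element of $aS^1\setminus R$ is strictly $\ar$-below $a$, hence outside $R$). Since $S$ is right noetherian, $S_S$ is noetherian, so $(aS^1)_S$ is noetherian, and therefore so is the Rees quotient $Q:=(aS^1)_S/(aS^1\setminus R)_S$ by Proposition~\ref{prop:acts}; its universe is $R\cup\{0\}$. Using that $P$ is null (so $bs$ drops below $J$ for $b,s\in J$) together with a $\jay$-order argument (if $b\in R$, $s\in S^1xS^1$ and $bs\in J$, then in fact $s\in J$), one sees that $b\cdot s=0$ in $Q$ for every $b\in R$ and every $s\in S^1xS^1$; in other words all of the nontrivial action of $Q$ comes from elements of $S$ lying strictly $\jay$-above $x$.

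The crux — and the step I expect to be by far the hardest — is to deduce from this that $R$ is finite. Since $Q$ is noetherian it is finitely generated, so $R$ is built up from a finite subset by right multiplication by elements of $S\setminus S^1xS^1$, each such ``productive'' element lying strictly $\jay$-above $x$; the task is then to use the ascending chain condition on right \emph{congruences} of $S$ — the condition on right ideals alone being too weak, as the paper stresses — to forbid an infinite $R$. I would attempt this either by manufacturing an infinite strictly ascending chain of right congruences on $S$ from a carefully chosen nested family of finite subsets of $R$ (the difficulty being to choose the subsets so that the chain is genuinely \emph{strict}, since the induced congruences on $P$ can collapse a new element onto an old one), or by transferring the problem to the strongly right noetherian Rees quotient $S/\big((S^1xS^1)\setminus J\big)$ and arguing there, perhaps importing a device from Kozhukhov's proof of Proposition~\ref{prop:pf}.
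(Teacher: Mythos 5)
Your first paragraph is exactly the paper's argument: the corollary is derived there directly from Proposition \ref{prop:pf} together with Corollary \ref{cor:minrightideals}, so on the simple and 0-simple principal factors you and the paper coincide. The substance of your proposal is therefore the null case, and there it has two concrete defects. First, your reduction to ``only finitely many $\ar$-classes of $S$ are contained in $J$'' is unjustified: ACCPR forbids infinite \emph{ascending} chains only, so a poset of $\ar$-classes satisfying the ascending chain condition and containing no infinite antichain may still be infinite --- an infinite \emph{descending} chain violates neither condition (e.g.\ $(\mathbb{N},+)$ is right noetherian with infinitely many $\ar$-classes). Second, and decisively, the step you yourself identify as the crux is never carried out, so the null case is simply not proved.

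You should not expect to close that gap, because the null case is false: the corollary is correct only for the (0-)simple principal factors, which is precisely what Proposition \ref{prop:pf} covers and precisely what your first paragraph establishes. Take $G=(\mathbb{Z},+)$ and $U=\mathcal{U}(G,G_G)$ as in Construction \ref{con}. Any right congruence on $U$ restricts on $\mathbb{Z}$ and on $J=\{x_a : a\in\mathbb{Z}\}$ to translation-invariant equivalences, i.e.\ to congruences modulo subgroups of $\mathbb{Z}$; the remaining data (which of the two parts lies in the class of $0$, and the cross-relations between $\mathbb{Z}$ and $J$, which form a coset of one of those subgroups) is likewise monotone along any ascending chain and stabilises. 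Since $\mathbb{Z}$ satisfies the ascending chain condition on subgroups, $U$ is strongly right noetherian. Yet $J$ is a single null $\jay$-class of $U$ (each $x_a$ generates the principal ideal $J\cup\{0\}$, and $x_ax_b=0$), so its principal factor is an infinite null semigroup, which is not right noetherian by Corollary \ref{cor:minrightideals}. So your instinct that the null factors need separate treatment is correct and exposes a genuine oversight in the ``immediate'' deduction, but the resolution is to restrict the statement to (0-)simple principal factors, not to prove the null case.
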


\begin{cor}
\label{cor:pf}
Let $S$ be a semigroup with an ideal $I$ such that for every $\jay$-class $J\subseteq I$ the principal factor of $J$ is either simple or 0-simple.  If $S$ is strongly right noetherian, then $I$ is regular and hence right noetherian.
\end{cor}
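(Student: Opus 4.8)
The plan is to prove that $I$ is a regular semigroup and then appeal to a known inheritance result for the right noetherian property. Since $I$ is an ideal, it is a union of $\jay$-classes of $S$; so fix $x\in I$ and let $J$ be its $\jay$-class, noting $J\subseteq I$. Because the principal ideal $S^1xS^1$ is contained in $I$, the principal factor $P_J$ of $J$ is a principal factor of $S$, and by hypothesis $P_J$ is simple or $0$-simple. As $S$ is strongly right noetherian, Proposition \ref{prop:pf} upgrades this: $P_J$ is completely (0-)simple, and hence a regular semigroup.

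Next I would transfer regularity of $P_J$ back to $x\in S$. If $J=\K(S)$, then $P_J=J$ is itself a regular semigroup, so $x=xyx$ for some $y\in J\subseteq I$. Otherwise $P_J=S^1xS^1/(S^1xS^1\setminus J)$, whose non-zero elements correspond bijectively to the elements of $J$; writing $\bar a$ for the image of $a\in J$, regularity of $P_J$ gives $\bar x=\bar x\,\bar y\,\bar x$ for some non-zero $\bar y$, say $\bar y=\bar z$ with $z\in J$. Since $\bar x\neq 0$, unwinding the Rees-quotient multiplication forces $xz,xzx\in J$ and $\overline{xzx}=\bar x$, whence $xzx=x$ with $z\in J\subseteq I$. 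In either case $x$ is regular in $I$; as $x$ was arbitrary, $I$ is a regular semigroup.

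Finally, since $S$ is strongly right noetherian it is right noetherian, and $I$, being a regular semigroup, is a regular subsemigroup of $S$; hence $I$ is right noetherian by Proposition \ref{prop:reg,rn}. (One could equally invoke Corollary \ref{cor:leftideal,wrn}, viewing $I$ as a left ideal all of whose elements are regular in $S$.)

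The main obstacle I anticipate is the bookkeeping in the transfer step: one must check carefully that a product equalling a non-zero element in the Rees quotient $P_J$ genuinely lifts to the honest identity $xzx=x$ in $S$ (and not merely to an equality of classes), and the case $J=\K(S)$ must be treated separately since then $P_J$ has no zero. It is also worth stressing where the hypothesis is used: it is precisely what excludes \emph{null} principal factors, so that Proposition \ref{prop:pf} applies and yields regularity, rather than leaving a null — hence non-regular — block inside $I$.
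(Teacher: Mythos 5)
Your proposal is correct and follows essentially the same route as the paper: the hypothesis rules out null principal factors, Proposition \ref{prop:pf} upgrades each (0-)simple factor to completely (0-)simple and hence regular, regularity is pulled back from the Rees quotients to conclude that $I$ is a regular semigroup, and Proposition \ref{prop:reg,rn} then gives that $I$ is right noetherian. The only difference is that you spell out the lifting of $\bar x=\bar x\,\bar z\,\bar x$ to $xzx=x$ in $S$, which the paper leaves implicit.
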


\begin{proof}
The ideal $I$ is a union of $\jay$-classes.  For every $\jay$-class $J\subseteq I,$ its principal factor is either completely simple or completely 0-simple by Proposition \ref{prop:pf}.  It follows that element of $I$ is regular (in $I$), so $I$ is a regular semigroup.  Hence, by Proposition \ref{prop:reg,rn}, $I$ is right noetherian.
\end{proof}

A semigroup is said to be {\em semisimple} if each of its principal factors is simple or 0-simple.  If a semigroup has a null principal factor, then the non-zero elements of that principal factor are not regular.  Thus regular semigroups are semisimple.  This fact, together with Corollary \ref{cor:pf}, yields:

\begin{cor}
Let $S$ be a strongly right noetherian semigroup.  Then $S$ is semisimple if and only if it is regular, in which case every ideal of $S$ is right noetherian.
\end{cor}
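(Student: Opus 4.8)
The plan is to prove the final corollary by combining the preceding results in a short chain. The statement asserts that for a strongly right noetherian semigroup $S$, semisimplicity is equivalent to regularity, and that if these hold then every ideal of $S$ is right noetherian.

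First I would observe that one direction is already recorded in the text: regular semigroups are semisimple, and this holds without any chain condition. So it remains to prove that a strongly right noetherian semigroup that is semisimple must be regular. For this I would invoke Proposition \ref{prop:pf}: since $S$ is semisimple, every principal factor of $S$ is either simple or $0$-simple, and since $S$ is strongly right noetherian, Proposition \ref{prop:pf} upgrades each such principal factor to being completely (0-)simple. A completely (0-)simple semigroup is regular (its nonzero elements all lie in maximal subgroups, hence are regular), so every nonzero element of every principal factor of $S$ is regular. Since $S$ is the union of its $\jay$-classes, and an element $a$ lying in a $\jay$-class $J$ is regular in $S$ precisely when it is regular in the principal factor of $J$ (the witness $b$ with $a=aba$ can be taken in $S^1 a S^1$, and multiplication in the principal factor agrees with that in $S$ on products that stay inside $J$), it follows that every element of $S$ is regular. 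Hence $S$ is regular, establishing the equivalence.

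For the final clause, assume $S$ is strongly right noetherian and semisimple (equivalently regular). Let $I$ be an ideal of $S$. Then $I$ is a union of $\jay$-classes of $S$, and by the argument above (or directly by Corollary \ref{cor:pf}, whose hypothesis is exactly that every principal factor of a $\jay$-class contained in $I$ is simple or $0$-simple), $I$ is a regular semigroup. Now a strongly right noetherian semigroup is right noetherian by \cite[Lemma 2.7]{Miller:2020}, so $S$ is right noetherian, and $I$ is a regular subsemigroup of $S$, whence $I$ is right noetherian by Proposition \ref{prop:reg,rn}.

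I do not anticipate a genuine obstacle here, since the corollary is essentially an assembly of Proposition \ref{prop:pf}, Corollary \ref{cor:pf}, and Proposition \ref{prop:reg,rn}. The one point requiring a little care is the passage from ``every nonzero element of every principal factor is regular'' to ``every element of $S$ is regular'' — i.e.\ checking that regularity in a principal factor (a Rees quotient $S^1xS^1/(S^1xS^1\setminus J)$) pulls back to regularity in $S$. This is routine: if $\bar a = \bar a\,\bar b\,\bar a$ in the principal factor with $\bar a \neq 0$, then $aba$ and $a$ lie in the same $\jay$-class as $a$, the products defining $\bar a\bar b\bar a$ are all nonzero in the quotient, hence computed as in $S$, giving $a = aba$ in $S$. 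Since the text already asserts the relevant facts (``This fact, together with Corollary \ref{cor:pf}, yields''), I would keep the proof brief, citing Corollary \ref{cor:pf} for the regularity of ideals and Proposition \ref{prop:reg,rn} together with \cite[Lemma 2.7]{Miller:2020} for the right noetherian conclusion.
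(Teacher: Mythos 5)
Your proposal is correct and follows essentially the same route as the paper: one direction is the recorded fact that regular semigroups are semisimple, and the converse together with the final clause is exactly Corollary \ref{cor:pf} applied to $I=S$ and to an arbitrary ideal $I$ (whose hypothesis holds because the principal factors of $\jay$-classes inside an ideal of a semisimple semigroup are simple or $0$-simple). One small quibble: your parenthetical justification that completely $0$-simple semigroups are regular because ``nonzero elements all lie in maximal subgroups'' is not accurate --- an element $(i,g,j)$ with $p_{ji}=0$ lies in no subgroup --- but completely $(0$-$)$simple semigroups are regular nonetheless, and this is already what the proof of Corollary \ref{cor:pf} uses, so nothing breaks.
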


\begin{rem}
Ideals, indeed kernels, of strongly right noetherian (regular) semigroups need not be strongly right noetherian; see \cite[Example 6.5 and Proposition 6.6]{Miller:2020}.
\end{rem}

We end this subsection with some results that will be useful in the next subsection.

\begin{lemma}
\label{lem:rq,wrn}
Let $S$ be a semigroup and let $I$ be an ideal of $S.$  Then $S/I$ is right noetherian if and only if the $S$-act $S_S/I_S$ is noetherian.
\end{lemma}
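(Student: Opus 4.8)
The plan is to establish, exactly as in the proof of Lemma~\ref{lem:rq} (of which this statement is the ``noetherian'' counterpart), that the poset of subacts of the $S$-act $S_S/I_S$ coincides with the poset of right ideals of the semigroup $S/I$. Once that identification is in place the equivalence is immediate: by definition $S_S/I_S$ is noetherian precisely when its poset of subacts satisfies the ascending chain condition, and $S/I$ is right noetherian precisely when its poset of right ideals satisfies the ascending chain condition.

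The key point is that, since $I$ is an ideal of $S$, acting by an element of $I$ collapses everything to $0$ in $S_S/I_S$: for $a\in S\!\setminus\!I$ and $s\in I$ we have $as\in I$, hence $a\cdot s=0$, mirroring the fact that $0$ absorbs multiplicatively in $S/I$. Concretely, I would first record that for $a,b\in S\!\setminus\!I$ the product $a\cdot b$ computed in $S/I$ agrees with the product $a\cdot b$ computed in the $S$-act $S_S/I_S$ (both equal $ab$ if $ab\notin I$ and $0$ otherwise), and that $a\cdot 0=0\cdot s=0$ on both sides. From this I would verify the two inclusions. If $B$ is a non-empty subact of $S_S/I_S$, then $0\in B$ (pick any $s\in I$ and any $b\in B$; then $b\cdot s=0\in B$), and a short case split --- according to whether the acting element lies in $I$, and whether the element of $B$ is $0$ --- shows $B\cdot(S/I)\subseteq B$, so $B$ is a right ideal of $S/I$. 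Conversely, any right ideal $J$ of $S/I$ is a subset of $(S\!\setminus\!I)\cup\{0\}$ with $0\in J$ when $J\neq\emptyset$, and the same case analysis shows $J\cdot S\subseteq J$ in the act, so $J$ is a subact. The empty set is simultaneously the empty subact and the empty right ideal, so the two collections of subsets of $(S\!\setminus\!I)\cup\{0\}$ are literally equal as posets under inclusion.

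With the posets identified, the ascending chain condition holds for one if and only if it holds for the other, which is exactly the assertion that $S/I$ is right noetherian if and only if $S_S/I_S$ is noetherian. I do not expect any genuine obstacle here; the only thing to be careful about is the bookkeeping around the zero element and the fact that $S_S/I_S$ is an act over $S$ rather than over $S/I$ --- but elements of $I$ act as the zero map, so this introduces no discrepancy with the multiplication in $S/I$.
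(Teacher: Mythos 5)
Your proposal is correct and follows the same route as the paper: the paper's (one-line) proof rests on precisely the identification you spell out, namely that the subsets of $(S\!\setminus\!I)\cup\{0\}$ that are right ideals of $S/I$ are exactly the subacts of $S_S/I_S$, after which the ascending chain conditions coincide. You simply make explicit the verification that the paper leaves to the reader.
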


\begin{proof}
It can easily seen that a subset of $S/I$ is a right ideal of $S/I$ if and only if it is a subact of $S_S/I_S$, and that finite generation is preserved in both directions.
\end{proof}

\begin{cor}
\label{cor:idealext,wrn}
Let $S$ be a semigroup and let $I$ be an ideal of $S.$  Then $S$ is right noetherian if and only if $S/I$ is right noetherian and (the $S$-act) $I_S$ is noetherian.
\end{cor}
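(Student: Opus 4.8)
The plan is to split into the two implications and, in each case, reduce everything to facts already available: Lemma \ref{lem:rq,wrn}, which identifies ``$S/I$ right noetherian'' with ``$S_S/I_S$ noetherian'' as $S$-acts; Corollary \ref{cor:idealext} (or rather its right-noetherian analogue via Proposition \ref{prop:idealext,wrn}); and most importantly Proposition \ref{prop:acts} applied to the $S$-act $A = S_S$ together with its subact $B = I_S$. Note first that the $S$-act $S_S$ is noetherian precisely when $S$ is right noetherian, by the remark following the definition of noetherian acts.

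For the forward implication, suppose $S$ is right noetherian, so $S_S$ is a noetherian $S$-act. Applying Proposition \ref{prop:acts} to the subact $I_S$ of $S_S$, we conclude that both $I_S$ and $S_S/I_S$ are noetherian. That $I_S$ is noetherian is exactly the statement we want, and that $S_S/I_S$ is noetherian gives, via Lemma \ref{lem:rq,wrn}, that $S/I$ is right noetherian.

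For the converse, suppose $S/I$ is right noetherian and $I_S$ is noetherian. By Lemma \ref{lem:rq,wrn}, $S_S/I_S$ is a noetherian $S$-act. Now apply the ``if'' direction of Proposition \ref{prop:acts} with $A = S_S$ and $B = I_S$: since both $B = I_S$ and $A/B = S_S/I_S$ are noetherian, so is $A = S_S$, i.e.\ $S$ is right noetherian. This completes the argument. I do not anticipate any real obstacle here: the corollary is essentially the translation of Proposition \ref{prop:acts} through Lemma \ref{lem:rq,wrn}, with the only point requiring a moment's care being the bookkeeping between the semigroup Rees quotient $S/I$ and the $S$-act Rees quotient $S_S/I_S$ (which have the same underlying set), and that identification has already been handled in Lemma \ref{lem:rq,wrn}.
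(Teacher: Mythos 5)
Your proof is correct and follows essentially the same route as the paper's: both reduce the statement to Proposition \ref{prop:acts} applied to the subact $I_S$ of $S_S$, together with Lemma \ref{lem:rq,wrn} to translate between the semigroup Rees quotient $S/I$ and the act Rees quotient $S_S/I_S$. The only cosmetic difference is that the paper gets the forward implication for $S/I$ from Lemma \ref{lem:quotient} and invokes Proposition \ref{prop:idealext,wrn} for the converse, whereas you run both directions uniformly through Proposition \ref{prop:acts}; both work.
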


\begin{proof}
If $S$ is right noetherian, then so is $S/I$ by Lemma \ref{lem:quotient}.  Since $S_S$ is noetherian, the subact $I_S$ is noetherian by Proposition \ref{prop:acts}.  The converse follows from Lemma \ref{lem:rq,wrn} and Proposition \ref{prop:idealext,wrn}.
\end{proof}

Recalling Construction \ref{con}, an argument similar to the proof of Proposition \ref{prop:con,ACCPR} yields:

\begin{prop}
\label{prop:con,wrn}
Let $S$ be a semigroup, let $A$ be an $S$-act, and let $U=\mathcal{U}(S, A).$  Then $U$ is right noetherian if and only if $S$ is right noetherian and $A$ is noetherian.
\end{prop}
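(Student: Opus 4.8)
The plan is to follow the strategy of the proof of Proposition~\ref{prop:con,ACCPR} verbatim, replacing ``ACCPR/ACCP'' by ``right noetherian/noetherian'' throughout, and replacing the appeal to Corollary~\ref{cor:idealext} by Corollary~\ref{cor:idealext,wrn} and the appeal to Proposition~\ref{prop:maxcondition} by Theorem~\ref{thm:principal}. So I would set $I=\{x_a : a\in A\}\cup\{0\}$, which by Construction~\ref{con} is a null ideal of $U$, and record that $U/I\cong S^0$. By Corollary~\ref{cor:idealext,wrn}, $U$ is right noetherian if and only if $U/I$ is right noetherian and the $U$-act $I_U$ is noetherian.

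First I would dispose of the quotient: $U/I\cong S^0$ is right noetherian if and only if $S$ is right noetherian, since adjoining a zero to a semigroup merely adjoins a least element to its poset of right ideals and preserves finite generation of right ideals. This is the exact analogue of the corresponding ``clearly'' in the proof of Proposition~\ref{prop:con,ACCPR} and is equally routine.

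Next I would analyse $I_U$. As already observed in the proof of Proposition~\ref{prop:con,ACCPR}, for $a,b\in A$ one has $0\in x_aU^1$, and $x_a\in x_bU^1$ precisely when $a\in bS^1$; hence the poset of $\ar_U$-classes of $I_U$ is order-isomorphic to $P\cup\{\bot\}$, where $P$ is order-isomorphic to the poset of $\ar_S$-classes of $A$ and $\bot$ (the class of $0$) is a least element. Adjoining a least element to a poset affects neither the ascending chain condition nor the absence of infinite antichains, since a least element is comparable to every element and hence belongs to no antichain of size at least $2$. Therefore, by Theorem~\ref{thm:principal}, $I_U$ is noetherian if and only if $A$ is noetherian. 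Combining this with the previous paragraph and the displayed equivalence from Corollary~\ref{cor:idealext,wrn} yields the result.

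I do not expect a genuine obstacle here; the only point needing a little care is the bookkeeping in the third paragraph, namely checking that the order-isomorphism $R_{x_a}\mapsto R_a$ together with the extra bottom element really does transport ``ACC plus no infinite antichain'' in both directions. Alternatively one could sidestep the $\ar_U$-class description entirely and argue directly that the subacts of $I_U$ are exactly the sets $\{x_a : a\in B\}\cup\{0\}$ with $B$ a subact of $A$, together with $\{0\}$ itself; then the poset of subacts of $I_U$ is the poset of subacts of $A$ with a bottom element adjoined, and noetherianity of $I_U$ follows from that of $A$ immediately from the definition.
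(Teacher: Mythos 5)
Your argument is correct and is exactly what the paper intends: the paper gives no separate proof, stating only that ``an argument similar to the proof of Proposition~\ref{prop:con,ACCPR} yields'' the result, and your proposal carries out precisely that adaptation (via Corollary~\ref{cor:idealext,wrn} and Theorem~\ref{thm:principal}). The direct description of the subacts of $I_U$ in your final paragraph is a clean way to close the bookkeeping and is consistent with Construction~\ref{con}.
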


\subsection{Minimal and 0-minimal ideals}

From now on we focus on minimal and 0-minimal ideals.  We begin by exhibiting an example of a right noetherian semigroup with a kernel that is not right noetherian.

\begin{ex}
\label{non-wrn min ideal}
Let $S$ be the semigroup defined by the presentation 
$$\langle a, b\,|\,ab^2=b, aba=a^2b\rangle.$$
Corresponding to the above presentation, we have a rewriting system on $\{a, b\}$ consisting of the rules $ab^2\to b$ and $aba\to a^2b.$
It is straightforward to check that this rewriting is complete (i.e.\ noetherian and confluent) and hence yields the following set of normal forms for $S$:
$$\{a^i, b^ia^j, b^ja^ib : i>0, j\geq 0\};$$
that is, the set of all the words over $\{a, b\}$ that do not contain $ab^2$ or $aba$ as a subword.  For more information about rewriting systems, one may consult \cite{Book:1993} for instance.

Let $A=\langle a\rangle\cong\mathbb{N}.$  We have that
$$a^i(b^ia^j)b^{j+1}=a^ib^{i+1}=b,$$
and hence $a^i(b^ia^jb)b^j=b.$  Thus $S\!\setminus\!A$ is the $\jay$-class of $b.$  Since $S\!\setminus\!A$ is an ideal of $S,$ we conclude that it is the kernel $\K=\K(S).$\par
\vspace{0.5em}
(1) {\em $S$ is right noetherian.}\par
Since $S/\K\cong\mathbb{N}\cup\{0\}$ is right noetherian, by Corollary \ref{cor:idealext,wrn} it suffices to prove that $\K_S$ is noetherian.  So, let $I_S\subseteq\K_S$ be a subact of $S_S.$  We shall prove that $I_S$ is finitely generated.  
Let $i_0\in\mathbb{N}$ be minimal such that $b^{i_0}\in I_S.$  If there exist $j\in\mathbb{N}$ such that $b^{i_0-1}a^j\in I_S,$ let $j_0$ be the minimal such $j$ and set $Y=\{b^{i_0-1}a^{j_0}\}$; otherwise, let $Y=\emptyset.$  If there exist $k\in\mathbb{N}$ such that $b^{i_0-1}a^kb\in I_S,$ let $k_0$ be the minimal such $k$ and set $Z=\{b^{i_0-1}a^{k_0}b\}$; otherwise, let $Z=\emptyset.$
We claim that $I_S$ is generated by $\{b^{i_0}\}\cup Y\cup Z.$  So, let $s\in I_S.$  There are two cases to consider.\par
\textit{Case 1}: $s=b^ia^j$ for some $i>0$ and $j\geq 0.$\par
If $i\geq i_0,$ then $s=b^{i_0}b^{i-i_0}a^j\in b^{i_0}S^1.$
Suppose then that $i<i_0.$  Now $b^{i+1}=sb^{j+1},$ so $i+1\geq i_0$ and hence $i=i_0-1.$  It follows that $j\geq j_0,$ and hence $s=b^{i_0-1}a^{j_0}a^{j-j_0}\in YS^1.$\par
\textit{Case 2}: $s=b^ia^jb$ for some $i\geq 0$ and $j>0.$\par
If $i\geq i_0,$ then $s\in b^{i_0}S,$ so assume that $i<i_0.$  We have that $b^{i+1}=sb^j\in I_S,$ so $i=i_0-1$ and $j\geq k_0.$  Thus $s=b^{i_0-1}a^{k_0}ba^{j-k_0}\in ZS^1.$\par
\vspace{0.5em}
(2) {\em $\K$ is not right noetherian.}\par
We claim that the infinite set $\{(ba^i)\K^1 : i\geq 0\}$ is an antichain of principal right ideals of $\K,$ and hence $\K$ is not right noetherian by Corollary \ref{cor:principal}.  Indeed, consider $ba^iu$ where $u\in\K.$\par
Suppose first that $u=b^ma^n$ for some $m\geq 1$ and $n\geq 0.$  
If $i<m,$ then $ba^iu=b^{m-i+1}a^n$ and $m-i+1\geq 2.$
If $i\geq m,$ then $ba^iu=ba^{i-m+n+1}b.$\par
Now suppose that $u=b^ma^nb$ for some $m\geq 0$ and $n\geq 1.$
If $m=0,$ then $ba^iu=ba^{i+n}b.$  If $1\leq m\leq i,$ then
$$ba^iu=ba^{i-m+1}ba^nb=ba^{i-m+1+n}b^2=ba^{i-m+n}b.$$
Finally, if $m>i$ then $ba^iu=b^{m-i+1}a^nb.$\par
In any case, in view of the normal form for $\K,$ we conclude that $ba^iu\neq ba^j$ for any $j\in\mathbb{N}.$  It follows that $(ba^i)\K^1$ and $(ba^j)\K^1$ are incomparable whenever $i\neq j.$
\end{ex}

The next two results show that in a right noetherian semigroup with minimal one-sided ideals, the kernel is also right noetherian.

\begin{prop}
\label{prop:minrightideal}
Let $S$ be a semigroup with at least one minimal right ideal, and let $\K=\K(S).$  If $S$ is right noetherian, then $\K$ has finitely many $\ar$-classes (of itself), and hence $\K$ is right noetherian.
\end{prop}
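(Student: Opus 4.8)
The goal is to show that the kernel $\K$ of a right noetherian semigroup $S$ with a minimal right ideal has only finitely many $\ar$-classes; the final claim (that $\K$ is then right noetherian) follows at once from Corollary \ref{cor:R-classes}. The key structural fact I would invoke is that, since $S$ has a minimal right ideal, $\K$ exists and is the disjoint union of the minimal right ideals of $S$ \cite[Theorem 2.1]{Clifford:1948}, and moreover $S$ has a completely simple kernel (as $S$, being right noetherian, certainly satisfies ACCPR, so Theorem \ref{thm:min_left_ideal}-type reasoning is not even needed here — but in fact a completely simple semigroup is precisely one that is a disjoint union of minimal right ideals, each of which is a single $\ar$-class? no). Let me be careful: in a completely simple semigroup, the $\ar$-classes are exactly the minimal right ideals, so "$\K$ has finitely many $\ar$-classes" is equivalent to "$S$ has finitely many minimal right ideals". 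So the real content is: a right noetherian semigroup has only finitely many minimal right ideals.

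First I would recall that each minimal right ideal $R$ of $S$ is a principal right ideal of $S$: indeed for any $a \in R$, $aS^1 \subseteq R$ is a right ideal, so $aS^1 = R$ by minimality. Hence the distinct minimal right ideals $\{R_i : i \in I\}$ form a family of principal right ideals of $S$, and they are pairwise incomparable under inclusion (distinct minimal right ideals are disjoint), i.e.\ they form an antichain of principal right ideals. Since $S$ is right noetherian, Corollary \ref{cor:principal}(4) tells us $S$ contains no infinite antichain of principal right ideals, so $I$ is finite. This establishes that $S$ has finitely many minimal right ideals; since $\K$ is their union and each is a single $\ar$-class of $\K$ (a minimal right ideal, viewed as a semigroup, is right simple, hence consists of a single $\ar$-class), $\K$ has finitely many $\ar$-classes. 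Then $\K$ is right noetherian by Corollary \ref{cor:R-classes}.

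I should double-check the claim that each minimal right ideal $R$ is a single $\ar$-class of $\K$: for $a, b \in R$, we have $a\K \subseteq R$ and $a\K$ is a right ideal of $S$ contained in $R$ (this is exactly the local-right-identity argument from Proposition \ref{prop:min_right_ideal}), so $a\K = R$, giving $a \in a\K \subseteq a\K^1 = bS^1 \cap \K$... more directly: $aS^1 = R = bS^1$ so $a \,\ar\, b$ in $S$, and since $\ar$ in $S$ restricted to $\K$ refines $\ar$ in $\K$ the other way — actually $a\K^1 \supseteq a\K = R \ni b$, and symmetrically, so $a\K^1 = b\K^1 = R$, i.e.\ $a$ and $b$ are $\ar$-related in $\K$. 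So each $R_i$ is one $\ar$-class of $\K$, and there are finitely many of them.

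**Expected obstacle.** There is essentially no hard analytic step here; the only thing requiring care is the bookkeeping around the two different notions of $\ar$ (in $S$ versus in $\K$) and making sure "minimal right ideal of $S$ = $\ar_{\K}$-class" is justified cleanly — this is where I would spend a sentence or two rather than gloss over it. Everything else is a direct appeal to Corollary \ref{cor:principal} (no infinite antichain of principal right ideals) and Corollary \ref{cor:R-classes} (finitely many $\ar$-classes implies right noetherian).
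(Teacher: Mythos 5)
Your proposal is correct and follows essentially the same route as the paper: the minimal right ideals are principal right ideals of $S$ forming an antichain, so Corollary \ref{cor:principal} forces there to be finitely many of them, and $\K$ is their union. The only (cosmetic) difference is that the paper closes by citing Corollary \ref{cor:minrightideals} directly, whereas you explicitly verify that each minimal right ideal is a single $\ar$-class of $\K$ and invoke Corollary \ref{cor:R-classes} --- a detail the paper leaves implicit, and which you justify correctly via $a\K=R$.
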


\begin{proof}
The kernel $\K$ is the union of all the minimal right ideals of $S.$  By \cite[Theorem 2.4]{Clifford:1948}, each of these minimal right ideals is a minimal right ideal of $\K.$  Moreover, due to their minimality, they form an antichain of principal right ideals of $S.$  Hence, by Corollary \ref{cor:principal}, $\K$ is the union of finitely many minimal right ideals.  Hence, by Corollary \ref{cor:minrightideals}, $\K$ is right noetherian.
\end{proof}

\begin{prop}
\label{prop:minleftideal}
Let $S$ be a semigroup with at least one minimal left ideal, and let $\K=\K(S).$  If $S$ is right noetherian, then $\K$ is completely simple and right noetherian (and hence has finitely many $\ar$-classes).
\end{prop}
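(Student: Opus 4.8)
The plan is to obtain the conclusion by chaining together results already established, with Theorem \ref{thm:min_left_ideal} doing the main work. First I would observe that a right noetherian semigroup satisfies ACCPR (this is part of Corollary \ref{cor:principal}). Since $S$ satisfies ACCPR and has at least one minimal left ideal, Theorem \ref{thm:min_left_ideal} applies and tells us immediately that the kernel $\K=\K(S)$ is completely simple. This already gives half of the assertion.

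It remains to show $\K$ is right noetherian (and has finitely many $\ar$-classes). Here I would invoke the fact recorded in Section \ref{sec:prelim} (\cite[Theorem 3.2]{Clifford:1948}) that a semigroup possesses both minimal right ideals and minimal left ideals if and only if its kernel is completely simple. Applying this to $S$ — whose kernel we have just shown to be completely simple — we conclude that $S$ has at least one minimal right ideal. Now $S$ is right noetherian with a minimal right ideal, so Proposition \ref{prop:minrightideal} applies verbatim: $\K$ has finitely many $\ar$-classes of itself and is therefore right noetherian. Combining the two halves completes the proof.

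I do not expect a genuine obstacle, since every ingredient is in hand; the only "content" is recognising that completely simplicity of $\K$ is exactly the bridge from "has minimal left ideals" to "has minimal right ideals". If one wanted a route that avoids citing \cite[Theorem 3.2]{Clifford:1948} directly, one could instead argue by hand that each minimal right ideal $R$ of the completely simple semigroup $\K$ contains an idempotent $e$, so that $R=e\K$; then $eS\subseteq\K$ (as $\K$ is an ideal), and for any $s\in S$ one has $es=e(es)\in e\K$, giving $eS=e\K=R$, so that $R$ is in fact a minimal right ideal of $S$. Either way one arrives at "$S$ has a minimal right ideal" and finishes via Proposition \ref{prop:minrightideal} (equivalently, via the antichain of minimal right ideals together with Corollary \ref{cor:principal} and Corollary \ref{cor:minrightideals}).
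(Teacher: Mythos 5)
Your proposal is correct. The first half coincides exactly with the paper's argument: right noetherian implies ACCPR, and Theorem \ref{thm:min_left_ideal} then yields that $\K$ is completely simple. For the second half you diverge. The paper simply observes that a completely simple semigroup is regular, so $\K$ is a regular subsemigroup of $S$ and Proposition \ref{prop:reg,rn} gives that $\K$ is right noetherian; finitely many $\ar$-classes then follows from Corollary \ref{cor:minrightideals}. You instead use the classical fact (quoted in the preliminaries from Clifford) that a completely simple kernel forces $S$ to have minimal right ideals, and then reapply Proposition \ref{prop:minrightideal}, obtaining ``finitely many $\ar$-classes'' first and deducing right noetherianity from it --- the reverse order of implication. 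Both routes are valid. The paper's is shorter and leans on the regular-subsemigroup result imported from \cite{Miller:2021}; yours is self-contained relative to the results proved earlier in this paper (plus the already-cited Clifford theorem), and it makes the left/right symmetry with Proposition \ref{prop:minrightideal} explicit. Your optional by-hand verification that each minimal right ideal $R=e\K$ of $\K$ satisfies $eS=e\K$ and is therefore a minimal right ideal of $S$ is also sound, modulo the one-line observation that a right ideal of $S$ contained in $R$ is a right ideal of $\K$.
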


\begin{proof}
Since $S$ satisfies ACCPR, the kernel $\K$ is completely simple by Theorem \ref{thm:min_left_ideal}.  Since $\K$ is a regular subsemigroup of $S,$ it is right noetherian by Proposition \ref{prop:reg,rn}, and hence $\K$ has finitely many $\ar$-classes by Corollary \ref{cor:minrightideals}.
\end{proof}

\begin{cor}
Let $S$ be a semigroup with a minimal one-sided ideal, and let $\K=\K(S).$  Then $S$ is right noetherian if and only if both $\K$ and $S/\K$ are right noetherian.
\end{cor}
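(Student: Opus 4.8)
The plan is to deduce this directly from the results already in hand, via a short case split on whether the minimal one-sided ideal is a right ideal or a left ideal. Note first that in either case the kernel $\K=\K(S)$ exists, by the cited results of Clifford (a semigroup with a minimal right, resp.\ left, ideal has a kernel). So the statement makes sense.

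\textbf{Forward direction.} Suppose $S$ is right noetherian. Then $S/\K$ is right noetherian by Lemma \ref{lem:quotient}, since $S/\K$ is a (Rees, hence ordinary) quotient of $S$. It remains to see that $\K$ is right noetherian. If $S$ has a minimal right ideal, this is Proposition \ref{prop:minrightideal}. If instead $S$ has a minimal left ideal, this is Proposition \ref{prop:minleftideal}. Either way $\K$ is right noetherian, completing this direction.

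\textbf{Converse.} Suppose both $\K$ and $S/\K$ are right noetherian. Since $\K$ is an ideal of $S$, Proposition \ref{prop:idealext,wrn} applies directly and gives that $S$ is right noetherian.

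\textbf{Main obstacle.} There is essentially no obstacle here: the corollary is a packaging of Lemma \ref{lem:quotient}, Propositions \ref{prop:minrightideal} and \ref{prop:minleftideal}, and Proposition \ref{prop:idealext,wrn}. The only point requiring a word of care is the case distinction in the forward direction (minimal right ideal versus minimal left ideal), since the two cases invoke different propositions; and one should remark, using the preliminaries, that the hypothesis of a minimal one-sided ideal already guarantees the existence of $\K$ so that the statement is non-vacuous.
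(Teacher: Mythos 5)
Your proposal is correct and follows exactly the paper's own argument: the forward direction via Lemma \ref{lem:quotient} together with Propositions \ref{prop:minrightideal} and \ref{prop:minleftideal} (split on whether the minimal one-sided ideal is right or left), and the converse via Proposition \ref{prop:idealext,wrn}. Your additional remark that the hypothesis guarantees the existence of $\K$ is a sensible clarification but does not change the substance.
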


\begin{proof}
The forward direction follows from Lemma \ref{lem:quotient} and Propositions \ref{prop:minrightideal} and \ref{prop:minleftideal}, and the reverse implication follows from Proposition \ref{prop:idealext,wrn}.
\end{proof}

The remainder of this section concerns semigroups with zero.  The following example demonstrates that a right noetherian semigroup can have a right/left socle that is not right noetherian.  

\begin{ex}
(1) Let $S$ be any right noetherian semigroup, let $A$ be a noetherian $S$-act (such as $S_S$), and let $U=\mathcal{U}(S, A).$  Then $U$ is right noetherian by Proposition \ref{prop:con,wrn}.  For each $a\in A,$ the set $\{x_a, 0\}$ is a null 0-minimal left ideal of $U,$ and $\Sigma^l(U)=\{x_a : a\in A\}\cup\{0\}.$  If $A$ is infinite, then $\Sigma^l(U)$ is not right noetherian; indeed, any infinite null semigroup is not right noetherian by Corollary \ref{cor:minrightideals}.

(2) Let $S$ be the free commutative semigroup on two generators $y$ and $z.$  Let $A=\{a_i : i\in\mathbb{Z}\}$ be the $S$-act with action given by $a_i\cdot y^jz^k=a_{i+j-k}.$  It is easy to see that $A$ has no proper subacts, and hence $A$ is noetherian.  Since $S$ is right noetherian, we have that $U=\mathcal{U}(S, A)$ is right noetherian by Proposition \ref{prop:con,wrn}.  We have that $R=\{x_a : a\in A\}\cup\{0\}$ is a null 0-minimal right ideal of $U,$ and $\Sigma^r(U)=R$ is not right noetherian.
\end{ex}

The following result provides a necessary and sufficient condition for a 0-minimal right ideal to be right noetherian.

\begin{thm}
\label{thm:0-minrightideal}
Let $R$ be a 0-minimal right ideal of a semigroup $S.$  Then $R$ is right noetherian if and only if the set $\{a\in R\!\setminus\!\!\{0\} : aR=0\}$ is finite.
\end{thm}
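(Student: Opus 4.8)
The plan is to translate the right-ideal structure of $R$, viewed as a semigroup with zero, into a count of Green's $\ar$-classes of $R$, and then to apply the foundational results of Section~\ref{sec:prelim}. Throughout, write $Z=\{a\in R\setminus\{0\}:aR=0\}$.

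For the forward implication I would argue contrapositively: assuming $Z$ is infinite, I would exhibit an infinite antichain of principal right ideals of $R$. For $a\in Z$ one has $aR^1=aR\cup\{a\}=\{a,0\}$, and for distinct $a,b\in Z$ the principal right ideals $\{a,0\}$ and $\{b,0\}$ are incomparable, since neither $a$ nor $b$ equals $0$. By Corollary~\ref{cor:principal}, $R$ is then not right noetherian.

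For the converse, suppose $Z$ is finite. The crucial observation is the dichotomy coming from $0$-minimality of $R$ in $S$: for each nonzero $a\in R$, the set $aR$ satisfies $aRS\subseteq aR$ and $aR\subseteq RR\subseteq R$, so it is a right ideal of $S$ contained in $R$, whence $aR=0$ or $aR=R$. Thus $R$ is the disjoint union of $\{0\}$, $Z$, and $R''=\{a\in R\setminus\{0\}:aR=R\}$. For every $a\in R''$ we have $aR^1=aR\cup\{a\}=R\cup\{a\}=R$, so all of $R''$ lies in a single $\ar$-class of $R$; hence $R$ has at most $|Z|+2$ $\ar$-classes in total, and is therefore right noetherian by Corollary~\ref{cor:R-classes}.

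I do not anticipate a genuine obstacle here: once the dichotomy $aR\in\{0,R\}$ is available it simultaneously produces the antichain in one direction and caps the number of $\ar$-classes in the other. The only care needed is to distinguish right ideals of the ambient semigroup $S$ (to which $0$-minimality of $R$ applies) from right ideals of $R$ itself, and to note that the degenerate cases — $R=\{0\}$, $|R|=2$, or $R$ null — are absorbed by the same argument.
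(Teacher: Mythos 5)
Your proof is correct and follows essentially the same route as the paper's: both rest on the dichotomy $aR\in\{0,R\}$ forced by the $0$-minimality of $R$ in $S$, and on the resulting description of the principal right ideals of $R$ as $\{0\}$, the sets $\{a,0\}$ for $a$ with $aR=0$, and $R$ itself. The only cosmetic difference is that for the converse you count $\ar$-classes and invoke Corollary~\ref{cor:R-classes}, whereas the paper first observes that $R$ always satisfies ACCPR and then applies the antichain criterion of Corollary~\ref{cor:principal}.
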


\begin{proof}
For each $a\in R,$ we have either $aR=0$ or $aR=R.$  Thus, if $a, b\in R\!\setminus\!\!\{0\}$ with $a\subseteq bR,$ then $bR=R.$  It follows that $R$ satisfies ACCPR.  Thus, by Corollary \ref{cor:principal}, $R$ is right noetherian if and only if it has no infinite antichain of principal right ideals.  Now, for any $a, b\in R\!\setminus\!\!\{0\}$ with $a\neq b,$ the principal right ideals $aR^1$ and $bR^1$ are incomparable if and only if $b\notin aR$ and $a\notin bR$ if and only if $aR\neq R$ and $bR\neq R$ if and only if $aR=bR=0.$  The result now follows.
\end{proof} 

Completely 0-simple semigroups have the following well-known representation, due to Rees.  Let $G$ be a group, let $I$ and $J$ be non-empty sets, and let $P=(p_{ji})$ be a $J\times I$ matrix over $G^0$ in which every row and column contains at least one element of $G.$  The {\em Rees matrix semigroup with zero over $G$ with respect to $P$} is the semigroup $\mathcal{M}^0(G; I, J; P)$ with universe $(I\times G\times J)\cup\{0\}$ and multiplication given by
$$(i, g, j)(k, h, l)=
\begin{cases}
(i, gp_{jk}h, l)&\text{ if }p_{jk}\in G\\
0&\text{ otherwise,}
\end{cases}
\hspace{1em}0(i, g, j)=(i, g, j)0=0^2=0.$$
The 0-minimal right ideals of $\mathcal{M}^0(G; I, J; P)$ are the sets $R_i=(\{i\}\times G\times J)\cup\{0\}$ ($i\in I$).  From Theorem \ref{thm:0-minrightideal} we deduce:

\begin{cor}
Let $S=\mathcal{M}^0(G; I, J; P)$ be a completely 0-simple semigroup.  Then a 0-minimal right ideal $R_i=(\{i\}\times G\times J)\cup\{0\}$ of $S$ is right noetherian if and only if the set $\{(g, j)\in G\times J : p_{ji}=0\}$ is finite.
\end{cor}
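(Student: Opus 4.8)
The plan is to apply Theorem \ref{thm:0-minrightideal} directly: since $R_i$ is a $0$-minimal right ideal of $S,$ it is right noetherian if and only if the set $\{a\in R_i\!\setminus\!\{0\} : aR_i=0\}$ is finite. So the task reduces to identifying, among the non-zero elements $(i, g, j)$ of $R_i,$ exactly those that annihilate $R_i$ on the left, and checking that this set is in bijection with $\{(g, j)\in G\times J : p_{ji}=0\}.$

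First I would compute products within $R_i.$ For $(i, g, j), (i, h, l)\in R_i,$ the Rees matrix multiplication gives
$$(i, g, j)(i, h, l)=\begin{cases}(i, gp_{ji}h, l)&\text{if }p_{ji}\in G,\\ 0&\text{if }p_{ji}=0,\end{cases}$$
and of course $(i, g, j)\cdot 0=0.$ Hence if $p_{ji}=0,$ then $(i, g, j)R_i=\{0\}.$ Conversely, if $p_{ji}\in G,$ then as $h$ ranges over $G$ and $l$ over $J$ the element $gp_{ji}h$ ranges over all of $G$ (left multiplication by the fixed group element $gp_{ji}$ is a bijection of $G$), so $(i, g, j)R_i=(\{i\}\times G\times J)\cup\{0\}=R_i\neq 0.$ Therefore, for $(i, g, j)\in R_i\!\setminus\!\{0\}$ we have $(i, g, j)R_i=0$ if and only if $p_{ji}=0.$

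It follows that $\{a\in R_i\!\setminus\!\{0\} : aR_i=0\}=\{(i, g, j) : g\in G,\ j\in J,\ p_{ji}=0\},$ which is in obvious bijection (via $(i, g, j)\mapsto(g, j)$) with $\{(g, j)\in G\times J : p_{ji}=0\}.$ In particular, one set is finite precisely when the other is. Combining this with Theorem \ref{thm:0-minrightideal} yields the claim. I do not anticipate any real obstacle here; the only point requiring a moment's care is the observation that $p_{ji}\in G$ forces $(i, g, j)R_i=R_i,$ which uses that multiplication by a group element permutes $G,$ and, implicitly, that $R_i$ is genuinely $0$-minimal in $S$ (guaranteed by the standing hypothesis that $S$ is completely $0$-simple) so that Theorem \ref{thm:0-minrightideal} applies.
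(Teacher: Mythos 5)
Your proposal is correct and is essentially the paper's own deduction: the corollary is stated as an immediate consequence of Theorem \ref{thm:0-minrightideal}, obtained exactly by the computation you give, namely that for $(i,g,j)\in R_i\!\setminus\!\{0\}$ one has $(i,g,j)R_i=0$ if and only if $p_{ji}=0$, so the annihilating set is in bijection with $\{(g,j)\in G\times J : p_{ji}=0\}.$ Nothing further is needed.
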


\begin{cor}
\label{cor:comp_0-simple}
Let $S=\mathcal{M}^0(G; I, J; P)$ be a completely 0-simple semigroup where $G$ is infinite.  Then a 0-minimal right ideal $R_i=(\{i\}\times G\times J)\cup\{0\}$ of $S$ is right noetherian if and only if $p_{ji}\in G$ for all $j\in J.$
\end{cor}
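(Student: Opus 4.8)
The plan is to deduce this immediately from the preceding Corollary, which states that $R_i=(\{i\}\times G\times J)\cup\{0\}$ is right noetherian if and only if the set $\{(g,j)\in G\times J : p_{ji}=0\}$ is finite. Since $G$ is assumed infinite, I would argue that this set is finite precisely when it is empty, i.e.\ precisely when there is no $j\in J$ with $p_{ji}=0$, which is the asserted condition.

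More explicitly, first I would observe that if there exists some $j_0\in J$ with $p_{j_0i}=0$, then the entire set $\{(g,j_0) : g\in G\}$ is contained in $\{(g,j)\in G\times J : p_{ji}=0\}$; as $G$ is infinite this is an infinite set, so by the previous corollary $R_i$ is not right noetherian. Conversely, if $p_{ji}\in G$ for every $j\in J$, then the set $\{(g,j)\in G\times J : p_{ji}=0\}$ is empty, hence finite, and so $R_i$ is right noetherian by the previous corollary. This establishes the equivalence.

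I do not anticipate any real obstacle here: the statement is a straightforward specialisation of the preceding corollary using the hypothesis that $G$ is infinite. The only mild subtlety worth stating cleanly is the observation that, because columns of $G^0$-valued entries indexed by a fixed $i$ range over all of $G$ in the first coordinate, a single zero entry $p_{ji}=0$ already forces the exceptional set to be infinite when $|G|=\infty$; this is what collapses the "finite" condition to the "empty" condition. Everything else is immediate.
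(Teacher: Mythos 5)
Your proposal is correct and matches the paper's (implicit) reasoning exactly: the corollary is stated as an immediate specialisation of the preceding one, using that a single zero entry $p_{j_0i}=0$ forces the exceptional set $\{(g,j)\in G\times J : p_{ji}=0\}$ to contain the infinite fibre $G\times\{j_0\}$, so finiteness collapses to emptiness. Nothing is missing.
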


\begin{rem}
Let $S=\mathcal{M}^0(\mathbb{Z}; I, I; P)$ where $|I|=2$ and
$P=\begin{pmatrix}
1 & 0\\
0 & 1 
\end{pmatrix}$.
Then $S$ is strongly right noetherian by \cite[Corollary 2.2]{Kozhukhov:2003}, but neither of its two 0-minimal right ideals are right noetherian by Corollary \ref{cor:comp_0-simple}.
\end{rem}

Although the right socle of a right noetherian semigroup need not be right noetherian itself, it is necessary that the globally idempotent part of the right socle be right noetherian.

\begin{prop}
\label{prop:0-min_right_ideal,wrn}
Let $S=S^0$ be a right noetherian semigroup.  Then $S$ has finitely many 0-minimal right ideals.  Moreover, if $S$ has a globally idempotent 0-minimal right ideal, then the globally idempotent part $B^r$ of $\Sigma^r=\Sigma^r(S)$ is a union of finitely many 0-minimal right ideals of itself, and hence $B^r$ is right noetherian. 
\end{prop}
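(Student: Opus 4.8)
The statement has two parts. For the first, that $S$ has finitely many 0-minimal right ideals, the plan is to observe that these form an antichain of principal right ideals and then apply right noetherianness. Every 0-minimal right ideal $R$ of $S$ is principal, since for any $a\in R\setminus\{0\}$ the right ideal $aS^1\subseteq R$ is nonzero and hence equals $R$ by 0-minimality. If $R,R'$ are distinct 0-minimal right ideals, then $R\cap R'$ is a right ideal of $S$ contained in each, so by 0-minimality $R\cap R'=\{0\}$; in particular $R\not\subseteq R'$ and $R'\not\subseteq R$. Thus the set of 0-minimal right ideals is an antichain of principal right ideals, which must be finite by Corollary \ref{cor:principal}. (One could also argue via the ideal $\Sigma^r$: the subact $\Sigma^r_S$ of $S_S$ is noetherian by Proposition \ref{prop:acts}, and being a union of 0-simple subacts it has only finitely many of them by Corollary \ref{cor:simplesubacts}.)

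For the second part, suppose $S$ has a globally idempotent 0-minimal right ideal. By the first part there are only finitely many such, say $R_1,\dots,R_n$ with $n\geq 1$, and $B^r=R_1\cup\dots\cup R_n$ by definition of $B^r$ (each $R_k$, being a nonempty right ideal of $S=S^0$, contains $0$). Each $R_k$ is a right ideal of the semigroup $B^r$ since $R_kB^r\subseteq R_kS\subseteq R_k$. The key point is that each $R_k$ is in fact a \emph{0-minimal} right ideal of $B^r$; granting this, $B^r$ is a union of finitely many 0-minimal right ideals of itself, and hence right noetherian by Corollary \ref{cor:minrightideals}.

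To establish 0-minimality of $R_k$ in $B^r$, I would use the structure theorems. Since $B^r\neq 0$, Theorem \ref{thm:rightsocle}(4) gives a family of globally idempotent 0-minimal right ideals $\{R_i:i\in I\}$ of $S$ with $B^r$ the 0-direct union of the 0-simple semigroups $B^{R_i}$; in particular $B^r=\bigcup_{i\in I}B^{R_i}$, with each $B^{R_i}\subseteq B^r$. Let $T$ be a right ideal of $B^r$ with $\{0\}\subsetneq T\subseteq R_k$, and choose $t\in T\setminus\{0\}$. Then $t\in B^{R_i}$ for some $i$, so $t$ lies in some globally idempotent 0-minimal right ideal $R'$ of $S$ with $R'\subseteq B^{R_i}$; but $R'=tS^1=R_k$ by 0-minimality in $S$ of $R'$ and of $R_k$, so $R_k\subseteq B^{R_i}$ and therefore $T\subseteq B^{R_i}$. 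Since $TB^{R_i}\subseteq TB^r\subseteq T$, $T$ is a right ideal of the semigroup $B^{R_i}$, hence a right ideal of $S$ by Theorem \ref{thm:SR}(4). As $R_k$ is 0-minimal in $S$ and $\{0\}\subsetneq T\subseteq R_k$, we conclude $T=R_k$, as required.

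The main obstacle is precisely this last step. Because globally idempotent 0-minimal right ideals of $S$ need not be right 0-simple, $R_k$ is not automatically 0-minimal in $B^r$, and a right ideal of $B^r$ lying inside $R_k$ need not, a priori, be a right ideal of $S$. The device that makes it work is to pass through the intermediate 0-simple semigroup $B^{R_i}$: the 0-direct decomposition of $B^r$ (Theorem \ref{thm:rightsocle}(4)) confines $R_k$, and hence $T$, to a single block $B^{R_i}$, and Theorem \ref{thm:SR}(4) then identifies the right ideals of $B^{R_i}$ with the right ideals of $S$ contained in it, so 0-minimality transfers back.
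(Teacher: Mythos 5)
Your proposal is correct and follows essentially the same route as the paper: the first part is the same antichain-of-principal-right-ideals argument via Corollary \ref{cor:principal}, and the second part uses the 0-direct decomposition of $B^r$ into the blocks $B^{R_i}$ from Theorem \ref{thm:rightsocle}(4) together with Theorem \ref{thm:SR}(4) to transfer (0-minimal) right ideals between $S$ and the blocks, concluding with Corollary \ref{cor:minrightideals}. You merely spell out in more detail the step the paper dismisses as ``clearly follows,'' namely that each globally idempotent 0-minimal right ideal of $S$ is 0-minimal as a right ideal of $B^r$ itself.
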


\begin{proof}
The set of 0-minimal right ideals of $S,$ if non-empty, is an antichain of principal right ideals of $S.$  Therefore, since $S$ is right noetherian, it has finitely many 0-minimal right ideals by Corollary \ref{cor:principal}.  

Now suppose that $S$ has a globally idempotent 0-minimal right ideal.  By Theorem \ref{thm:rightsocle}, there exists a set $\{R_i : i\in I\}$ of globally idempotent 0-minimal right ideals of $S$ such that $B^r$ is the 0-direct union of the $B^{R_i}$ $(i\in I).$  Then $I$ is finite, and it follows from Theorem \ref{thm:SR}(4) that each $B^{R_i}$ is a union of 0-minimal right ideals of itself.  It then clearly follows that $B^r$ is a union of finitely many 0-minimal right ideals of itself.  Hence, by Corollary \ref{cor:minrightideals}, $B^r$ is right noetherian.
\end{proof}

\begin{cor}
Let $S=S^0$ be a semigroup, and let $\Sigma^r=\Sigma^r(S).$  Then the following are equivalent:
\begin{enumerate}
\item $S$ is right noetherian;
\item $S$ has finitely many 0-minimal right ideals and $S/\Sigma^r$ is right noetherian.
\end{enumerate}
\end{cor}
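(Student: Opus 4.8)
The plan is to obtain both implications by assembling results already proved, the crucial point being that $\Sigma^r$, regarded as the subact $\Sigma^r_S$ of $S_S$, is nothing but the union of $\{0\}$ and all the $0$-simple subacts of $S_S$ (recall that the $0$-minimal right ideals of $S$ are exactly the $0$-simple subacts of $S_S$).

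For $(1)\Rightarrow(2)$ I would argue directly: if $S$ is right noetherian, then Proposition \ref{prop:0-min_right_ideal,wrn} gives that $S$ has only finitely many $0$-minimal right ideals, and Lemma \ref{lem:quotient}, applied to the ideal $\Sigma^r$, gives that the Rees quotient $S/\Sigma^r$ is right noetherian. This half is immediate and requires no real work.

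For $(2)\Rightarrow(1)$ I would invoke Corollary \ref{cor:idealext,wrn}, which reduces the task to showing that the $S$-act $\Sigma^r_S$ is noetherian (the other hypothesis of that corollary, that $S/\Sigma^r$ is right noetherian, is part of $(2)$). If $S$ has no $0$-minimal right ideals then $\Sigma^r=\{0\}$, which is trivially noetherian. Otherwise, by the hypothesis of $(2)$, $\Sigma^r_S$ is a union of finitely many $0$-simple subacts of $S_S$, and Corollary \ref{cor:simplesubacts} then yields that $\Sigma^r_S$ is noetherian; hence $S$ is right noetherian by Corollary \ref{cor:idealext,wrn}.

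I do not anticipate a genuine obstacle: the corollary is essentially a repackaging of Proposition \ref{prop:0-min_right_ideal,wrn}, Corollary \ref{cor:idealext,wrn} and Corollary \ref{cor:simplesubacts}. The only points needing care are the degenerate case $\Sigma^r=0$ and the identification of $0$-minimal right ideals with $0$-simple subacts of $S_S$, which is exactly what makes Corollary \ref{cor:simplesubacts} applicable.
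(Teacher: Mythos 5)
Your proof is correct and follows essentially the same route as the paper: both directions rest on Proposition \ref{prop:0-min_right_ideal,wrn}, Lemma \ref{lem:quotient} and Corollary \ref{cor:idealext,wrn}, with the only cosmetic difference being that you justify the noetherianity of $\Sigma^r_S$ via Corollary \ref{cor:simplesubacts}, whereas the paper simply observes that $\Sigma^r_S$ has only finitely many subacts.
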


\begin{proof}
(1)$\Rightarrow$(2) follows immediately from Proposition \ref{prop:0-min_right_ideal,wrn} and Lemma \ref{lem:quotient}.

(2)$\Rightarrow$(1).  The right socle $\Sigma^r$ contains only finitely many right ideals of $S$; equivalently, $\Sigma^r_S$ contains only finitely many subacts of $S_S.$  Thus $\Sigma^r_S$ is noetherian.  Since $S/\Sigma^r$ is right noetherian, we have that $S$ is right noetherian by Corollary \ref{cor:idealext,wrn}.
\end{proof}

\begin{cor}
Let $S=S^0$ be a semigroup without null 0-minimal ideals, and let $\Sigma^r=\Sigma^r(S).$  Then the following are equivalent:
\begin{enumerate}
\item $S$ is right noetherian;
\item $\Sigma^r$ is a union of finitely many 0-minimal right ideals of itself, and $S/\Sigma^r$ is right noetherian;
\item both $\Sigma^r$ and $S/\Sigma^r$ are right noetherian.
\end{enumerate}
\end{cor}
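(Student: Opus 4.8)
The plan is to bootstrap from the preceding corollary --- that $S$ is right noetherian if and only if $S$ has finitely many $0$-minimal right ideals and $S/\Sigma^r$ is right noetherian --- together with Corollary~\ref{cor:minrightideals} applied to the semigroup $\Sigma^r$. The hypothesis enters through the structure of the right socle: for a semigroup with no null $0$-minimal ideals one has $A^r=0$, so $\Sigma^r=B^r$, and Theorems~\ref{thm:rightsocle} and~\ref{thm:SR} then present $\Sigma^r$ as a $0$-direct union of the $0$-simple semigroups $B^{R_i}$, each of which is a union of $0$-minimal right ideals of itself. (The point is that a nonzero null part $A^r$ is an ideal of $S$ that is a null semigroup; whenever $S$ has only finitely many $0$-minimal right ideals, $A^r$ is also a union of finitely many $\ar$-classes, hence contains a minimal nonzero two-sided ideal of $S$ --- necessarily a null $0$-minimal ideal, against the hypothesis --- while if $S$ has infinitely many $0$-minimal right ideals then $S$, and so $\Sigma^r$, is not right noetherian and all of (1)--(3) fail together.)

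With $\Sigma^r=B^r$ in hand, the key step is the bookkeeping that lines up the relevant posets. Using Theorem~\ref{thm:SR}(4) inside each block $B^{R_i}$ and the $0$-direct union across blocks, I would show that a subset of $\Sigma^r$ is a right ideal of the semigroup $\Sigma^r$ precisely when it is a right ideal of $S$ contained in $\Sigma^r$, with finite generation agreeing on both sides; in particular the $0$-minimal right ideals of the semigroup $\Sigma^r$ are exactly the (globally idempotent) $0$-minimal right ideals of $S$ that lie inside $\Sigma^r$, and $\Sigma^r$ is their union. Consequently ``$S$ has finitely many $0$-minimal right ideals'' is the same statement as ``$\Sigma^r$ is a union of finitely many $0$-minimal right ideals of itself'', so the preceding corollary immediately gives $(1)\Leftrightarrow(2)$. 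For $(2)\Leftrightarrow(3)$: if $\Sigma^r$ is a union of finitely many $0$-minimal right ideals of itself then $\Sigma^r$ is right noetherian by Corollary~\ref{cor:minrightideals}; conversely, since $\Sigma^r$ is already a union of $0$-minimal right ideals of itself, Corollary~\ref{cor:minrightideals} forces that family to be finite as soon as $\Sigma^r$ is right noetherian. As the clause on $S/\Sigma^r$ is common to (2) and (3), all three statements are equivalent.

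I expect the main obstacle to be the second paragraph: verifying that the right ideals of the semigroup $\Sigma^r$, the right ideals of $S$ contained in $\Sigma^r$, and the subacts of $\Sigma^r_S$ all coincide (with matching finite generation). This looks like routine diagram-chasing, but it genuinely rests on the $0$-direct union decomposition of $\Sigma^r=B^r$ and Theorem~\ref{thm:SR}(4), and the entire reformulation breaks down if $\Sigma^r$ is permitted a nonzero null part --- which is precisely what the hypothesis ``no null $0$-minimal ideals'' excludes.
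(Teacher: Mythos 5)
Your proof is correct and rests on the same pillars as the paper's own three\nobreakdash-line argument --- the identification $\Sigma^r=B^r$, the presentation of $B^r$ as a union of $0$-minimal right ideals of itself via Theorems \ref{thm:rightsocle} and \ref{thm:SR}, and Corollary \ref{cor:minrightideals} --- but it differs in two respects. First, the paper closes the cycle with $(3)\Rightarrow(1)$ via Proposition \ref{prop:idealext,wrn}, which spares it the bookkeeping you identify as the main obstacle; your route through the preceding corollary instead needs the matching of right ideals of the semigroup $\Sigma^r$ with right ideals of $S$ contained in $\Sigma^r$, which does go through via Theorem \ref{thm:SR}(4) and the $0$-direct decomposition, but costs more work. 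Second, and more significantly, the paper disposes of the null part with the bare assertion ``Notice that $\Sigma^r=B^r$,'' whereas you prove it --- and your caution is warranted, because the hypothesis forbids null $0$-minimal \emph{two-sided} ideals and this does not by itself exclude null $0$-minimal \emph{right} ideals. For instance, take the free monogenic semigroup $\langle s\rangle$, adjoin a zero and elements $p_0,p_1,\dots$ with $s^np_i=p_{i+n}$, $p_is^n=p_i$ and $p_ip_j=0$: each $\{p_i,0\}$ is a null $0$-minimal right ideal, yet the nonzero ideals contained in $\{p_i:i\geq 0\}\cup\{0\}$ form the strictly descending chain $J_n=\{p_i:i\geq n\}\cup\{0\}$, so this semigroup has no $0$-minimal two-sided ideal at all, satisfies the hypothesis vacuously, and still has $A^r\neq 0$. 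Your dichotomy is exactly what is needed to repair this: if $S$ has finitely many $0$-minimal right ideals then $A^r$ consists of finitely many $\ar$-classes, hence finitely many $\jay$-classes, so a nonzero $A^r$ would contain a minimal nonzero ideal of $S$, necessarily a null $0$-minimal ideal, against the hypothesis; if $S$ has infinitely many, they yield infinite antichains of principal right ideals in both $S$ and $\Sigma^r$, so (1)--(3) all fail by Corollaries \ref{cor:principal} and \ref{cor:minrightideals}. In short, your argument is not merely correct: it supplies the justification that the paper's ``Notice'' actually requires.
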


\begin{proof}
(1)$\Rightarrow$(2).  We have $\Sigma^r=B^r$, so $\Sigma^r$ is a union of finitely many 0-minimal right ideals of itself by Proposition \ref{prop:0-min_right_ideal,wrn}.   By Lemma \ref{lem:quotient}, $S/\Sigma^r$ is right noetherian. 

(2)$\Rightarrow$(3) follows from Corollary \ref{cor:minrightideals}, and (3)$\Rightarrow$(1) follows from Proposition \ref{prop:idealext,wrn}.
\end{proof}

The following result is an analogue of Proposition \ref{prop:minleftideal} for 0-minimal left ideals.

\begin{prop}
\label{prop:0-min_left_ideal,wrn}
Let $S=S^0$ be a semigroup with a globally idempotent 0-minimal left ideal $L.$  If $S$ is right noetherian, then the globally idempotent part $B^L$ of $LS$ is completely 0-simple and right noetherian (and hence has finitely many $\ar$-classes).  Moreover, $L$ is right noetherian.
\end{prop}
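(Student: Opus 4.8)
The plan is to obtain the ``completely 0-simple'' part essentially for free from the ACCPR version of this statement, and then to push right noetherianity downwards from $S$ to $B^L$ and from there to $L$ using the inheritance results for regular one-sided ideals.

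First I would run the cheap reductions. Since $S$ is right noetherian it satisfies ACCPR, so Corollary \ref{cor:0-min_left_ideal} gives at once that $B^L$ is completely 0-simple. It is worth recording what the left-right dual of Theorem \ref{thm:SR} supplies here: $B^L$ is a 0-simple semigroup and a left ideal of $S$, and a subset of $B^L$ is a $(0$-minimal$)$ left ideal of $B^L$ precisely when it is such a left ideal of $S$. I would also note that, because $L$ is globally idempotent, $L = L^2 \subseteq LS$, so $L$ is one of the globally idempotent 0-minimal left ideals of $S$ contained in $LS$, and hence $L \subseteq B^L$.

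Next I would show $B^L$ is right noetherian. A completely 0-simple semigroup is regular (immediate from its Rees matrix representation over a group), so $B^L$ is a regular subsemigroup of $S$; Proposition \ref{prop:reg,rn} then yields that $B^L$ is right noetherian. For the parenthetical assertion, a completely 0-simple semigroup is the union of its 0-minimal right ideals, each of which is a single $\ar$-class together with $0$; since $B^L$ is right noetherian, Corollary \ref{cor:minrightideals} forces this union to be finite, so $B^L$ has finitely many $\ar$-classes.

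Finally I would deduce that $L$ is right noetherian. Since $L \subseteq B^L$ and $B^L$ is regular, every element of $L$ is regular in $B^L$, hence in $S$. As $L$ is a left ideal of $S$ (being a 0-minimal one) and $S$ is right noetherian, Corollary \ref{cor:leftideal,wrn} applies and gives that $L$ is right noetherian; alternatively one could apply that corollary with $B^L$ in place of $S$, using that $L$ is a left ideal of $B^L$ by the dual of Theorem \ref{thm:SR}(4). I do not anticipate a real obstacle: the argument is an assembly of Corollary \ref{cor:0-min_left_ideal}, the dual of Theorem \ref{thm:SR}, and the regular-ideal inheritance results. The one point needing care is that these inheritance results cannot be invoked naively --- left ideals of right noetherian semigroups need not be right noetherian --- so one must first secure regularity of the relevant one-sided ideals, which is exactly where complete 0-simplicity of $B^L$ enters, and one must remember the small inclusions $L = L^2 \subseteq LS$ and $L \subseteq B^L$ linking $L$ to the structure theorem.
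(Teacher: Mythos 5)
Your proposal is correct and follows essentially the same route as the paper: Corollary \ref{cor:0-min_left_ideal} for complete 0-simplicity, Proposition \ref{prop:reg,rn} via regularity for right noetherianity of $B^L$, Corollary \ref{cor:minrightideals} for the finitely many $\ar$-classes, and Corollary \ref{cor:leftideal,wrn} for $L$. The extra details you supply (the inclusion $L=L^2\subseteq LS$ giving $L\subseteq B^L$) are correct and merely make explicit what the paper leaves implicit.
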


\begin{proof}
Since $S$ satisfies ACCPR, $B^L$ is completely 0-simple by Corollary \ref{cor:0-min_left_ideal}.  Therefore, $B^L$ is right noetherian by Proposition \ref{prop:reg,rn}, and hence $B^L$ has finitely many $\ar$-classes by Corollary \ref{cor:minrightideals}.  Since $L$ is contained in $B_L$, which is regular, $L$ is right noetherian by Corollary \ref{cor:leftideal,wrn}.
\end{proof}

We now characterise the property of being right noetherian in terms of the left socle.

\begin{thm}
\label{thm:leftsocle,wrn}
Let $S=S^0$ be a semigroup, and let $\Sigma^l=\Sigma^l(S).$  Then the following are equivalent:
\begin{enumerate}
\item $S$ is right noetherian;
\item $B^l$ is either 0 or the 0-direct union of finitely many completely 0-simple semigroups that each have finitely many $\ar$-classes, the $S$-act $A^l_S$ is noetherian, and $S/\Sigma^l$ is right noetherian.
\item both $\Sigma^l$ and $S/\Sigma^l$ are right noetherian.
\end{enumerate}
\end{thm}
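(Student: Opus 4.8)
The plan is to run the cycle $(3)\Rightarrow(1)\Rightarrow(2)\Rightarrow(3)$. The first implication is immediate: the left socle $\Sigma^l$ is a two-sided ideal of $S$ (the left-right dual of the fact, recalled before Theorem \ref{thm:rightsocle}, that $\Sigma^r$ is an ideal), so if both $\Sigma^l$ and $S/\Sigma^l$ are right noetherian then so is $S$ by Proposition \ref{prop:idealext,wrn}.

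For $(1)\Rightarrow(2)$, assume $S$ is right noetherian. Then $S/\Sigma^l$ is right noetherian by Lemma \ref{lem:quotient}, and, as $A^l$ is an ideal of $S$ and hence a subact of $S_S$, the subact $A^l_S$ is noetherian by Proposition \ref{prop:acts}. Now suppose $B^l\neq 0$. The left-right dual of Theorem \ref{thm:rightsocle} yields a family $\{L_i:i\in I\}$ of globally idempotent $0$-minimal left ideals of $S$ with $B^l$ the $0$-direct union of the $0$-simple semigroups $B_i=B^{L_i}$; by Proposition \ref{prop:0-min_left_ideal,wrn} each $B_i$ is completely $0$-simple and right noetherian, hence has finitely many $\ar$-classes by Corollary \ref{cor:minrightideals}. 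To see that $I$ is finite I would pass to the semigroup $T=S/A^l$, which is right noetherian by Lemma \ref{lem:quotient}; as in the proof of Theorem \ref{thm:leftsocle,ACCPR}, the image of $B^l$ in $T$ is an ideal of $T$, hence a noetherian subact of $T_T$ by Proposition \ref{prop:acts}. Picking a $0$-minimal right ideal $R_i$ of each completely $0$-simple semigroup $B_i$, the $0$-direct decomposition together with $B_i^2=B_i$ shows each $R_i$ is a principal subact of (the image of) $B^l$, and distinct $R_i$ are incomparable; an infinite $I$ would therefore give an infinite antichain of principal subacts of a noetherian act, contradicting Theorem \ref{thm:principal}. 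Hence $I$ is finite, and $(2)$ holds.

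For $(2)\Rightarrow(3)$, the quotient $S/\Sigma^l$ is right noetherian by hypothesis, so it remains to prove that $\Sigma^l$ is right noetherian. Here $A^l$ is an ideal of $\Sigma^l$ with $\Sigma^l/A^l\cong B^l$ as semigroups, and $B^l$ — being either $0$ or a $0$-direct union of finitely many completely $0$-simple semigroups, each with finitely many $\ar$-classes — has finitely many $\ar$-classes and so is right noetherian by Corollary \ref{cor:R-classes}. By Corollary \ref{cor:idealext,wrn} applied to $\Sigma^l$ and its ideal $A^l$, it then suffices to show that the subact $(A^l)_{\Sigma^l}$ is noetherian. Since $A^l$ is null and $A^lB^l\subseteq A^l\cap B^l=0$, the semigroup $\Sigma^l$ acts trivially on $A^l$, so $(A^l)_{\Sigma^l}$ is the union of the $0$-simple subacts $\{a,0\}$ ($a\in A^l\setminus\{0\}$), which by Corollary \ref{cor:simplesubacts} is noetherian precisely when $A^l$ is finite. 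Thus $(2)\Rightarrow(3)$ comes down to showing that condition $(2)$ forces $A^l$ to be finite.

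I expect this last reduction to be the main obstacle of the proof. The approach I would take is to use that $A^l$ is an ideal of $S$ with $(A^l)^2=0$, so that the principal subacts $aS^1$ ($a\in A^l$) and the poset of $\ar_S$-classes of $A^l_S$ are determined entirely by the right action of $S$ on $A^l$; combining this with the hypothesis that $A^l_S$ is noetherian — in particular that it admits no infinite antichain of principal subacts, by Theorem \ref{thm:principal} — one aims to deduce $|A^l|<\infty$. Once this is in place the cycle closes; and since $(1)\Rightarrow(2)$ has already been shown and $(2)\Rightarrow(3)\Rightarrow(1)$, all three statements are equivalent.
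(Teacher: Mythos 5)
Your $(3)\Rightarrow(1)$ and the bulk of $(1)\Rightarrow(2)$ follow the paper. For the finiteness of $I$ the paper has a cleaner device than your detour through $T=S/A^l$: pick a non-zero idempotent $e_i$ in each $B_i$; if $e_j\in e_iS$ for $i\neq j$ then $e_j=e_ie_j\in B_iB_j=0$, a contradiction, so $\{e_iS^1:i\in I\}$ is an antichain of principal right ideals of $S$ and $I$ is finite by Corollary \ref{cor:principal}. (Your version would additionally need you to check that the chosen $0$-minimal right ideals of the $B_i$ are genuinely subacts of $T_T$, which does not follow from the left--right dual of Theorem \ref{thm:SR}.)

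The step you could not complete --- that condition $(2)$ forces $A^l$ to be finite --- is not a gap you failed to close; it is false, and your reduction exposes an error in the paper. As you observe, $A^l\Sigma^l=0$, so every subset of $A^l$ containing $0$ is a right ideal of $\Sigma^l$, and $\Sigma^l$ is right noetherian only if $A^l$ is finite. But take $U=\mathcal{U}(S,A)$ with $S$ right noetherian and $A$ an infinite noetherian $S$-act (e.g.\ the $\mathbb{Z}$-indexed act over the free commutative semigroup on two generators, as in the paper's own example preceding Theorem \ref{thm:0-minrightideal}): then $U$ is right noetherian by Proposition \ref{prop:con,wrn}, so $(1)$ and hence $(2)$ hold, yet $A^l(U)=\{x_a:a\in A\}\cup\{0\}$ is infinite and $\Sigma^l(U)$ is not right noetherian, so $(3)$ fails. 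Indeed the paper states this phenomenon explicitly (``a right noetherian semigroup can have a right/left socle that is not right noetherian'') two pages before asserting $(1)\Leftrightarrow(3)$. The paper's proof of $(2)\Rightarrow(3)$ applies Corollary \ref{cor:idealext,wrn} to the semigroup $\Sigma^l$ and its ideal $A^l$, which requires the $\Sigma^l$-act $A^l_{\Sigma^l}$ to be noetherian, but the hypothesis supplies only that the $S$-act $A^l_S$ is noetherian; since $A^l\Sigma^l=0$ these differ exactly when $A^l$ is infinite. The equivalence $(1)\Leftrightarrow(2)$ does survive: prove $(2)\Rightarrow(1)$ directly via Corollary \ref{cor:idealext,wrn} and Proposition \ref{prop:acts}, noting that $\Sigma^l_S/A^l_S$ has universe $B^l$ and finitely many $\ar_S$-classes (each a union of $\ar$-classes of the $B_i$), so Corollary \ref{cor:R_S-classes} applies; condition $(3)$ should be deleted or replaced by ``$\Sigma^l_S$ is a noetherian $S$-act and $S/\Sigma^l$ is right noetherian.''
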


\begin{proof}
(1)$\Rightarrow$(2).  By Corollary \ref{cor:idealext,wrn}, $A^l_S$ is noetherian and $S/\Sigma^l$ is right noetherian.  Suppose that $B^l\neq 0.$  Then, by the left-right dual of Theorem \ref{thm:rightsocle}, there exists a set $\{L_i : i\in I\}$ of globally idempotent 0-minimal left ideals such that $B^l$ is a 0-direct union of $B_i$ $(i\in I),$ where $B_i=B^{L_i}.$  By Proposition \ref{prop:0-min_left_ideal,wrn}, each $B_i$ is completely 0-simple and has finitely many $\ar$-classes.  For each $i\in I,$ let $e_i$ be a non-zero idempotent in $B_i.$  We cannot have $e_j\in e_iS$ for any $i\neq j,$ for that would imply that $e_ie_j=e_j,$ contradicting the fact that $B_iB_j=0.$  Thus $\{e_iS^1 : i\in I\}$ is an antichain of principal right ideals of $S,$ and hence $I$ is finite by Corollary \ref{cor:principal}.

(2)$\Rightarrow$(3).  We have that $\Sigma^l/A_l\cong B^l$ is right noetherian by Corollary \ref{cor:minrightideals}.  Therefore, since $A^l_S$ is noetherian, $\Sigma^l$ is right noetherian by Corollary \ref{cor:idealext,wrn}. 

(3)$\Rightarrow$(1) follows from Proposition \ref{prop:idealext,wrn}.
\end{proof}

\begin{cor}
Let $S=S^0$ be a semigroup without null 0-minimal ideals, and let $\Sigma^l=\Sigma^l(S).$  Then the following are equivalent:
\begin{enumerate}
\item $S$ is right noetherian;
\item $\Sigma^l$ is either 0 or the 0-direct union of finitely many completely 0-simple semigroups that each have finitely many $\ar$-classes, and $S/\Sigma^l$ satisfies ACCPR.
\end{enumerate}
\end{cor}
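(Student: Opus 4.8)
The plan is to read this off from Theorem \ref{thm:leftsocle,wrn}; the one point requiring care is the role of the hypothesis, which should be used to force the null part of the left socle to be trivial. Recall that $A^l$ is, by the left-right dual of its definition, the union of $0$ together with all the null $0$-minimal left ideals of $S$, so ruling these out gives $A^l=0$ and hence $\Sigma^l=B^l$. (I would phrase the hypothesis as ``$S$ has no null $0$-minimal one-sided ideals'', equivalently $A^r=A^l=0$; this is precisely the condition under which each one-sided socle coincides with its globally idempotent part.) Once $A^l=0$ is in place, the two requirements in Theorem \ref{thm:leftsocle,wrn}(2) that do not explicitly appear in the corollary---that the act $A^l_S$ be noetherian, and that it is $B^l$ rather than $\Sigma^l$ being described---become respectively vacuous and tautological.

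For $(1)\Rightarrow(2)$ I would simply quote Theorem \ref{thm:leftsocle,wrn}: if $S$ is right noetherian then $B^l=\Sigma^l$ is either $0$ or a $0$-direct union of finitely many completely $0$-simple semigroups each with finitely many $\ar$-classes, and $S/\Sigma^l$ is right noetherian and hence satisfies ACCPR. For $(2)\Rightarrow(1)$, the only computation needed is that a $0$-direct union $\bigcup_{i\in I}B_i$ of finitely many completely $0$-simple semigroups, each having finitely many $\ar$-classes, has only finitely many $\ar$-classes of its own: for $a\in B_i\setminus\{0\}$ one has $a\Sigma^l=aB_i$ since $aB_j\subseteq B_iB_j=0$ for $j\neq i$, so each nonzero $\ar$-class of $\Sigma^l$ coincides with an $\ar$-class of one of the $B_i$. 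Hence $\Sigma^l=B^l$ has finitely many $\ar$-classes and is right noetherian by Corollary \ref{cor:R-classes} (or by Corollary \ref{cor:minrightideals}); since $S/\Sigma^l$ is right noetherian as well, Proposition \ref{prop:idealext,wrn} then yields that $S$ is right noetherian. (Equivalently one may invoke the implication $(3)\Rightarrow(1)$ of Theorem \ref{thm:leftsocle,wrn}; the degenerate case $\Sigma^l=0$ is immediate, since then $S/\Sigma^l\cong S$.)

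I expect the only genuinely non-routine step to be pinning down the reduction $A^l=0$ from the stated hypothesis; after that the argument is a direct appeal to Theorem \ref{thm:leftsocle,wrn} together with the elementary bookkeeping of $\ar$-classes inside a $0$-direct union of completely $0$-simple semigroups.
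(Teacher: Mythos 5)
Your overall strategy is exactly the paper's (implicit) one: the corollary is stated without proof as a direct specialization of Theorem \ref{thm:leftsocle,wrn}, obtained by observing that the hypothesis forces $A^l=0$ and hence $\Sigma^l=B^l$, which makes the conditions on $A^l_S$ vacuous. Your handling of that reduction, and your bookkeeping of $\ar$-classes in a $0$-direct union, are fine (and indeed more careful than the paper, which never spells out why ``no null $0$-minimal ideals'' yields $A^l=0$; strictly one needs to read the hypothesis as excluding null $0$-minimal \emph{left} ideals, as you note).

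There is, however, one genuine gap in your $(2)\Rightarrow(1)$. Condition (2) as printed only asserts that $S/\Sigma^l$ \emph{satisfies ACCPR}, yet your argument invokes ``since $S/\Sigma^l$ is right noetherian as well'' before applying Proposition \ref{prop:idealext,wrn}. ACCPR is strictly weaker than right noetherian, and the implication $(2)\Rightarrow(1)$ is actually false with the literal reading of (2): take $S=(X^{\ast})^0$ with $|X|\geq 2$. Then $S$ has no $0$-minimal left ideals and no null $0$-minimal ideals, so $\Sigma^l=0$ and $S/\Sigma^l\cong S$ satisfies ACCPR by Proposition \ref{prop:free}, so (2) holds; but $S$ is not right noetherian. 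So the final clause of (2) must be read as ``$S/\Sigma^l$ is right noetherian'' (consistent with Theorem \ref{thm:leftsocle,wrn}(2) and with the analogous right-socle corollary, and evidently a slip in the statement carried over from the ACCPR section). Your proof silently uses the weaker reading in $(1)\Rightarrow(2)$ (``right noetherian and hence satisfies ACCPR'') and the stronger reading in $(2)\Rightarrow(1)$; you need to either point out and correct the statement, or supply an argument bridging ACCPR to right noetherian for $S/\Sigma^l$ --- and no such argument exists in general.
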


We now find several equivalent characterisations for a semigroup $S=\Sigma^l(S)$ to be right noetherian.

\begin{thm}
\label{thm:S=leftsocle}
Let $S=S^0$ be a semigroup such that $S=\Sigma^l=\Sigma^l(S),$ and let $\Sigma^r=\Sigma^r(S).$  Then the following are equivalent: 
\begin{enumerate}
\item $S$ is right noetherian;
\item $A^l$ is finite, and $B^l$ is either 0 or the 0-direct union of finitely many completely 0-simple semigroups that each have finitely many $\ar$-classes;
\item $\Sigma^r$ is either a finite null semigroup or the 0-direct union of a finite null semigroup and finitely many completely 0-simple semigroups that each have finitely many $\ar$-classes, and either $\Sigma^r=S$ or $S/\Sigma^r$ is the 0-direct union of finitely many completely 0-simple semigroups that each have finitely many $\ar$-classes;
\item $S$ has finitely many $\ar$-classes.
\end{enumerate}
\end{thm}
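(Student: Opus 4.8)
The plan is to prove the equivalences via $(1)\Rightarrow(2)\Rightarrow(3)\Rightarrow(1)$ together with $(2)\Rightarrow(4)\Rightarrow(1)$, so that the only ``free'' implication is $(4)\Rightarrow(1)$, which is immediate from Corollary \ref{cor:R-classes}. Throughout, the hypothesis $S=\Sigma^l$ lets me invoke Lemma \ref{lem:S=left_socle}, which provides the bridge between the two socles: $A^r=A^l$ (with $A^lS=0$, from its proof), $B^r\subseteq B^l$ with $B^r$ an ideal of $S$, $\Sigma^r$ the $0$-direct union of $A^r$ and $B^r$, and $S/\Sigma^r\cong B^l/B^r$.

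For $(1)\Rightarrow(2)$: since $S=\Sigma^l$, the Rees quotient $S/\Sigma^l$ is the trivial semigroup with zero, hence right noetherian, so Theorem \ref{thm:leftsocle,wrn} gives that $B^l$ has the stated form and that the $S$-act $A^l_S$ is noetherian. By Lemma \ref{lem:S=left_socle}(1), $A^l_S$ is a union of the $0$-simple subacts $\{a,0\}$ ($a\in A^l\setminus\{0\}$), which form an antichain, so by Corollary \ref{cor:simplesubacts} this happens if and only if $A^l$ is finite. For $(2)\Rightarrow(3)$: writing $B^l=\bigsqcup_{i\in I}B_i$ as in (2), the fact that $B^r$ is an ideal of $S$ contained in $B^l$ (hence an ideal of $B^l$) together with the $0$-simplicity of the $B_i$ forces $B^r=\bigsqcup_{j\in J}B_j$ for $J=\{i\in I:B_i\cap B^r\neq0\}$; thus $\Sigma^r$ has the required form, and $S/\Sigma^r\cong B^l/B^r\cong\bigsqcup_{i\in I\setminus J}B_i$, which is $0$ precisely when $\Sigma^r=S$ and otherwise is of the stated form. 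For $(3)\Rightarrow(1)$: a finite semigroup is trivially right noetherian and a completely $0$-simple semigroup with finitely many $\ar$-classes is right noetherian by Corollary \ref{cor:minrightideals}; a $0$-direct union of finitely many right noetherian semigroups is right noetherian (its right ideals are exactly the componentwise unions), so (3) makes $\Sigma^r$ right noetherian, hence $\Sigma^r_S$ noetherian (its subacts form a subposet of the right ideals of $\Sigma^r$), and $S/\Sigma^r$ right noetherian; thus $S$ is right noetherian by Corollary \ref{cor:idealext,wrn}.

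For $(2)\Rightarrow(4)$: using Lemma \ref{lem:S=left_socle} and (2) exactly as in $(2)\Rightarrow(3)$, we have $\Sigma^r$ the $0$-direct union of the finite $A^r=A^l$ and the ideal $B^r=\bigsqcup_{j\in J}B_j$, each $B_j$ completely $0$-simple with finitely many $\ar$-classes, and $S/\Sigma^r$ a $0$-direct union of finitely many such semigroups. I then count the $\ar$-classes of $S$. Since $\Sigma^r$, $A^r$, $B^r$ and each $B_j$ are right ideals of $S$ (the $B_j$ because they are $0$-direct summands of the ideal $B^r$), every $\ar$-class of $S$ lies entirely inside $A^r$, inside a single $B_j$, or inside $S\setminus\Sigma^r$. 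Inside $A^r$: since $A^rS=0$, each nonzero element is its own $\ar$-class, and $A^r$ is finite. Inside $B_j$: as $B_j$ is completely $0$-simple it is regular, so writing $a=aya$ with $y\in B_j$ gives $as=ay(as)\in aB_j$ for every $s\in S$ (using $as\in B_j$), whence $aS^1=a(B_j)^1$; thus the $\ar$-classes of $S$ contained in $B_j$ are precisely those of the semigroup $B_j$, of which there are finitely many, and there are finitely many $B_j$. Inside $S\setminus\Sigma^r$: since $\Sigma^r$ is an ideal, these correspond bijectively to the nonzero $\ar$-classes of $S/\Sigma^r$, again finitely many. Hence $S$ has finitely many $\ar$-classes, which is $(4)$.

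I expect the main obstacle to be the $\ar$-class count in $(2)\Rightarrow(4)$, and within it the verification that on each completely $0$-simple block $B_j$ Green's relation $\ar$ of $S$ restricts to the intrinsic $\ar$ of $B_j$: this is where regularity is essential (it can fail for a $0$-minimal right ideal that is not regular, cf.\ the remark following Corollary \ref{cor:comp_0-simple}), and it must be combined with the care needed to keep the various $0$-direct summands separated as $\ar_S$-saturated pieces. The remaining implications are routine applications of Theorem \ref{thm:leftsocle,wrn}, Lemma \ref{lem:S=left_socle}, and the corollaries on $0$-direct unions and on ideal extensions.
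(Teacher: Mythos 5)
Your proof is correct and rests on the same pillars as the paper's: Theorem \ref{thm:leftsocle,wrn} for (1)$\Rightarrow$(2), Lemma \ref{lem:S=left_socle} for translating the left-socle decomposition into the right-socle one in (2)$\Rightarrow$(3), and Corollary \ref{cor:R-classes} to return to (1). The difference lies in how (4) is tied into the cycle: the paper closes the loop with (3)$\Rightarrow$(4), which it declares obvious, and then (4)$\Rightarrow$(1); you instead prove (3)$\Rightarrow$(1) via Corollary \ref{cor:idealext,wrn} and separately (2)$\Rightarrow$(4) by an explicit count of the $\ar$-classes of $S$. That count supplies precisely the detail the paper suppresses, namely that on each completely 0-simple block $B_j$ the relation $\ar$ of $S$ restricts to the intrinsic $\ar$ of $B_j$, which you correctly trace back to regularity; this is a worthwhile elaboration rather than a genuinely different method. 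One presentational wrinkle: you justify that each $B_j$ is a right ideal of $S$ solely by its being a $0$-direct summand of the ideal $B^r$, which is not a valid inference in general (a $0$-direct summand of a right ideal need not be a right ideal), and you then use $as\in B_j$ inside the regularity computation, which as written is circular. The repair is one line: for $a\in B_j$ and $s\in S$ we have $as\in B^r,$ so $as\in B_k$ for some $k$; writing $a=aya$ with $y\in B_j$ gives $as=a\bigl(y(as)\bigr)$ with $y(as)\in B_jB_k,$ and $B_jB_k=0$ for $k\neq j,$ so $as\in B_j$ in every case. With that adjustment, and the analogous observation for the blocks of $S/\Sigma^r,$ your argument is complete.
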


\begin{proof}
(1)$\Rightarrow$(2).  Given Theorem \ref{thm:leftsocle,wrn}, we only need to prove that $A^l$ is finite.  By Lemma \ref{lem:S=left_socle}, either $A^l\neq 0$ or $\{a, 0\}$ is a 0-minimal right ideal of $\Sigma^l$ for each $a\in A^l\!\setminus\!\{0\}.$  Since $\Sigma^l$ is right noetherian, it has only finitely many 0-minimal right ideals by Proposition \ref{prop:0-min_right_ideal,wrn}, so $A^l$ is finite.

The proof of (2)$\Rightarrow$(3) is essentially the same as that of (2)$\Rightarrow$(3) of Theorem \ref{thm:S=left_socle}.  (3)$\Rightarrow$(4) is obvious, and (4)$\Rightarrow$(1) follows from Corollary \ref{cor:R-classes}.
\end{proof}

\begin{cor}
Let $S=S^0$ be a semigroup such that $S=\Sigma^l(S)=\Sigma^r(S).$  Then $S$ is right noetherian if and only if it is either a finite null semigroup or the 0-direct union of a finite null semigroup and finitely many completely 0-simple semigroups that each have finitely many $\ar$-classes.
\end{cor}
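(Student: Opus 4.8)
The plan is to read this off directly from Theorem~\ref{thm:S=leftsocle}, specialised to the subcase in which moreover $S=\Sigma^r(S)$.  First I would check that the hypotheses of Theorem~\ref{thm:S=leftsocle} are in force: we are given a semigroup $S=S^0$ with $S=\Sigma^l=\Sigma^l(S)$, and $\Sigma^r=\Sigma^r(S)$ is its right socle.  Hence conditions (1)--(4) of that theorem are all equivalent for $S$; in particular $S$ is right noetherian if and only if its condition~(3) holds.

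Next I would substitute the extra hypothesis $\Sigma^r=\Sigma^r(S)=S$ into condition~(3).  That condition is a conjunction: first, that $\Sigma^r$ is either a finite null semigroup or the 0-direct union of a finite null semigroup and finitely many completely 0-simple semigroups that each have finitely many $\ar$-classes; and second, that either $\Sigma^r=S$ or $S/\Sigma^r$ is the 0-direct union of finitely many completely 0-simple semigroups that each have finitely many $\ar$-classes.  With $\Sigma^r=S$ the second conjunct is satisfied by its first alternative, so condition~(3) collapses to its first conjunct, which --- writing $S$ for $\Sigma^r$ --- asserts exactly that $S$ is either a finite null semigroup or the 0-direct union of a finite null semigroup and finitely many completely 0-simple semigroups that each have finitely many $\ar$-classes.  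Since this is precisely the right-hand side of the corollary, the equivalence (1)$\Leftrightarrow$(3) of Theorem~\ref{thm:S=leftsocle} completes the proof.

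There is no real obstacle here: no fresh structural or combinatorial input is needed, all of the substance having already been absorbed into Theorem~\ref{thm:S=leftsocle} (and thence into Theorem~\ref{thm:leftsocle,wrn}, Lemma~\ref{lem:S=left_socle} and Corollary~\ref{cor:R-classes}).  If one wished to avoid quoting condition~(3), one could equally well run the argument through condition~(2): the hypothesis $S=\Sigma^r$ together with Lemma~\ref{lem:S=left_socle} forces $B^l=B^r$ and exhibits $S$ as the 0-direct union of $A^l$ and $B^l$, so condition~(2) --- that $A^l$ is finite and $B^l$ is $0$ or a 0-direct union of finitely many completely 0-simple semigroups with finitely many $\ar$-classes each --- translates directly into the claimed description of $S$.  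Either way the corollary is immediate.
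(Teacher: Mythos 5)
Your argument is correct and matches the paper's intent exactly: the corollary is stated without proof precisely because, as you observe, it is the specialisation of condition (3) of Theorem \ref{thm:S=leftsocle} (equivalently condition (2), via Lemma \ref{lem:S=left_socle}) to the case $\Sigma^r=S$, where the second conjunct of (3) holds vacuously and the first conjunct becomes the stated description of $S$. No further input is needed.
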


We now present an example to illustrate Theorem \ref{thm:S=leftsocle}, and to demonstrate that a right noetherian semigroup can be the union, but not 0-direct union, of its 0-minimal left ideals. 

\begin{ex}
Let $V$ be the 0-disjoint union of two completely 0-simple semigroups $S$ and $T,$ each with finitely many $\ar$-classes, and let $x$ be an element disjoint from $V.$  Let $U=V\cup\{x\},$ and define a multiplication on $U,$ extending that on $V,$ as follows:
$$sx=x\,\text{ and }\,xv=x^2=tx=0x=0$$
for all $s\in S,$ $t\in T$ and $v\in V.$  It is straightforward to show that $U$ is a semigroup under this multiplication.  It is easy to see that the 0-minimal left ideals of $U$ are $\{x, 0\}$ and the 0-minimal left ideals of $S$ and $T.$  Thus $U=\Sigma^l(U),$ where $A^l=\{x, 0\}$ and $B^l=V,$ and $U$ is right noetherian by Theorem \ref{thm:S=leftsocle}.  The 0-minimal right ideals of $U$ are $\{x, 0\}$ and the 0-minimal right ideals of $T,$ and $\Sigma^r$ is the 0-direct union of $\{x, 0\}$ and $T.$  On the other hand, $U$ is not the 0-direct union of its 0-minimal left ideals (since $sx=x$ for all $s\in S$).
\end{ex}

\begin{rem}
Let $S$ be any right simple semigroup, and let $U=\mathcal{U}(S, S_S).$  Then $S$ is right noetherian by Proposition \ref{prop:con,wrn}.  It is easy to see that $U$ is the union of its two 0-minimal right ideals, $I=\{x_s : s\in S\}\cup\{0\}$ and $S^0.$  Thus $U=\Sigma^r(U),$ where $I=A^r$ and $S^0=B^r.$  Since $x_st=x_{st}$ for all $s, t\in S,$ the semigroup $U$ is not the 0-direct union of $I$ and $S^0.$
\end{rem}

\section*{Acknowledgments}
This work was supported by the Engineering and Physical Sciences Research Council [EP/V002953/1].  The author thanks the referee for a number of helpful suggestions.


\begin{thebibliography}{99}
\bibitem{Aubert:1964}
K. Aubert.  On the ideal theory of commutative semi-groups.  \textit{Math. Scandinavica}, 1:39-54, 1953.
\bibitem{Book:1993}
R. Book and F. Otto.  \textit{String Rewriting Systems}.  Springer-Verlag New York, 1993.
\bibitem{Clifford:1948}
A. Clifford.  Semigroups containing minimal ideals.  \textit{Amer. J. Math.}, 70:521-526, 1948.
\bibitem{Clifford:1961}
A. Clifford and G. Preston.  \textit{The Algebraic Theory of Semigroups: Volume 1}.  American Math. Soc., 1961.
\bibitem{Clifford:1967}
A. Clifford and G. Preston.  \textit{The Algebraic Theory of Semigroups: Volume 2}.  American Math. Soc., 1967.
\bibitem{Davvaz:2020}
Bijan Davvaz and Zahra Nazemian.  Chain conditions on commutative monoids.  \textit{Semigroup Forum}, 100:732-742, 2020.
\bibitem{Good:1952}
R. Good and D. Hughes.  Associated groups for a semigroup.  \textit{Bull. Amer. Math. Soc.}, 58:624-625, 1952.
\bibitem{Hotzel:1975}
E. Hotzel.  On semigroups with maximal conditions.  \textit{Semigroup Forum}, 11:337-362, 1975.
\bibitem{Jespers:1999}
E. Jespers and J. Okni{\'n}ski.  Noetherian semigroup algebras.  \textit{J. Algebra}, 218:543-564, 1999.
\bibitem{Kilp:2000}
M. Kilp, U. Knauer and A. Mikhalev.  \textit{Monoids, Acts and Categories}.  Walter de Gruyter, 2000.
\bibitem{Kozhukhov:1980}
I. Kozhukhov.  On semigroups with minimal or maximal condition on left congruences.  \textit{Semigroup Forum}, 21:337-350, 1980.
\bibitem{Kozhukhov:2003}
I. Kozhukhov.  Semigroups with certain conditions on congruences.  \textit{J. Math. Sciences}, 114:1119-1126, 2003.
\bibitem{Lang}
S. Lang.  \textit{Algebra}. Springer-Verlag New York, 2002.
\bibitem{Lajos:1963}
S. Lajos.  Notes on ($m, n$)-ideals. I.  \textit{Proc. Japan Acad.}, 39:419-421, 1963.
\bibitem{Liu:2004}
Z. Liu.  The ascending chain condition for principal ideals of rings of generalized power series.  \textit{Comm. Algebra}, 32:3305-3314, 2004.
\bibitem{Mazurek:2009}
R. Mazurek and M. Ziembowski.  The ascending chain condition for principal left or right ideals of skew generalized power series rings.  \textit{J. Algebra}, 322:983-994, 2009.
\bibitem{Miller:2020}
C. Miller and N. Ru{\v s}kuc.  Right noetherian semigroups.  \textit{Internat. J. Algebra Comput.}, 30:13-48, 2020.
\bibitem{Miller:2021} 
C. Miller.  Semigroups whose right ideals are finitely generated.  \textit{Proc. Edinburgh Math. Soc.} 64:848-883, 2021.
\bibitem{Noether:1921}
E. Noether.  Idealtheorie in Ringbereichen.  \textit{Math. Annalen}, 83:24-66, 1921.
\bibitem{Sat:1972}
M. Satyanarayana.  Semigroups with ascending chain condition.  \textit{J. London Math. Soc.}, 2-5:11-14, 1972.
\bibitem{Stopar:2012}
N. Stopar.  Ascending chain conditions on principal left and right ideals for semidirect products of semigroups.  \textit{Semigroup Forum}, 85:322-336, 2012.
\end{thebibliography}
\end{document}